\numberwithin{equation}{section}
\theoremstyle{definition}
\newtheorem{theorem}{Theorem}[section]
\newtheorem{definition}[theorem]{Definition}
\newtheorem{lemma}[theorem]{Lemma}
\newtheorem{question}[theorem]{Question}
\newtheorem{proposition}[theorem]{Proposition}
\newtheorem{example}[theorem]{Example}
\newtheorem{construction}[theorem]{Construction}
\newtheorem{remark}[theorem]{Remark}
\newtheorem{expectation}[theorem]{Expectation}
\newcommand{\Supp}{\operatorname{Supp} }
\newcommand{\GK}{\operatorname{GKdim}}
\newcommand{\id}{\operatorname{id}}
\newcommand{\NN}{\mathbb{N}}
\newcommand{\bfone}{\mathbf{1}}
\newcommand{\ZZ}{\mathbb{Z}}
\newcommand{\FF}{\mathbb{F}}
\newcommand{\fm}{\mathfrak{m}}
\newcommand{\FFF}{\mathcal{F}}
\newcommand{\GG}{\mathcal{G}}
\newcommand{\II}{\mathcal{I}}
\newcommand{\PP}{\mathcal{P}}
\newcommand{\QQ}{\mathcal{Q}}
\newcommand{\SO}{\mathcal{S}}
\newcommand{\SG}{\mathbb{S}}
\newcommand{\VV}{\mathcal{V}}
\newcommand{\VSG}{\VV\mathbb{S}}
\newcommand{\WW}{\mathcal{W}}
\newcommand{\XX}{\mathcal{X}}
\newcommand{\x}{\textsf{x}}
\newcommand{\bfk}{\textbf{k}}
\newcommand{\TM}{\operatorname{TM}(\mathcal{X})}
\newcommand{\Tr}{\operatorname{Tr}}
\newcommand{\TX}{\mathcal{T}(\mathcal X)}
\newcommand{\gb}{Gr\"obner basis}
\newcommand{\gsb}{Gr\"obner-Shirshov basis}
\newcommand{\gsbs}{Gr\"obner-Shirshov bases}
\newcommand{\lm}{\overline}
\newcommand{\lt}{\operatorname{lt}}
\newcommand{\lc}{\operatorname{lc}}
\newcommand{\Irr}{\operatorname{Irr}}
\newcommand{\msub}{\operatorname{MaxSub}}
\newcommand{\wt}{\operatorname{wt}}
\newcommand{\h}{\operatorname{h}}
\newcommand{\bran}{\operatorname{bran}}
\newcommand{\Int}{\operatorname{Int}}
\newcommand{\Leaves}{\operatorname{Leaves}}
\newcommand{\Root}{\operatorname{Root}}
\newcommand{\Parent}{\operatorname{Parent}}
\newcommand{\Child}{\operatorname{Child}}
\newcommand{\Ar}{\operatorname{Ar}}
\newcommand{\TIV}{\operatorname{TIV}}
\newcommand{\Twig}{\msub}
\newcommand{\Path}{\operatorname{Path}}
\newcommand{\Piv}{\operatorname{Pivot}}
\renewcommand{\Vert}{\operatorname{Vert}}
\title{Growth of nonsymmetric operads}
\author[Qi]{Zihao Qi}
\address{Department of Mathematics,
Fudan University, Shanghai 200433, China}
\email{qizihao@foxmail.com}
\author[Xu]{Yongjun Xu}
\address{School of Mathematical Sciences, Qufu Normal University,
Qufu 273165, China}
\email{yjxu2002@163.com}
\author[Zhang]{James J. Zhang}
\address{Department of Mathematics, Box 354350,
University of Washington, Seattle, Washington 98195, USA}
\email{zhang@math.washington.edu}
\author[Zhao]{Xiangui Zhao}
\address{School of Mathematics and Statistics,
Huizhou University, Huizhou, Guangdong 516007, China}
\email{zhaoxg@hzu.edu.cn}
\subjclass[2020]{18M70, 16P90, 16Z10, 17A61, 13P10, 18M65.}
\keywords{nonsymmetric operad, Gelfand-Kirillov dimension, generating series,
Gr\"obner basis, Gr\"obner-Shirshov basis}
\begin{document}

\begin{abstract}
The paper concerns the Gelfand-Kirillov dimension and the generating
series of nonsymmetric operads. An analogue of Bergman's gap theorem
is proved, namely, no finitely generated locally finite nonsymmetric
operad has GK-dimension strictly between $1$ and $2$. For every
$r\in \{0\}\cup \{1\}\cup [2,\infty)$ or $r=\infty$, we construct
a single-element generated nonsymmetric operad $\PP$ such that
$\GK(\PP)=r$. We also provide counterexamples to two expectations of
Khoroshkin and Piontkovski about the generating series of operads.
\end{abstract}

\maketitle

%\tableofcontents

\setcounter{section}{-1}

\section{Introduction}
\label{xxsec0}

Let $\FF$ be a base field. An algebra stands for a unital associative
algebra over $\FF$ unless otherwise stated. The Gelfand-Kirillov
dimension (GK-dimension for short) of an algebra $A$ is defined to be
\[
\GK(A):=\sup_V\; \{ \limsup_{n\to\infty} \;
\log_n [\dim_{\FF}(\sum_{i=0}^{n}V^i)]\}
\]
where the supremum is taken over all finite dimensional subspaces $V$
of $A$. Similarly, one can define the GK-dimension of nonassociative
algebras. GK-dimension is a standard and powerful invariant for
investigating associative and nonassociative algebras. We refer to
\cite{KL00} for more background and properties of the GK-dimension of
algebras and modules.

The range of possible values for the GK-dimension of an algebra is
\begin{equation}
\label{E0.0.1}\tag{E0.0.1}
R_{\GK}:=\{0\}\cup\{1\}\cup[2,\infty) \cup \{\infty\}.
\end{equation}
The gap between $0$ and $1$ follows easily from the definition of
GK-dimension. The existence problem of algebras $A$ with $1<\GK(A)<2$
was open for some years until Bergman \cite[Theorem 2.5]{KL00}
proved that no such algebras exist. Bergman's gap theorem is also
valid for some other classes of algebras, for example, Jordan algebras
\cite{MZ96} and dialgebras \cite{zhang2019no}. However, there exist
Lie algebras \cite{Pet97} and Jordan superalgebras \cite{PS19} with
GK-dimension strictly between $1$ and $2$.

The notion of an operad was first introduced by Boardman-Vogt
\cite{BV73} and May \cite{May72} in late 1960s and early 1970s in
the study of iterated loop spaces. Since 1990s, due to
Ginzburg-Kapranov's Koszul duality theory of operads \cite{GK94},
Kontsevich's \cite{Kon03} and Tamarkin's \cite{Tam98} operadic
approach to the formality theorem, as well as Getzler's study on
topological field theories \cite{Get94, Get95}, operad theory has
become an important tool in homological algebra, category theory,
algebraic geometry and mathematical physics.

In this paper, we investigate the GK-dimension and the generating
series of nonsymmetric operads. The GK-dimension of locally finite
operads was defined in \cite[p. 400]{KP15} and
\cite[Definition 4.1]{BYZ20}. Let $\PP$ be a \emph{locally finite}
operad, i.e., an operad $\PP$ with each $\PP(i)$ finite dimensional
over the base field $\FF$. The \emph{GK-dimension} of $\PP$ is
defined to be
\[
\GK(\PP):=\limsup_{n\to\infty} \;
\log_n\left(\sum_{i=0}^n\dim_{\FF} \PP(i)\right).
\]
Our main result is an analogue of Bergman's gap theorem:

\begin{theorem}
\label{xxthm0.1}
No finitely generated locally finite nonsymmetric operad
has GK-dimension strictly between $1$ and $2$.
\end{theorem}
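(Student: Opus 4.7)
The plan is to mimic Bergman's original gap argument for associative algebras, transposing it to the tree-combinatorial setting of nonsymmetric operads. I begin by reducing to the monomial case. Fix a finite generating set $X$ of $\PP$ and a monomial order on the planar tree monomials of the free nonsymmetric operad $T(X)$ that is compatible with operadic composition. Let $I$ denote the kernel of the natural surjection $T(X) \twoheadrightarrow \PP$, and let $\PP^{\mathrm{mon}}$ be the monomial operad obtained by quotienting $T(X)$ by the operadic ideal generated by the leading monomials of $I$. Gr\"obner-Shirshov basis theory for nonsymmetric operads (in the style of Dotsenko-Khoroshkin) yields a direct sum decomposition $T(X) = I \oplus \mathrm{span}(\Irr)$, where $\Irr$ is the set of normal tree monomials (those not divisible by any leading monomial of an element of $I$). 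Consequently $\PP^{\mathrm{mon}}$ has the same Hilbert series as $\PP$, hence the same GK-dimension, and it suffices to establish the gap for finitely generated monomial nonsymmetric operads.

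In the monomial setting, a basis of $\PP(n)$ is given by the normal planar tree monomials with $n$ leaves. The combinatorial heart of the argument is the following dichotomy, which I would formulate precisely and then prove: either the sequence $\dim \PP(n)$ is uniformly bounded (whence $\GK(\PP) \le 1$), or else there exists a fixed normal "skeleton" $S$ with a distinguished leaf, chosen so that grafting any normal subtree at this leaf preserves normality, together with an unbounded family of normal subtrees available for grafting. The Bergman-style construction would then be: for each $n$ and each pair $(k,\ell)$ with $k+\ell \le n$, graft a normal subtree of arity $k$ into the free leaf of $S$ and then further graft a normal subtree of arity $\ell$ onto a leaf of the first graft. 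This produces on the order of $n^2$ distinct normal trees of arity at most $n + |S|$, forcing $\GK(\PP) \ge 2$ and contradicting the assumption $\GK(\PP) < 2$.

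The main obstacle I foresee is locating the "free leaf" at which arbitrary grafting preserves normality. In the associative word case, concatenation preserves the avoidance of forbidden subwords as long as the join point is safe, and one contends only with the linear structure of a word; in the operadic case, grafting occurs at leaves of arbitrary depth and forbidden patterns are subtrees that may straddle the graft. Overcoming this requires a careful structural analysis of normal trees when $\dim \PP(n)$ is unbounded, perhaps via an auxiliary induction on the maximal arity of the generators in $X$, or by exploiting the fact that every leading monomial of a \emph{reduced} Gr\"obner-Shirshov basis contributed by a fixed generator of $I$ has bounded depth, so that leaves sufficiently far below any branching point of the skeleton are automatically free. Once such a free leaf has been identified in some normal tree of sufficient arity, the pigeonhole/grafting argument of the previous paragraph yields the required quadratic growth and completes the proof.
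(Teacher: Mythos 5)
Your reduction to the monomial case via Gr\"obner--Shirshov theory is correct and matches the paper (Section~\ref{xxsec2} and Lemma~\ref{xxlem3.9}). Beyond that, however, the proposal has a genuine gap, and it is not a small one.

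The heart of your argument is the dichotomy: \emph{either} $\dim\PP(n)$ is bounded, \emph{or} there is a fixed normal skeleton $S$ with a ``free leaf'' at which arbitrary normal subtrees can be grafted, in which case two nested graftings produce $\Theta(n^2)$ normal monomials and $\GK(\PP)\ge 2$. You yourself flag the existence of such a free leaf as ``the main obstacle'' and offer only speculative routes around it. That obstacle \emph{is} the theorem: asserting that unbounded growth forces a universally-graftable leaf is essentially asserting a strong structure theorem for normal tree monomials, and the proposal gives no mechanism to prove it. One of your suggested routes (``every leading monomial of a reduced \gsb\ contributed by a fixed generator of $I$ has bounded depth'') fails because a reduced \gsb\ of a finitely generated operad (or algebra) need not be finite, so leading monomials can have unbounded depth; the intended safety margin below the skeleton does not exist. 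It is also worth noting that even in the associative case Bergman's proof does not work by locating a position at which arbitrary normal letters may be inserted; it proceeds in the contrapositive direction, showing that $\GK<2$ forces the number of normal words of each length to be eventually bounded by a constant, via a periodicity argument (\cite[Lemmas 2.3--2.4]{KL00}).

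The paper follows that same contrapositive route and must additionally handle the failure of tree monomials to be words, which you correctly identify but do not resolve. Concretely: Lemma~\ref{xxlem4.5} is the operadic analogue of Bergman's combinatorial lemma, but it applies only to \emph{single-branched} tree monomials. Lemmas~\ref{xxlem4.8} and~\ref{xxlem4.9} are the new ingredients: assuming $\GK(\PP)<2$, they show that for any normal tree monomial, the off-pivot-branch subtrees have uniformly bounded height and can only hang from vertices near the root or near the top of the pivot branch. Lemma~\ref{xxlem4.10} then cuts each normal tree monomial $T$ into $T_1,T_2,T_3$, with $T_2$ a long single-branched middle and $T_1,T_3$ ranging over a finite set; Lemma~\ref{xxlem4.5} bounds the possibilities for $T_2$, giving a uniform bound on $\dim\PP(n)$ and hence $\GK(\PP)\le 1$ (Theorem~\ref{xxthm4.11}). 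At no point does the paper (or Bergman) construct quadratically many monomials. To make your route work you would need, at minimum, an operadic Morse--Hedlund-type structure theorem to certify the free leaf, which would be a substantial new result rather than a step to be filled in.
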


For the definition of a finitely generated operad, see Definition
\ref{xxdef1.16}. Bergman's gap theorem for associative
algebras was proved by counting specific words that satisfy a set
of conditions and form a monomial basis of the algebra under
consideration, equivalently, by counting a set of
\emph{single-branched tree monomials} (defined in Section
\ref{xxsec4}) where the algebra is interpreted as an operad.
This method cannot be extended directly to prove the gap
theorem for nonsymmetric operads
since the tree monomials we want to count are not necessarily
single-branched. To overcome the above difficulty, we divide the
underlying tree into three subtrees such that the ``big'' subtree
is single-branched and thus we can count its tree monomials
similarly as for the case of associative algebras. The other two
subtrees are both ``small'' such that their tree monomials are well
controlled.

Note that if we drop the condition ``finitely generated'' in
Theorem \ref{xxthm0.1}, then the statement does not hold. In
fact, as Example \ref{xxex3.4} shows, any positive real number
is the GK-dimension of some nonsymmetric operad.

Theorem \ref{xxthm0.1} serves as an essential ingredient of the
next result. Recall that $R_{\GK}$ is defined in \eqref{E0.0.1}.

\begin{theorem}
\label{xxthm0.2}
\begin{enumerate}
\item[(1)]
If $\PP$ is a finitely generated locally finite nonsymmetric
operad, then $\GK(\PP)\in R_{\GK}$.
\item[(2)]
If $r\in R_{\GK}$, then there is a single-element generated
single-branched locally finite nonsymmetric operad $\PP$
such that $\GK(\PP)=r$.
\end{enumerate}
\end{theorem}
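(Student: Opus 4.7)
The plan is to deduce (1) from Theorem \ref{xxthm0.1} combined with the $(0,1)$-gap for finitely generated locally finite nonsymmetric operads, and to prove (2) by exhibiting a single-element generated single-branched operad $\PP_r$ realizing each $r\in R_{\GK}$. For the $(0,1)$-gap, take a finite-dimensional generating subspace $V\subseteq\bigoplus_{i\ge 0}\PP(i)$ with all generators of arity at most $K$. If $\dim_\FF\PP=\infty$, then a count of tree monomials built from $V$ shows that the arities in which $\PP$ is nonzero form a set of positive lower density, forcing $\sum_{i\le n}\dim\PP(i)\ge cn$ for some $c>0$ and hence $\GK(\PP)\ge 1$. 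Together with Theorem \ref{xxthm0.1} this gives $\GK(\PP)\in R_{\GK}$, establishing (1).

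For (2), the degenerate cases are immediate. For $r=0$, take any nontrivial finite-dimensional quotient of the free operad on a single binary generator, for instance the one in which every tree monomial of arity $\ge 3$ is set to zero. For $r=1$, the nonsymmetric associative operad $\operatorname{Ass}$ is single-element generated, has $\dim\operatorname{Ass}(n)=1$ for $n\ge 1$, and admits a left-comb basis whose tree monomials are manifestly single-branched.

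The main case is $r\in[2,\infty)\cup\{\infty\}$, and this is the principal obstacle. Following the spirit of Example \ref{xxex3.4}, which realizes every positive real number as the GK-dimension of some nonsymmetric operad, the strategy is to construct $\PP_r$ as a quotient of the free operad on one binary generator by a monomial ideal chosen so that the resulting irreducible tree monomials are all single-branched and exhibit the prescribed growth rate. A single-branched tree monomial decomposes into a distinguished branch equipped with a sequence of ``side subtrees'' hanging off that branch; by restricting to a finite alphabet of admissible side subtrees one reduces the enumeration of such monomials to counting words in a free monoid, where every polynomial growth rate $\ge 2$ as well as exponential growth are known to be attainable by monomial subsets. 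Transferring a chosen monomial set back into the operadic world yields a defining \gsb, and a direct generating-series computation analogous to the one in Example \ref{xxex3.4} then gives $\GK(\PP_r)=r$. The delicate step is ensuring that no unintended compositions force additional relations, which is precisely what the single-branched condition on the irreducible monomials rules out.
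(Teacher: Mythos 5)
Your proposal is correct and follows essentially the same route as the paper: part (1) is deduced from Theorem~\ref{xxthm0.1} together with a $(0,1)$-gap, and part (2) is obtained by encoding a finitely generated monomial algebra of the prescribed GK-dimension as a single-generated operad whose irreducible monomials are single-branched (i.e.\ right-normal) tree monomials, reducing the count to words in a free monoid. This is precisely the mechanism of the paper's operadization (Construction~\ref{xxcon5.1}), except that the paper takes one generator of arity $d=|X_0|$ and kills all non-right-normal compositions via the relations $(a\circ_j a)\circ_i a$ for $i<j$, while you fix a binary generator, which implicitly requires first reducing to a two-generated monomial algebra.

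Two minor points. First, for the $(0,1)$-gap your ``positive lower density of nonzero arities'' phrasing is a detour: the cleaner argument (used in Proposition~\ref{xxpro3.8}) is that $\dim\PP=\infty$ forces $\VV^{m+1}\supsetneq\VV^m$ for every $m$, so $\dim\VV^m\geq m+1$, and combined with the arity bound $\Ar\leq m(K-1)+1$ this gives the required linear lower bound on $\sum_{i\leq n}\dim\PP(i)$ directly, without any density claim. Second, in a single-branched tree with a binary generator the ``side subtrees'' hanging off the distinguished branch are necessarily single leaves; the datum carrying the letter of the word is the grafting index $i_j\in\{1,2\}$, not a nontrivial side subtree. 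The conclusion is the same, but the description as written does not quite match the combinatorics. The paper also handles all $r\in R_{\GK}$ uniformly via Lemma~\ref{xxlem3.6} and Bell's reduction to monomial algebras, whereas you treat $r=0,1$ by ad hoc examples; this is fine but not necessary.
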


The Gelfand-Kirillov dimension of an operad is closely related
to the generating series that is defined as follows. Let $\PP$
be a locally finite operad. The {\it generating series} of
$\PP$ is defined to be the formal power series \cite[(0.1.2)]{KP15}
\begin{equation}
\label{E0.2.1}\tag{E0.2.1}
G_{\PP}(z):=\sum_{n=0}^{\infty} \dim_{\FF} \PP(n) \; z^n.
\end{equation}

We recall the following definition.

\begin{definition} \cite{Sta80, Zei90, BBY12, Ber14, KP15}
\label{xxdef0.3}
Let $F(z):=\sum_{n\geq 0} f(n) z^n$ be a formal power series or
a $C^{\infty}$-function where $f(n)\in {\mathbb R}$ for all $n$.
\begin{enumerate}
\item[(1)]
$F(z)$ is called {\it holonomic} (also called {\it $D$-finite}
or {\it differentiably finite}) if it satisfies a nontrivial
linear differential equation with polynomial coefficients.
\item[(2)]
$F(z)$ is called {\it differential algebraic} if it satisfies
a nontrivial algebraic differential equation with polynomial
coefficients.
\end{enumerate}
\end{definition}

It is well-known that
\begin{equation}
\notag%\label{E0.3.1}\tag{E0.3.1}
{\text{rational }}
\Longrightarrow
{\text{ algebraic (over ${\mathbb R}(z)$)}}
\Longrightarrow
{\text{ holonomic}}\Longrightarrow
{\text{differential algebraic}}
\end{equation}
where the second implication is \cite[Theorem 2.1]{Sta80}.
Several researchers have recently been studying holonomic
and differential algebraic property of $G_{\PP}(z)$
\cite{Ber14, KP15}. In particular, Khoroshkin-Piontkovski
showed that, under moderate assumptions, operads with a finite
Gr{\" o}bner basis have rational, or algebraic, or differential
algebraic generating series \cite[Theorems 0.1.3, 0.1.4, and 0.1.5]{KP15}.
In \cite[Section 4]{KP15} Khoroshkin-Piontkovski listed
several expectations and conjectures, one of which is

\begin{expectation} \cite[Expectation 2]{KP15}
\label{xxexp0.4}
The generating series of a generic finitely presented
nonsymmetric operads is algebraic over ${\mathbb Z}[z]$.
\end{expectation}

We construct a finitely presented nonsymmetric operad
such that the generating series is not holonomic (hence, not
algebraic), which provides a (non-generic) counterexample to the
above Expectation [Example \ref{xxex6.4}]. We also prove

\begin{proposition}
\label{xxpro0.5} Let $r\in R_{\GK}\setminus\{0\}$. Then there
is a single-element generated locally finite nonsymmetric operad
$\PP$ with $\GK(\PP)=r$ and $G_{\PP}(z)$ not being holonomic.
As a consequence, $G_{\PP}(z)$ is neither rational nor algebraic
in this case. Therefore such an operad is a counterexample to
\cite[Expectation 3]{KP15}.
\end{proposition}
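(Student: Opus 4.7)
The plan is to perturb the construction of Theorem~\ref{xxthm0.2}(2) by a sparse lacunary correction whose generating series is non-holonomic but whose growth is negligible against $r$. Fix a super-lacunary arity sequence, for instance $n_k := 2^{2^k}$, and set $H(z) := \sum_{k\geq 1} z^{n_k}$. Because $|\{k : n_k \leq N\}| = O(\log\log N)$, a standard argument shows $H(z)$ cannot be holonomic: a nontrivial polynomial linear recurrence on the coefficients would force the support to grow at least polynomially past some index, contradicting the doubly-exponential gaps.

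Starting from the single-element generated, single-branched, locally finite operad $\PP_r$ with $\GK(\PP_r)=r$ produced by Theorem~\ref{xxthm0.2}(2), I would construct the desired $\PP$ as a quotient of the same free operad on a single binary generator by a set of relations obtained by selectively weakening the defining relations of $\PP_r$ at the arities $n_k$. In each arity $n_k$, retain every reduction except the one that kills a single carefully chosen tree monomial $T_k$, so that $T_k$ becomes a new normal-form element while no other normal form is introduced in any other arity. The existence of such a modification is verified via the Gr\"obner-Shirshov basis framework and the Composition-Diamond Lemma underlying the earlier sections of the paper: one chooses $T_k$ inside a region of the tree monomial lattice whose compositions with every surviving single-branched basis element still lie in the relations that remain.

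With this done, $G_{\PP}(z) = G_{\PP_r}(z) + H(z)$. The series $G_{\PP_r}(z)$ is algebraic (hence holonomic), since a single-element generated single-branched operad has its tree monomial basis essentially in bijection with the words of a graded associative algebra on one generator; therefore the sum with the non-holonomic $H(z)$ is non-holonomic. Simultaneously, the perturbation adds at most one dimension in each arity $n_k$, contributing only $O(\log\log n)$ to the partial dimension sum and leaving $\GK(\PP)=\GK(\PP_r)=r$ unchanged. The implication chain rational $\Rightarrow$ algebraic $\Rightarrow$ holonomic recorded in the introduction then immediately rules out both rationality and algebraicity of $G_{\PP}(z)$, confirming the counterexample to \cite[Expectation~3]{KP15}. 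The main obstacle is the selective-weakening step: one must ensure that resurrecting $T_k$ does not inadvertently resurrect other tree monomials in arities outside $\{n_k\}$ via operadic composition, and this reduces to the combinatorial control of tree monomial compositions that already powers the proofs of Theorems~\ref{xxthm0.1} and~\ref{xxthm0.2}.
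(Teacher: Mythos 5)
The central assertion on which your argument leans --- that ``$G_{\PP_r}(z)$ is algebraic (hence holonomic), since a single-element generated single-branched operad has its tree monomial basis essentially in bijection with the words of a graded associative algebra on one generator'' --- is not true. In Construction~\ref{xxcon5.1} the operad $\QQ_A$ is built from a monomial algebra $A$ on $d\geq 2$ generators (not one), and $G_{\QQ_A}(z)=z+z^d H_A(z^{d-1})$ is holonomic if and only if $H_A(z)$ is. But the Hilbert series of a finitely generated monomial algebra can easily fail to be holonomic: the paper's own Example~\ref{xxex6.2} constructs such a $U$ with $\GK(U)=1$. So you have not established the holonomicity of $G_{\PP_r}(z)$, and without it the final step ``holonomic plus non-holonomic is non-holonomic'' does not fire. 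Conversely, if $G_{\PP_r}(z)$ already happens to be non-holonomic, your perturbation is unnecessary. The paper deals with precisely this dichotomy by a case split at the algebra level: take the monomial algebra $A$ from \cite[Remark 4.1]{Bel15}; if $H_A(z)$ is not holonomic set $B=A$, and otherwise set $B=\FF\oplus(A_{\geq 1}\oplus U_{\geq 1})$ so that $H_B=H_A+H_U-1$ is non-holonomic by \cite[Holonomic Theorem 2]{Ber14}; then apply Construction~\ref{xxcon5.1} to $B$. Your argument has no analogue of this fallback.

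The second gap is the ``selective weakening'' step, which you yourself flag as the main obstacle but then do not carry out. The normal forms of a monomial operad must be closed under taking submonomials, so you cannot freely adjoin a single tree monomial $T_k$ in arity $n_k$: every submonomial of $T_k$, in every intermediate arity, must already be a normal form of $\PP_r$ or another resurrected $T_j$. Dually, deleting a generator from the monomial ideal resurrects the entire infinite divisor family of that generator, not one element per chosen arity. Making ``exactly one new normal form in each arity $n_k$ and none elsewhere'' work would require a concrete combinatorial construction of the ideal, which your proposal does not supply and which is sensitive to the specific $\PP_r$ chosen. The paper avoids this entirely by perturbing at the level of the associative monomial algebra (where ``direct sum with $U_{\geq 1}$'' is a transparent operation on Hilbert series) and only then passing to the operad via Construction~\ref{xxcon5.1}, so no Gr\"obner-basis bookkeeping is needed.
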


\begin{remark}
\label{xxrem0.6}
Constructions \ref{xxcon1.3}, \ref{xxcon5.1} and \ref{xxcon7.1}
provide useful constructions of nonsymmetric operads
(or symmetric ones in Construction \ref{xxcon7.1}) from
graded algebras (or monomial algebras in Construction
\ref{xxcon5.1}). A lot of algebraic properties of graded (or
monomial) algebras can be transformed to the corresponding
properties of the related operads.
Using this idea, in addition to Proposition \ref{xxpro0.5} and
Example \ref{xxex6.4}, we obtain a potential counterexample to
\cite[Expectation 1]{KP15} in Example \ref{xxex7.5}.
\end{remark}

This paper mainly concerns nonsymmetric operads.
In the final section we touch upon the symmetric ones. The
following is easy to prove.

\begin{theorem}
\label{xxthm0.7}
\begin{enumerate}
\item[(1)]
Let $\PP$ be a finitely generated locally finite symmetric
operad. Then $\GK(\PP)\in R_{\GK} \cup (1,2)$.
\item[(2)]
For every $r\in R_{\GK}\setminus (2,3)$, there exists a
finitely generated locally finite symmetric operad $\PP$
such that $\GK(\PP)=r$.
\item[(3)]
For every $r\in R_{\GK}\setminus (\{0\} \cup \{1\} \cup (2,3))$,
there exists a finitely generated locally finite symmetric operad
$\PP$ such that $\GK(\PP)=r$ and that $G_{\PP}(z)$ is not
holonomic.
\end{enumerate}
\end{theorem}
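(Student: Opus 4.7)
For part (1), the strategy is to establish the gap in the interval $(0,1)$ directly. Suppose $\PP$ is a finitely generated locally finite symmetric operad with $\GK(\PP) > 0$; I aim to show $\GK(\PP) \ge 1$. If every generator of $\PP$ has arity $1$, then $\PP(n) = 0$ for $n \ne 1$, so $\PP = \PP(1)$ is a finitely generated associative algebra under $\circ_1$, and Bergman's classical gap theorem for algebras yields $\GK(\PP) \ge 1$. Otherwise, I pick a generator $\mu$ of arity $k \ge 2$ and consider the right-iterated compositions $\mu^{(j)} := \mu \circ_1 \mu \circ_1 \cdots \circ_1 \mu \in \PP(1 + j(k-1))$. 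If $\mu^{(j)} \ne 0$ for every $j$, then $\sum_{i=0}^n \dim \PP(i) \ge \lfloor (n-1)/(k-1)\rfloor$ already yields $\GK(\PP) \ge 1$. If instead $\mu^{(j_0)} = 0$ for some $j_0$, I use the infinite-dimensionality of $\PP$ together with finite generation to extract an alternative sequence of nonzero tree monomials (built using other slots $\circ_i$ or other generators) whose arities still grow linearly. Combined with the trivial case $\GK(\PP)=0$, this gives $\GK(\PP) \in \{0\}\cup[1,\infty)\cup\{\infty\} = R_{\GK}\cup(1,2)$.

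For part (2), I realize each $r \in \{0,1,2\}\cup[3,\infty)\cup\{\infty\}$ by an explicit symmetric operad. Take the trivial (unit) operad for $r = 0$ and the commutative operad with $\dim$ one in every arity (and trivial $S_n$-actions) for $r = 1$. For $r \in \{2\}\cup[3,\infty)\cup\{\infty\}$, I apply Construction \ref{xxcon7.1} to a finitely generated graded associative algebra $A$ whose GK-dimension is chosen so that the construction outputs an operad with $\GK(\PP) = r$. Since Construction \ref{xxcon7.1} produces a finitely generated locally finite symmetric operad and transforms the GK-dimension of $A$ by a controlled one-step shift, and since finitely generated graded associative algebras realize every value in $R_{\GK}$ by Bergman's theorem (and the explicit examples used earlier in the paper), I obtain symmetric operads realizing every value in $\{2\}\cup[3,\infty)\cup\{\infty\}$. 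The residual gap $(2,3)$ in the output reflects the algebra gap $(1,2)$ shifted by the construction.

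For part (3), I apply the same Construction \ref{xxcon7.1} to graded algebras whose Hilbert series is non-holonomic, which exist by the methods used to prove Proposition \ref{xxpro0.5}. Since the generating series $G_\PP(z)$ and the Hilbert series of $A$ are related under Construction \ref{xxcon7.1} by an elementary algebraic transformation that preserves non-holonomicity, the resulting symmetric operads realize every $r \in \{2\}\cup[3,\infty)\cup\{\infty\}$ with non-holonomic $G_\PP(z)$, establishing part (3). The main obstacle is the subcase in part (1) when $\mu^{(j_0)}$ vanishes: showing that operadic composition together with infinite-dimensionality still force linearly-many nonzero arities is the operadic analogue of the inductive step in Bergman's original proof, and requires a careful combinatorial argument rather than a purely formal adaptation.
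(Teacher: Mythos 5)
Your proof of part (1) is both over-engineered and incomplete, and this is the main deficiency. The paper's argument (Lemma \ref{xxlem7.3}, the symmetric analogue of Proposition \ref{xxpro3.8}) is a short, purely formal filtration argument: if $\PP$ is infinite-dimensional and generated by a finite-dimensional subcollection $\VV$ containing $\id$, then the chain $\VSG^0\subseteq\VSG^1\subseteq\cdots$ must be strictly increasing at every step (if $\VSG^{m+1}=\VSG^m$ for some $m$, the chain stabilizes and $\PP=\VSG^m$ is finite dimensional), giving $\dim\VSG^m\geq m+1$, whence $\GK(\PP)\geq1$ by Lemma \ref{xxlem7.2}(1). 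This sidesteps all casework on generator arities. Your attempt instead tries to exhibit an explicit infinite family of nonzero tree monomials of linearly growing arity, which is strictly harder than bounding dimensions of filtration pieces, and you yourself flag the hard subcase (when $\mu^{(j_0)}=0$ and you must ``extract an alternative sequence'') as an unresolved gap. That gap is genuine; you should replace the whole casework by the filtration argument.

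Parts (2) and (3) are essentially sound but take slightly different routes from the paper. For (2), both you and the paper rely on Construction \ref{xxcon7.1} to convert a graded algebra $A$ into a symmetric operad $\SO_A$ with $\dim\SO_A(n)=n\dim A_{n-1}$; the paper handles integer $r$ by citing well-known examples and applies the construction only for non-integer $r>3$, whereas you apply the construction uniformly for all $r\geq2$. Either works, but in both cases the crucial step $\GK(\SO_A)=\GK(A)+1$ must actually be justified; the paper does this via a two-sided estimate using the fact that $\dim A_n$ can be taken weakly increasing (Lemma \ref{xxlem3.6}(3)), and you should say so rather than appeal to ``a controlled one-step shift.'' For (3), your approach is genuinely different: you apply Construction \ref{xxcon7.1} directly to an algebra with non-holonomic Hilbert series, using the relation $G_{\SO_A}(z)=z(zH_A(z))'$ from \eqref{E7.1.1} and asserting the transformation preserves non-holonomicity. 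That claim is true (holonomic series are closed under multiplication by $z$, differentiation, antidifferentiation, and division by $z$, so $H\mapsto z(zH)'$ both preserves and reflects holonomicity), but you should spell it out. The paper instead builds a single operad $\SO_U$ with $\GK=2$ and non-holonomic generating series and then perturbs any $\PP$ realizing $r$ by direct-summing with $\SO_U$, using only closure of holonomicity under addition. Your version is more direct; the paper's has the advantage of making ``$\GK$ unchanged, holonomicity destroyed'' transparent with minimal analytic input.
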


In light of \cite{KP15, Ber14}, it would be very interesting
to determine which classes of operads have rational (resp.
algebraic, holonomic, differential algebraic) generating series.
Theorem \ref{xxthm0.7} suggests that, generically, $G_{\PP}(z)$ is not
holonomic. Theorem \ref{xxthm0.7}(1,2) lead to the following question.

\begin{question}
\label{xxque0.8}
Let $r\in (1,2) \cup (2,3)$. Is there a finitely generated
locally finite symmetric operad $\PP$ such that
$\GK(\PP)=r$?
\end{question}

In view of Theorem \ref{xxthm0.7}(2,3) the following result of
\cite{BYZ20} is quite surprising, see \cite{BYZ20} for
details.

\begin{theorem}%\cite[Theorem 0.1(1)]{BYZ20}
\label{xxthm0.10}
Let $\PP$ be a 2-unitary locally finite symmetric operad. Then
$\GK(\PP)$ is either an integer or the infinity. If $\GK(\PP)$
is finite, then $\PP$ is finitely generated and $G_{\PP}(z)$
is rational.
\end{theorem}

This paper is organized as follows. Section \ref{xxsec1}
recalls basic definitions and properties of nonsymmetric operads.
Section \ref{xxsec2} introduces Gr\"obner-Shirshov bases of
nonsymmetric operads. Section \ref{xxsec3} contains definitions,
examples and properties of the GK-dimension of nonsymmetric
operads. In Section \ref{xxsec4}, we prove the main result,
Theorem \ref{xxthm0.1}, and in Section \ref{xxsec5}, we prove
Theorem \ref{xxthm0.2}. In Section \ref{xxsec6} we study the
generating series of operads and prove Proposition \ref{xxpro0.5}.
Finally, Section \ref{xxsec7} provides some comments and
examples about symmetric operads.

\section{Preliminaries}
\label{xxsec1}
Let $\FF$ be the base field and $\FF^*:=\FF\setminus\{0\}$.
Let $\NN$ denote the natural numbers and $\NN^*:=\NN\setminus\{0\}$.
To save space, some non-essential details are omitted here
and there. But we try to provide as much detail as possible
for the proof of the main result -- Theorem \ref{xxthm0.1}.

\subsection{Ns operads}
\label{xxsec1.1}
The following definition is copied from
\cite[Definition 3.2.2.3]{BD16} (see also \cite[Chapter 5]{LV12}).

\begin{definition}
\label{xxdef1.1}
A \emph{nonsymmetric operad} or simply \emph{ns operad} is
a collection of vector spaces $\PP=\{\PP(n)\}_{n\geq0}$
equipped with an element $\id\in\PP(1)$ (called the
\emph{identity element}) and maps (called \emph{partial
compositions}), for $1\leq i\leq n$,
\[
\circ_i:\PP(n)\otimes \PP(m)\to \PP(n+m-1),\quad
\alpha\otimes \beta\mapsto\alpha\circ_i\beta,
\]
which satisfy the following properties for all $\alpha\in\PP(n)$,
$\beta\in\PP(m)$ and $\gamma\in \PP(r)$:
\begin{enumerate}[(i)]
\item
(sequential axiom)
\begin{align}
\label{E1.1.1}\tag{E1.1.1}
(\alpha\circ_i\beta)\circ_j\gamma=
\alpha\circ_i(\beta\circ_{j-i+1}\gamma)\ \text{ for } i\leq j\leq i+m-1;
\end{align}
\item
(parallel axiom)
\begin{align}
\label{E1.1.2}\tag{E1.1.2}
(\alpha\circ_i\beta)\circ_j\gamma=
\begin{cases}
(\alpha\circ_{j-m+1}\gamma)\circ_i\beta,& i+m\leq j\leq n+m-1,\\
(\alpha\circ_{j}\gamma)\circ_{i+r-1}\beta,& 1\leq j\leq i-1;
\end{cases}
\end{align}
\item
(unit axiom)
\begin{align}
\label{E1.1.3}\tag{E1.1.3}
\id\circ_1\alpha=\alpha,\quad
\alpha\circ_i\id=\alpha\ \text{ for }1\leq i\leq n.
\end{align}
\end{enumerate}
\end{definition}

The above definition is called the \emph{partial definition
of a ns operad}. For the classical definition and the
monoidal definition, the interested readers are referred to
\cite[Chapter 5]{LV12}.
Except for the final section we only consider ns operads, and
sometimes ``ns'' or the word ``nonsymmetric'' is omitted.

A collection $\PP=\{\PP(n)\}_{n\geq0}$ of spaces (especially,
an operad) is called \emph{finite dimensional} if the
\emph{dimension of $\PP$} is finite, i.e.,
$\dim\PP:=\dim\left(\oplus_{n\geq0}\PP(n)\right)<\infty$;
$\PP$ is called \emph{locally finite}
(respectively, \emph{reduced}, \emph{connected}) if
$\PP(n)$ is finite dimensional for all $n\in\NN$
(respectively, if $\PP(0)=0$, if $\PP(1)=\FF\id\cong \FF$).
We say a collection $\VV=\{\VV(n)\}_{n\geq0}$ of spaces
is a \emph{subcollection} of $\PP$ if $\VV(n)$ is a
subspace of $\PP(n)$ for all $n\geq0$. Furthermore, if
the subcollection $\VV$ is an operad with the partial
compositions of $\PP$, we call $\VV$ a \emph{suboperad}
of $\PP$.

To simplify notation, we also view a collection
$\GG=\{\GG(n)\}_{n\geq 0}$ of sets (e.g., a collection
of vector spaces) as a disjoint union $\GG=\sqcup_{n\geq0}\GG(n)$
(respectively, a direct sum $\GG=\oplus_{n\geq 0}\GG(n)$),
and vice versa.

As demonstrated in the following examples, an operad
can be viewed as a generalization of an algebra.

\begin{example} \cite[p.137]{LV12}
\label{xxex1.2}
A unital associative algebra $A$ can be interpreted as
an operad $\PP$ with $\PP(1)=A$ and $\PP(n)=0$ for all
$n\neq1$, and the compositions in $\PP$ are given by
the multiplication of $A$.
\end{example}

The following construction is due to Dotsenko
\cite{Dot19}.

\begin{construction} \cite[Definition 3.2(2)]{Dot19}.
\label{xxcon1.3}
Let $A:=\oplus_{i\geq0} A_i$ be an $\NN$-graded algebra with unit
$1_A$. Suppose that $A$ has a graded augmentation $\epsilon:
A\to \FF$ such that $\fm:=\ker \epsilon$ is a maximal graded
ideal of $A$. Let $\PP_A(0)=0$ and $\PP_A(n)=A_{n-1}$ for all
$n\geq1$. Define compositions as follows
$$\begin{aligned}
\circ_i:\quad &\PP_A(m)\otimes \PP_A(n)\to \PP_A(n+m-1),\\
& \quad a_{m-1}\otimes a_{n-1}\mapsto \quad
\begin{cases}
    c a_{m-1} &a_{n-1}=c 1_A, c\in \FF,\\
    a_{m-1}a_{n-1} & a_{n-1}\in \fm, i=1,\\
    0 & a_{n-1}\in \fm, i\neq 1.
\end{cases}
\end{aligned}
$$
It is easy to check by definition that
$\PP_A:=\{\PP_A(n)\}_{n\geq 0}$ is an operad with $\id:=1_A$.
This operad is called the {\it min-envelope operad} of $A$.
Piontkovski \cite[Theorem 3.1]{Pio17} used this construction to
produce a counterexample to \cite[Conjecture 10.4.1.1]{BD16}.
See \cite[Definition 3.2(1)]{Dot19} for another related construction.
\end{construction}

If $A$ is generated by $A_1$, then $\PP_A$ is generated by
$\PP_A(2)$. By Lemma \ref{xxlem6.3}, $A$ is finitely
generated as an algebra if and only if $\PP_A$ is finitely
generated as an operad.

For an $\NN$-graded locally finite algebra $A$, its Hilbert
series is defined to be
\begin{equation}
\label{E1.3.1}\tag{E1.3.1}
H_A(z)=\sum_{n=0}^{\infty} \dim A_n z^n
\end{equation}
in the same way of defining the generating series of
an operad.

By the construction given in Construction \ref{xxcon1.3},
one sees that
\begin{equation}
\label{E1.3.2}\tag{E1.3.2}
G_{\PP_A}(z)=zH_A(z).
\end{equation}

A \emph{morphism} $\phi$ between two operads $\PP$ and
$\PP'$ is a collection of linear maps
$\phi_n: \PP(n)\to\PP'(n)$, $n\geq0$, such that
$\phi_1(\id_{\PP})=\id_{\PP'}$ and
\[
\phi_m(u)\circ_i\phi_n(v)=\phi_{m+n-1}(u\circ_i v)
\]
for all $u\in\PP(m)$, $v\in\PP(n)$, $1\leq i\leq m$, $n\geq 0$.
If each $\PP(n)$ is a subspace of $\PP'(n)$, we call
$\PP$ a \emph{subcollection} of $\PP'$, and write
$\PP\subseteq\PP'$.

An \emph{operation alphabet} (or \emph{generating operations})
is a collection $\XX=\{\XX(n)\}_{n\geq0}$ of sets $\XX(n)$.
The number $n$ is called the \emph{arity} of an element
$x\in \XX(n)$ and denoted by $\Ar(x)=n$.

Let $\XX$ be an operation alphabet. The \emph{free operad}
over $\XX$ is an operad $\FFF(\XX)$ equipped with an
inclusion $\eta:\XX\to \FFF(\XX)$ (i.e., a collection of
inclusions $\eta_n:\XX(n)\to \FFF(\XX)(n)$ for all $n\geq 0$)
which satisfies the following universal property:
any map $f:\XX\to \PP$, where $\PP$ is an operad,
extends uniquely to an operad morphism $\tilde{f}:
\FFF(\XX)\to \PP$ \cite[Section 3.3]{BD16}.

\subsection{Planar rooted tree}
\label{xxsec1.2}
Similar to the case of associative algebras, an operad
can be presented by generators and relations, i.e.,
as a quotient of a free operad. In this subsection, we
introduce a language for working with planar rooted trees,
which will be used later to construct free operads.
We mainly follow the ideas in \cite[Section 3.3]{BD16}
and use the language introduced in \cite{BD16} with minor
modification for the rest of the paper, see Remark
\ref{xxrem1.6}.

\begin{definition}
\label{xxdef1.4}
A \emph{rooted tree} ~$\tau$ consists of
\begin{itemize}
\item
a finite set $\Vert(\tau)$ of \emph{vertices},
which is a disjoint union
\[
\Vert(\tau)=\Int(\tau)\sqcup \Leaves(\tau)\sqcup \{r\},
\]
where elements of the (possible empty) set $\Int(\tau)$
are called \emph{internal vertices} of $\tau$, elements
of the nonempty set $\Leaves(\tau)$ are called \emph{leaves}
of $\tau$, and the element $r$ is called the \emph{root}
of $\tau$ and denoted by $\Root(\tau)$; and
\item
a \emph{parent function}
\[
\Parent=\Parent_{\tau}:\Vert(\tau)\setminus\{r\}\to
\Vert(\tau)\setminus \Leaves(\tau)
\]
such that
$|\Parent^{-1}(r)|=1$, {$|\Parent^{-1}(v)|\geq 1$ for
all $v\in\Int(\tau)$,} and the \emph{connectivity condition}
is satisfied: for each vertex $v\neq r$, there are an
$h\in\NN^*$ and vertices
$v_0=r, v_1,\dots, v_{h-1}, v_h=v$,
such that $v_i=\Parent(v_{i+1})$ for $0\leq i\leq h-1$.
\end{itemize}
\end{definition}

When we draw a rooted tree on the plane, the root is
always drawn at the bottom of the tree; a hollow circle
presents an internal vertex while a black solid circle
presents a leaf or the root; a solid segment between two
vertices indicates that the lower vertex is the parent of
the higher one. See Figure \ref{xxfig1-xxrem1.6} for examples.

\begin{figure}[h]
\centering
\begin{tikzpicture}
  \tikzstyle{every node}=[draw,thick,circle,fill=white,minimum size=6pt, inner sep=1pt]
    \node (r) at (0,0) [fill=black,minimum size=0pt,label=right:$r$] {};
    \node (1) at (0,1) [fill=black,minimum size=0pt] {};
    \draw  (r) -- (1);
\node[minimum size=0pt,inner sep=0pt,label=below:$\tau_0$] (name) at (0,-0.5){};
%Name of the tree
\end{tikzpicture}
\hspace{20mm}
\begin{tikzpicture}[scale=.6]
  \tikzstyle{every node}=[draw,thick,circle,fill=white,minimum size=6pt, inner sep=1pt]
    \node (r) at (0,0) [fill=black,minimum size=0pt,label=right:$r$] {};
    \node (1) at (0,1) {};
    \node (2) at (0,2) {};
    \node (3) at (0,3) [fill=black,minimum size=0pt] {};
    \draw  (r) -- (1)--(2)--(3);
\node[minimum size=0pt,inner sep=0pt,label=below:$\tau_1$] (name) at (0,-0.5){};
%Name of the tree
\end{tikzpicture}
\hspace{20mm}
\begin{tikzpicture}[scale=0.6]
  \tikzstyle{every node}=[draw,thick,circle,fill=white,minimum size=6pt, inner sep=1pt]
    \node (r) at (0,0) [fill=black,minimum size=0pt,label=right:$r$] {};
    \node (1) at (0,1)  {};
    \node (2a) at (-1,2) [fill=black,minimum size=0pt] {};
    \node (2b) at (1,2) {};
    \node (3a) at (0,3)  [fill=black,minimum size=0pt] {};
    \node (3b) at (1,3)  [fill=black,minimum size=0pt] {};
    \node (3c) at (2,3)  [fill=black,minimum size=0pt] {};
    \draw  (r) -- (1)--(2a);
    \draw         (1)--(2b)--(3a);
    \draw              (2b)--(3b);
    \draw              (2b)--(3c);
\node[minimum size=0pt,inner sep=0pt,label=below:$\tau_2$] (name) at (0,-0.5){};
%Name of the tree
\end{tikzpicture}
\caption{Rooted trees}\label{xxfig1-xxrem1.6}
\end{figure}
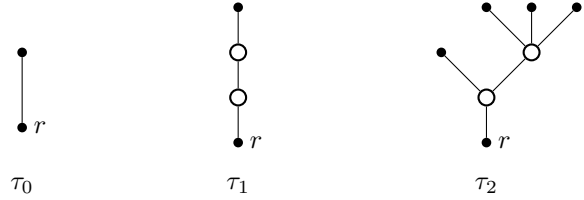

Let $\tau$ be a rooted tree. The following lemma is
straightforward.

\begin{lemma}
\label{xxlem1.5}
Suppose $v\in \Vert(\tau)\setminus\Root(\tau)$, and
$h$ and $v_i$ for $0\leq i\leq h$ are the same as in
Definition \ref{xxdef1.4}. Then the number $h$ and the
sequence $v_0,v_1,\dots,v_h$ are uniquely determined by
$v$, called respectively the \emph{height} of $v$
(denoted by $\h(v)$) and the \emph{path from root to $v$}.
\end{lemma}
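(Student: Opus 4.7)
The plan is to observe that the connectivity condition, read backwards, forces the sequence to be the orbit of $v$ under iterated application of the parent function. The equation $v_i = \Parent(v_{i+1})$ gives $v_{h-1} = \Parent(v)$, $v_{h-2} = \Parent(v_{h-1}) = \Parent^2(v)$, and inductively $v_{h-k} = \Parent^k(v)$ for $0 \le k \le h$. Thus, once the length $h$ is fixed, the entire sequence $v_0, v_1, \ldots, v_h$ is uniquely determined by $v$, since $\Parent$ is a single-valued function.

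It then remains to show that $h$ itself is uniquely determined. I would suppose two sequences $v_0 = r, v_1, \ldots, v_h = v$ and $v_0' = r, v_1', \ldots, v_{h'}' = v$ both satisfy the connectivity condition, assume without loss of generality that $h \le h'$, and invoke the previous step to write $v_{h'-k}' = \Parent^k(v) = v_{h-k}$ for all $0 \le k \le h$. In particular $v_{h'-h}' = v_0 = r$. But the domain of $\Parent$ is $\Vert(\tau)\setminus\{r\}$, so every entry $v_i'$ with index $i \ge 1$ must differ from $r$, as otherwise $\Parent(v_i') = v_{i-1}'$ would be undefined. This forces $h' - h = 0$, whence $h = h'$ and the two sequences agree term by term. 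The height $\h(v) := h$ and the path $v_0, v_1, \ldots, v_h$ are therefore well defined.

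There is essentially no genuine obstacle here; existence of at least one such sequence is already part of Definition \ref{xxdef1.4}, and uniqueness is a formal consequence of $\Parent$ being a function. The only point that requires a moment of attention is recognizing that the root cannot serve as an intermediate vertex of any valid path, precisely because it lies outside the domain of $\Parent$; this is what pins down the length.
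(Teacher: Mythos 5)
Your proof is correct. The paper simply declares the lemma ``straightforward'' and offers no proof, so there is nothing to compare against in terms of approach; you have supplied the verification the paper omits. The two steps you identify are exactly the right ones: the recursion $v_{h-k}=\Parent^k(v)$ pins down the sequence once $h$ is fixed, and the observation that $r$ lies outside the domain of $\Parent$ (so no $v_i'$ with $i\geq 1$ can equal $r$, else $\Parent(v_i')$ would be undefined) rules out the possibility $h'>h$ by forcing $v_{h'-h}'=r$ to occur only at index $0$. One could also phrase the length argument as: $h$ is the least $k\geq 1$ with $\Parent^k(v)=r$, which exists by the connectivity condition and is unique because $\Parent^k(v)$ is undefined once it hits $r$. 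That is the same content in slightly different clothing.
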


The \emph{height} of $\tau$ is the maximal height of its
internal vertices, i.e.,
\[
\h(\tau):=\max\{\h(v):v\in\Int(\tau)\}.
\]
For $v\in \Vert(\tau)\setminus\Leaves(\tau)$,
elements in $\Child(v):=\Parent^{-1}(v)$ are called
\emph{children} of $v$. More generally we define the
\emph{descendants} of $v$ to be
\[
\Child^{\infty}(v):=\{v'\in\Vert(\tau):\Parent^i(v')=v
\text{ for some }i\in\NN^*\}.
\]
The number of children of $v$ (respectively, the number
of leaves of $\tau$) is called the \emph{arity} of $v$
(respectively, of $\tau$), denoted by $\Ar(v)$
(respectively, $\Ar(\tau)$). Define the \emph{weight} of
$\tau$ to be $\wt(\tau):=|\Int(\tau)|$. The only tree with
weight $0$ is called the \emph{trivial (rooted) tree},
denoted by $\tau_0$, the only two vertices of which are
the root $\Root(\tau_0)$ and the leaf in $\Child(\Root(\tau_0))$,
see Figure \ref{xxfig1-xxrem1.6}. For a nontrivial rooted tree,
the only child of the root must be an internal vertex.

When working with more than one
rooted trees, we usually use subscripts (e.g., the height
$\h_{\tau}(v)$) to indicate the tree under consideration.
We write a map $\varphi:\Vert(\tau)\to \Vert(\tau')$ between
the vertex sets of rooted trees $\tau$ and $\tau'$ as
$\varphi:\tau\to \tau'$ for short. An \emph{isomorphism} from
$\tau$ to $\tau'$ is a bijective map $\varphi:\tau\to \tau'$
that respects the parent functions, i.e.,
\[
\varphi(\Parent_{\tau}(v))=\Parent_{\tau'}(\varphi(v)),
\ \forall v\in\Vert(\tau)\setminus\Root(\tau).
\]
We say $\tau$ is \emph{isomorphic} to $\tau'$ if there is an
isomorphism from $\tau$ to $\tau'$. It is easy to see that
an isomorphism also respects the type of each vertex, more
precisely, if $\varphi:\tau\to\tau'$ is an isomorphism, then
$v\in\Vert(\tau)$ is a leaf (respectively, an internal vertex,
the root) in $\tau$ if and only if so is $\varphi(v)$ in $\tau'$.

\begin{remark}
\label{xxrem1.6}
The only difference between our rooted trees and those
in \cite[Section 3.3]{BD16} is that an internal vertex
of our rooted tree always has positive arity while an
internal vertex in the sense of \cite{BD16} may have zero
arity. Note that the existence of internal vertices of
arity zero makes \cite[Proposition 3.4.1.6]{BD16} false.
(see Example \ref{xxex2.3} for a counterexample).
\end{remark}

\begin{definition}
\label{xxdef1.7}
A \emph{planar rooted tree} (\emph{PRT}, for short) is a
rooted tree together with a planar structure, i.e., a rooted tree with a total
order on $\Child(v)$ for each $v\in\Vert(\tau)\setminus
\Leaves(\tau)$.
\end{definition}

The planar structure of a PRT $\tau$ induces a total order
on $\Vert(\tau)$: Suppose $u$ and $u'$ are two different
vertices and consider the paths from root to $u$ and $u'$:
\[
v_0=r,v_1,\dots,v_{\h(u)};\
v'_0=r,v'_1,\dots,v'_{\h(u')}.
\]
We say $u<u'$ if one of the following holds:
\begin{enumerate}[(i)]
\item
$\h(u)<\h(u')$ and $v_0=v'_0,v_1=v'_1,
\dots,v_{\h(u)}=v'_{\h(u)}$;
\item
there is $1\leq k\leq \min\{\h(u),\h(u')\} -1$ such that
$v_0=v'_0,v_1=v'_1,\dots,v_k=v'_k$ and $v_{k+1}< v'_{k+1}$.
\end{enumerate}
If there is no confusion, we usually use positive integers
to denote the leaves, i.e., $\Leaves(\tau)=\{1,2,\dots,\Ar(\tau)\}$,
to indicate the order on $\Leaves(\tau)$ in the obvious way.
We draw a PRT on the plane in a way that the planar order
on $\Child(v)$ is determined by ordering the corresponding
vertices left-to-right. A PRT $\tau$ is \emph{isomorphic} to
PRT $\tau'$ if there exists a rooted tree isomorphism
$\varphi:\tau\to \tau'$ such that $\varphi(u)<\varphi(v)$
whenever $u<v$ for $u,v\in \Vert(\tau)$.

\begin{example}
\label{xxex1.8}
Let $\tau$ and $\tau'$ be PRTs as drawn in Figure \ref{xxfig2-xxex1.8}.
It is easy to see that $\tau$ is isomorphic to $\tau'$ as
rooted trees. However, they are not isomorphic as PRTs.
\end{example}

\begin{figure}[h]
\centering
\begin{tikzpicture}[scale=0.6]
  \tikzstyle{every node}=[draw,thick,circle,fill=white,minimum size=6pt, inner sep=1pt]
    \node (r) at (0,0) [fill=black,minimum size=0pt,label=right:$r$] {};
    \node (1) at (0,1)  {};
    \node (2a) at (-1,2) {};
    \node (2b) at (1,2) [fill=black,minimum size=0pt,label=above:$3$]{};
    \node (3a) at (-2,3)  [fill=black,minimum size=0pt,label=above:$1$]{};
    \node (3b) at (-0,3)  [fill=black,minimum size=0pt,label=above:$2$]{};
    \draw  (r) -- (1)--(2a)--(3a);
    \draw         (1)--(2b);
    \draw              (2a)--(3b);
\node[minimum size=0pt,inner sep=0pt,label=below:$\tau$] (name) at (0,-0.5){};
%Name of the tree
\end{tikzpicture}
\hspace{20mm}
\begin{tikzpicture}[scale=0.6]
  \tikzstyle{every node}=[draw,thick,circle,fill=white,minimum size=6pt, inner sep=1pt]
    \node (r) at (0,0) [fill=black,minimum size=0pt,label=right:$r'$] {};
    \node (1) at (0,1)  {};
    \node (2a) at (-1,2) [fill=black,minimum size=0pt,label=above:$1$] {};
    \node (2b) at (1,2) {};
    \node (3a) at (0,3)  [fill=black,minimum size=0pt,label=above:$2$] {};
    \node (3b) at (2,3)  [fill=black,minimum size=0pt,label=above:$3$] {};
    \draw  (r) -- (1)--(2a);
    \draw         (1)--(2b)--(3a);
    \draw              (2b)--(3b);
\node[minimum size=0pt,inner sep=0pt,label=below:$\tau'$] (name) at (0,-0.5){};
%Name of the tree
\end{tikzpicture}
\caption{Non-isomorphic PRTs that are isomorphic as
rooted trees}\label{xxfig2-xxex1.8}
\end{figure}
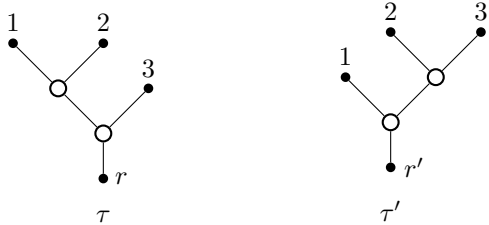

The following lemma is clear.

\begin{lemma} \cite[Definition 3.4.2.1]{BD16}
\label{xxlem1.9}
Let $\tau$ be a PRT. Suppose that $\emptyset\neq V'
\subseteq \Int(\tau)$ satisfies the following conditions
\begin{enumerate}[(i)]
\item
there is a unique $v'\in V'$ such that
$\Parent_{\tau}(v')\not\in V'$; and
\item
for each $v''\in V'$ there exist $h\in\NN^*$ and vertices
$v_1=v'$, $v_2,\dots,v_{h-1}, v_h=v''$ such that
$v_i=\Parent_{\tau}(v_{i+1})$ for all $1\leq i\leq h-1$.
\end{enumerate}
Then $V'$ defines a PRT $\tau'$ as follows
\begin{align*}
\Root(\tau')=\Parent_{\tau}(v'),\
\Int(\tau')=V',\
\Leaves(\tau')=\left(\bigcup_{v\in V'}\Child(v)\right)\setminus V',
\end{align*}
and the parent function and the planar structure of
$\tau'$ are the restrictions of the parent function and
the planar structure of $\tau$. We call $\tau'$ a
\emph{subtree} of $\tau$, denote $\tau'\subseteq \tau$.
\end{lemma}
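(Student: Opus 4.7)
The plan is to verify that the data $(\Root(\tau'), \Int(\tau'), \Leaves(\tau'))$ together with the restricted parent function and the restricted planar structure fulfils every clause of Definitions \ref{xxdef1.4} and \ref{xxdef1.7}. Each clause reduces to hypothesis (i), hypothesis (ii), or a standard tree-theoretic fact about $\tau$; there is no single deep step.

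First I would check that $\Vert(\tau') := \Int(\tau') \sqcup \Leaves(\tau') \sqcup \{\Root(\tau')\}$ is genuinely a disjoint union. Since $\Root(\tau') = \Parent_{\tau}(v') \notin V' = \Int(\tau')$ by (i), and $\Int(\tau') \cap \Leaves(\tau') = \emptyset$ by the very definition of $\Leaves(\tau')$, the only nontrivial point is that $\Parent_{\tau}(v')$ is not a $\tau$-child of any $w \in V'$. If it were, we would have $w = \Parent_{\tau}(\Parent_{\tau}(v'))$, hence $\h(w) = \h(v') - 2$; but hypothesis (ii) applied with $v'' = w$ yields a chain $v' = v_1, v_2, \dots, v_h = w$ of successive $\tau$-parents, forcing $\h(w) = \h(v') + h - 1 \geq \h(v')$, a contradiction.

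Next, declare $\Parent_{\tau'}$ to be the restriction of $\Parent_{\tau}$ to $\Vert(\tau') \setminus \{\Root(\tau')\}$. It lands in $V' \sqcup \{\Root(\tau')\} = \Vert(\tau') \setminus \Leaves(\tau')$ because: for $w \in V' \setminus \{v'\}$ hypothesis (i) forces $\Parent_{\tau}(w) \in V'$; for $v'$, $\Parent_{\tau}(v') = \Root(\tau')$ by construction; and for a leaf $u$ of $\tau'$, $u \in \Child(w)$ for some $w \in V'$ by the definition of $\Leaves(\tau')$. The condition $|\Parent_{\tau'}^{-1}(\Root(\tau'))| = 1$ then follows because $v'$ is the unique element of $V'$ with $\tau$-parent outside $V'$, and leaves of $\tau'$ have $\tau$-parents inside $V'$. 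For $w \in V'$, $|\Parent_{\tau'}^{-1}(w)| \geq 1$ holds because $w \in \Int(\tau)$ gives $\Child(w) \neq \emptyset$, and every element of $\Child(w)$ lies in $V' \sqcup \Leaves(\tau') \subseteq \Vert(\tau')$. Connectivity is inherited directly from (ii): for $w \in V'$ prepend $\Root(\tau')$ to the chain from $v'$ to $w$, and for $u \in \Leaves(\tau')$ append $u$ to the chain reaching its $\tau$-parent in $V'$.

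To close the argument, $\Leaves(\tau')$ is nonempty by picking $w \in V'$ of maximal $\tau$-height: any child of $w$ in $\tau$ has strictly greater $\tau$-height and therefore cannot lie in $V'$, so $\emptyset \neq \Child(w) \subseteq \Leaves(\tau')$. The planar structure on $\tau'$ is then the inherited one: for each $w \in V'$, $\Child_{\tau'}(w) = \Child(w)$ carries the total order from $\tau$, while $\Child_{\tau'}(\Root(\tau')) = \{v'\}$ is a singleton and is trivially ordered. The only genuine (though still mild) obstacle is the disjointness step---ensuring $\Parent_{\tau}(v')$ is not itself a leaf of $\tau'$---where one cannot proceed by definition-chasing alone and must invoke the tree-theoretic fact that $\tau$-height strictly increases along $\Parent^{-1}$.
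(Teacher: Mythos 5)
Your verification is correct and, since the paper simply declares this lemma ``clear'' (with a pointer to \cite[Definition 3.4.2.1]{BD16}) and supplies no argument, a direct definition-check is exactly the intended approach. Each step you give---disjointness of $\Vert(\tau')$ via the height-monotonicity argument, well-definedness and codomain of the restricted parent function, the cardinality conditions $|\Parent_{\tau'}^{-1}(\Root(\tau'))|=1$ and $|\Parent_{\tau'}^{-1}(w)|\geq 1$, connectivity via prepending the root or appending a leaf to the chain from (ii), nonemptiness of $\Leaves(\tau')$ via a maximal-height vertex, and the inherited planar order (with the singleton $\Child_{\tau'}(\Root(\tau'))=\{v'\}$)---is accurate, and you are right that the only step requiring more than definition-chasing is ruling out $\Root(\tau')\in\Leaves(\tau')$, which follows cleanly from the fact that height strictly increases along chains of children while (ii) produces an increasing chain from $v'$ to any $w\in V'$.
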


Note that, in Lemma \ref{xxlem1.9}, condition (i) implies
that each $v_i$ in condition (ii) belongs to $V'$.

Let $\tau$ be a PRT, $v\in \Int(\tau)$, and
$V'=\left(\Int(\tau)\cap\Child^{\infty}(v)\right)\cup\{v\}$.
Then $V'$ satisfies the conditions in Lemma \ref{xxlem1.9}
and thus defines a subtree $\tau'$ of $\tau$. We call this
subtree the \emph{maximal subtree} of $\tau$ rooted at
$\Parent(v)$ and containing internal vertex $v$. Given
$r'\in \Vert(\tau)$ and $v'\in\Int(\tau)\cap {\Child(r')}$,
it is easy to see that there is a unique maximal subtree of
$\tau$ that is rooted at $r'$ and contains $v'$. We denote
it by $\msub_{\tau}(r',v')$.

\subsection{Free ns operads}
\label{xxsec1.3}
Let $\XX$ be an operation alphabet. A \emph{labelling} of
a nontrivial PRT $\tau$ is a map $\x:\Int(\tau)\to\XX$ such
that $\Ar(\x(v))=\Ar(v)=|\Child(v)|$ for all $v\in\Int(\tau)$.
A \emph{nontrivial tree monomial} in $\XX$ is a pair
$T=(\tau,\x)$ of a nontrivial PRT $\tau$ and a labelling $\x$
of $\tau$. Define the tree monomial for the trivial tree to
be the \emph{trivial tree monomial}, denoted by
$1:=(\tau_0,\emptyset)$. The PRT $\tau$ is called the
\emph{underlying tree} of tree monomial $T$, denoted by
$\Tr(T):=\tau$. More generally, if $W$ is a set of tree
monomials, denote $\Tr(W):=\{\tau:\tau=\Tr(T), T\in W\}$.
The \emph{arity} and \emph{weight} of a tree monomial
$T=(\tau,\x)$ are defined as $\Ar(T):=\Ar(\tau)$ and
$\wt(T):=\wt(\tau)$ respectively.
Let $\TM$ denote the set of tree monomials in $\XX$ together
with the trivial tree monomial.

\begin{example}
\label{xxex1.10}
Suppose that $\XX(1)=\{a\}$ and $\XX(2)=\{b,c\}$.
In Figure \ref{xxfig3-xxex1.10} are examples of tree
monomials in $\XX$.
\end{example}

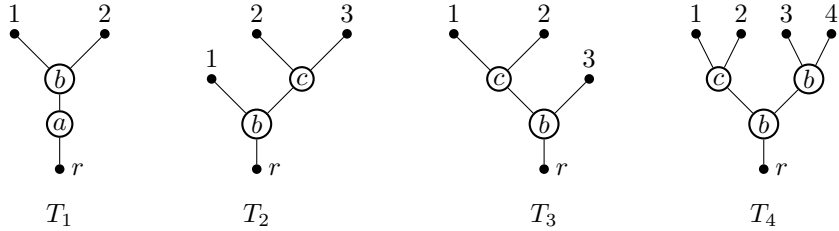
\begin{figure}[h]
\centering
\begin{tikzpicture}[scale=0.6]
\tikzstyle{every node}=[draw,thick,circle,fill=white,minimum size=6pt, inner sep=1pt]
    \node (r) at (0,0) [fill=black,minimum size=0pt,label=right:$r$] {};
    \node (1) at (0,1)  {$a$};
    \node (2) at (0,2) {$b$};
    \node (3a) at (-1,3)  [fill=black,minimum size=0pt,label=above:$1$]{};
    \node (3b) at (1,3)  [fill=black,minimum size=0pt,label=above:$2$]{};
    \draw  (r) -- (1)--(2)--(3a);
    \draw              (2)--(3b);
\node[minimum size=0pt,inner sep=0pt,label=below:$T_1$] (name) at (0,-0.5){};
%Name of the tree
\end{tikzpicture}
\hspace{8mm}
\begin{tikzpicture}[scale=0.6]
  \tikzstyle{every node}=[draw,thick,circle,fill=white,minimum size=6pt, inner sep=1pt]
    \node (r) at (0,0) [fill=black,minimum size=0pt,label=right:$r$] {};
    \node (1) at (0,1)  {$b$};
    \node (2a) at (-1,2) [fill=black,minimum size=0pt,label=above:$1$] {};
    \node (2b) at (1,2) {$c$};
    \node (3a) at (0,3)  [fill=black,minimum size=0pt,label=above:$2$] {};
    \node (3b) at (2,3)  [fill=black,minimum size=0pt,label=above:$3$] {};
    \draw  (r) -- (1)--(2a);
    \draw         (1)--(2b)--(3a);
    \draw              (2b)--(3b);
\node[minimum size=0pt,inner sep=0pt,label=below:$T_2$] (name) at (0,-0.5){};
%Name of the tree
\end{tikzpicture}
\hspace{8mm}
\begin{tikzpicture}[scale=0.6]
  \tikzstyle{every node}=[draw,thick,circle,fill=white,minimum size=6pt, inner sep=1pt]
    \node (r) at (0,0) [fill=black,minimum size=0pt,label=right:$r$] {};
    \node (1) at (0,1)  {$b$};
    \node (2a) at (-1,2) {$c$};
    \node (2b) at (1,2) [fill=black,minimum size=0pt,label=above:$3$]{};
    \node (3a) at (-2,3)  [fill=black,minimum size=0pt,label=above:$1$]{};
    \node (3b) at (-0,3)  [fill=black,minimum size=0pt,label=above:$2$]{};
    \draw  (r) -- (1)--(2a)--(3a);
    \draw         (1)--(2b);
    \draw              (2a)--(3b);
\node[minimum size=0pt,inner sep=0pt,label=below:$T_3$] (name) at (0,-0.5){};
%Name of the tree
\end{tikzpicture}
\hspace{8mm}
\begin{tikzpicture}[scale=0.6]
  \tikzstyle{every node}=[draw,thick,circle,fill=white,minimum size=6pt, inner sep=1pt]
    \node (r) at (0,0) [fill=black,minimum size=0pt,label=right:$r$] {};
    \node (1) at (0,1)  {$b$};
    \node (2a) at (-1,2) {$c$};
    \node (2b) at (1,2) {$b$};
    \node (3a) at (-1.5,3)  [fill=black,minimum size=0pt,label=above:$1$]{};
    \node (3b) at (-0.5,3)  [fill=black,minimum size=0pt,label=above:$2$]{};
    \node (3c) at (0.5,3)  [fill=black,minimum size=0pt,label=above:$3$]{};
    \node (3d) at (1.5,3)  [fill=black,minimum size=0pt,label=above:$4$]{};
    \draw  (r) -- (1)--(2a)--(3a);
    \draw         (1)--(2b)--(3c);
    \draw              (2a)--(3b);
    \draw              (2b)--(3d);
\node[minimum size=0pt,inner sep=0pt,label=below:$T_4$] (name) at (0,-0.5){};
%Name of the tree
\end{tikzpicture}
\caption{Tree monomials}\label{xxfig3-xxex1.10}
\end{figure}

\begin{definition}
\label{xxdef1.11}
Suppose $T=(\tau,\x)\in\TM$ and $\tau'$ is a subtree of
$\tau$. Then the tree monomial $T':=(\tau',\x|_{\Int(\tau')})$
is called a \emph{submonomial} of $T$. The tree monomial $T$
is said \emph{divisible} by $T_1=(\tau_1,\x_1)\in\TM$ if
$\tau$ contains a subtree $\tau_1'$ isomorphic to $\tau_1$
via $\phi:\tau_1'\to \tau_1$ and $\x(v)=\x_1(\phi(v))$ for
all $v\in\Int(\tau_1')$.
\end{definition}

A \emph{tree polynomial} in $\XX$ with coefficients in $\FF$
is an $\FF$-linear combination of tree monomials of the
same arity. The \emph{support} of a tree polynomial $f$,
denoted by $\Supp(f)$, is the set of all tree monomials
that appear in $f$ with nonzero coefficients. The arity
of $f$ is defined to be the arity of a tree monomial in
$\Supp(f)$. Denote the vector space of all tree polynomials
of arity $n\geq 1$ by $\TX(n)$ and $\TX(0)=0$. (Note that we
only consider $\XX$ with $\XX(0)=\emptyset$ in this paper.) Let
$\TX:=\{\TX(n)\}_{n\geq0}$.
In order to make $\TX$ an operad, we need to define
compositions of tree polynomials. We first define the
graftings of PRTs.

\begin{definition} \cite[Definition 3.3.3.2]{BD16}
\label{xxdef1.12}
Suppose that $\tau_1$ and $\tau_2$ are PRTs.
Let $l\in\Leaves(\tau_1)=\{1,2,\dots,\Ar(\tau_1)\}$.
We define a PRT $\tau_1\circ_l\tau_2$, called the result of
\emph{partial grafting} of $\tau_2$ to $\tau_1$ at $l$, as follows:
\begin{align*}
\Root(\tau_1\circ_l\tau_2)&:=\Root(\tau_1),\\
\Int(\tau_1\circ_l\tau_2)&:=\Int(\tau_1)\sqcup\Int(\tau_2),\\
\Leaves(\tau_1\circ_l\tau_2)&
:=\Leaves(\tau_1)\sqcup\Leaves(\tau_2)\setminus\{l\};
\end{align*}
the parent function and the planar structure on
$\tau_1\circ_l\tau_2$ are induced respectively
by the respective parent functions and planar structures
of $\tau_1$ and $\tau_2$ with two exceptions: for the
only vertex $v\in\Child_{\tau_2}(\Root(\tau_2))$, define
$\Parent_{\tau_1\circ_l\tau_2}(v):=\Parent_{\tau_1}(l)$;
the total order needed by the planar structure puts $v$
in the place of $l$.
\end{definition}

Partial graftings of PRTs induce partial compositions of
tree monomials.

\begin{definition}
\label{xxdef1.13}
Given two tree monomials $T_1=(\tau_1,\x_1)$ and
$T_2=(\tau_2,\x_2)$, we define the \emph{partial
composition} $T_1\circ_lT_2$ for $1\leq l\leq \Ar(T_1)$
to be the tree monomial $T=(\tau,\x)$,
where $\tau=\tau_1\circ_l\tau_2$ and
\[
\x(v):=
\begin{cases}
    \x_1(v)&v\in\Int(\tau_1),\\
    \x_2(v)&v\in\Int(\tau_2).
\end{cases}
\]
\end{definition}

\begin{example}
\label{xxex1.14}
Let $T_1$ and $T_2$ be the same as in Example \ref{xxex1.10},
see Figure \ref{xxfig3-xxex1.10}. All possible compositions
$T_1\circ_l T_2$ are demonstrated in Figure
\ref{xxfig4-xxex1.14}.
\end{example}

\begin{figure}[h]
\centering
\begin{tikzpicture}[scale=0.6]
  \tikzstyle{every node}=[draw,thick,circle,fill=white,minimum size=6pt, inner sep=1pt]
    \node (r) at (0,0) [fill=black,minimum size=0pt,label=right:$r$] {};
    \node (1) at (0,1)  {$a$};
    \node (2) at (0,2) {$b$};
    \node (3a) at (-1,3)  {$b$};
    \node (3aa) at (-2,4) [fill=black,minimum size=0pt,label=above:$1$]{};
    \node (3ab) at (0,4) {$c$};
    \node (3aba) at (-1,5) [fill=black,minimum size=0pt,label=above:$2$]{};
    \node (3abb) at (1,5) [fill=black,minimum size=0pt,label=above:$3$]{};
    \node (3b) at (1,3)  [fill=black,minimum size=0pt,label=above:$4$]{};
    \draw  (r) -- (1)--(2)--(3a)--(3aa);
    \draw                   (3a)--(3ab)--(3aba);
    \draw                         (3ab)--(3abb);
    \draw              (2)--(3b);
\node[minimum size=0pt,inner sep=0pt,label=below:$T_1\circ_1 T_2$] (name) at (0,-0.5){};
%Name of the tree
\end{tikzpicture}
\hspace{15mm}
\begin{tikzpicture}[scale=0.6]
  \tikzstyle{every node}=[draw,thick,circle,fill=white,minimum size=6pt, inner sep=1pt]
    \node (r) at (0,0) [fill=black,minimum size=0pt,label=right:$r$] {};
    \node (1) at (0,1)  {$a$};
    \node (2) at (0,2) {$b$};
    \node (3a) at (-1,3)  [fill=black,minimum size=0pt,label=above:$1$]{};
    \node (3b) at (1,3)  {$b$};
    \node (3ba) at (0,4)  [fill=black,minimum size=0pt,label=above:$2$]{};
    \node (3bb) at (2,4) {$c$};
    \node (3bba) at (1,5) [fill=black,minimum size=0pt,label=above:$3$]{};
    \node (3bbb) at (3,5) [fill=black,minimum size=0pt,label=above:$4$]{};
    \draw  (r) -- (1)--(2)--(3a);
    \draw              (2)--(3b)--(3ba);
    \draw                   (3b)--(3bb)--(3bba);
    \draw                         (3bb)--(3bbb);
    \draw              (2)--(3b)--(3ba);
\node[minimum size=0pt,inner sep=0pt,label=below:$T_1\circ_2 T_2$] (name) at (0,-0.5){};
%Name of the tree
\end{tikzpicture}
\caption{Partial compositions of tree monomials}\label{xxfig4-xxex1.14}
\end{figure}
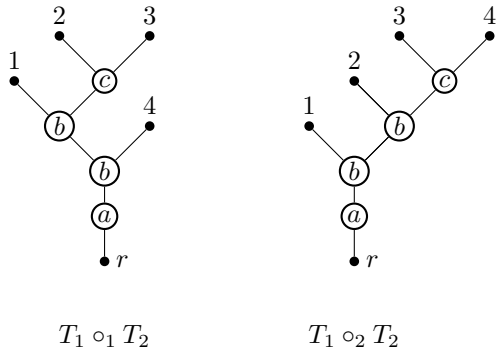

Extending compositions of tree monomials by multilinearity
to the collection $\TX$ gives \emph{partial compositions}
of tree polynomials:
\[
\circ_i:\TX(n)\otimes \TX(m)\to \TX(n+m-1),
\alpha\otimes \beta\mapsto\alpha\circ_i\beta,\
1\leq i\leq n.
\]

The following lemma is easy to prove.

\begin{lemma}
\label{xxlem1.15}
Equipped with the partial compositions defined above,
$\TX$ is the \emph{free reduced ns operad generated by $\XX$}.
\end{lemma}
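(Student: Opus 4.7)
The plan is to verify that $\TX$ equipped with the stated compositions is a reduced ns operad and to establish its universal property as free on $\XX$.

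First I would check the three operad axioms on tree monomials, after which bilinearity extends them to all of $\TX$. The unit axiom is immediate from Definition \ref{xxdef1.12}: grafting the trivial tree at a leaf of any PRT $\tau$, or grafting any $\tau$ onto the trivial tree, leaves the underlying PRT and its labelling unchanged. For the sequential and parallel axioms, the crucial observation is that partial grafting is essentially a disjoint union of the internal-vertex sets, with the parent function and planar order adjusted only locally at the grafted leaf; consequently, both sides of each axiom produce the same underlying PRT, and the index shifts in \eqref{E1.1.1} and \eqref{E1.1.2} match exactly the reindexing of leaves produced by the earlier grafting. The labellings, being disjoint functions on internal vertices, carry over trivially. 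Reducedness is clear since $\TX(0) = 0$ by definition.

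For the universal property, let $\PP$ be a ns operad and $f: \XX \to \PP$ any arity-preserving map. I would define $\tilde f: \TM \to \PP$ by induction on weight and extend $\FF$-linearly. Set $\tilde f(1) := \id_{\PP}$; for a corolla $C_x$ (a single-internal-vertex tree labelled by $x \in \XX(n)$ with $n$ leaves), set $\tilde f(C_x) := f(x)$; for a tree monomial $T$ of weight $\geq 2$ with root-child $v$ and label $x := \x(v)$, locate the children of $v$ that are themselves internal vertices, say in leaf order $v_{l_1}, \dots, v_{l_k}$, form the subtrees $S_j := \msub_T(v, v_{l_j})$ (each of strictly smaller weight than $T$), and define $\tilde f(T)$ as the element of $\PP$ obtained by grafting $\tilde f(S_1), \dots, \tilde f(S_k)$ into $f(x) \in \PP(n)$ at positions $l_1, \dots, l_k$ (with the appropriate reindexing) using the partial compositions in $\PP$.

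The main obstacle is proving that $\tilde f$ is a well-defined operad morphism. Well-definedness of the inductive formula requires that the order in which the $\tilde f(S_j)$ are grafted into $f(x)$ does not affect the result, which is exactly what the parallel axiom \eqref{E1.1.2} in $\PP$ guarantees for graftings into pairwise-distinct leaves of a common vertex. The morphism property $\tilde f(T \circ_l T') = \tilde f(T) \circ_l \tilde f(T')$ I would establish by induction on $\wt(T')$: the base case $T' = 1$ is the unit axiom in $\PP$, and the inductive step decomposes $T'$ at its root-child and applies the sequential axiom \eqref{E1.1.1} together with the induction hypothesis. Uniqueness of $\tilde f$ is then immediate, since every nontrivial tree monomial is built from corollas and $1$ under partial compositions, so any operad morphism extending $f$ must coincide with $\tilde f$ on $\TM$, and hence on all of $\TX$ by linearity.
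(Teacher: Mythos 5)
Your overall approach --- verify the operad axioms, then establish the universal property by an explicit recursive construction of $\tilde{f}$ --- is the standard one, and the paper states this lemma without proof (``easy to prove''), so there is no in-paper argument to compare against. The axiom verification, the well-definedness of $\tilde{f}$ via the parallel axiom, and the uniqueness argument are all sound.

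There is, however, a gap in the verification of the morphism property $\tilde{f}(T \circ_l T') = \tilde{f}(T) \circ_l \tilde{f}(T')$. You propose induction on $\wt(T')$, with base case $T' = 1$ handled by the unit axiom and inductive step ``decompose $T'$ at its root-child and apply the sequential axiom together with the induction hypothesis.'' This works when $\wt(T')\geq 2$: one can write $T' = T''\circ_p S'$ with both $\wt(T'')$ and $\wt(S')$ strictly smaller than $\wt(T')$, move parentheses via the sequential axiom in $\TX$ and in $\PP$, and invoke the induction hypothesis. But when $\wt(T') = 1$, i.e.\ $T'$ is a corolla $C_y$, the decomposition at its root-child is vacuous --- there are no proper subtrees to peel off --- so the inductive step has nothing to act on, and the $T'=1$ base case does not cover it. The identity $\tilde{f}(T \circ_l C_y) = \tilde{f}(T) \circ_l f(y)$ is a genuine claim: its left side is computed by decomposing $T \circ_l C_y$ at \emph{its} root-child, which requires unwinding the structure of $T$, whereas the right side post-composes $\tilde{f}(T)$ with a single generator.

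The clean repair is to induct on $\wt(T)$ instead. With $T=1$ as base (unit axiom again), the inductive step decomposes $T$ at its root-child with subtrees $S_1,\dots,S_k$; if the leaf $l$ sits directly under the root-child, the desired equality is a pure instance of the parallel axiom \eqref{E1.1.2} in $\PP$, while if $l$ lies inside some $S_j$, then $T\circ_l T'$ has the same root-corolla with $S_j$ replaced by $S_j \circ_{l'} T'$, the induction hypothesis applies to $S_j$ (of strictly smaller weight), and the sequential and parallel axioms re-associate to finish. This handles all $T'$ simultaneously, including the corolla case. Alternatively, keep your induction on $\wt(T')$ but insert a separate lemma for the corolla case $T'=C_y$, proved by induction on $\wt(T)$ as just described.
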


An \emph{ideal} $\II$ of an operad $\PP$ is a subcollection
of $\PP$ such that each composition $f\circ_i g$ belongs to
$\II$ whenever $f$ or $g$ belongs to $\II$. Suppose
$\mathcal{S}$ is a subcollection of $\PP$. The
\emph{ideal of $\PP$ generated by $\mathcal{S}$}, denoted
by $(\mathcal{S})$, is the smallest (by inclusion) ideal of
$\PP$ containing $\mathcal{S}$.

We are ready now to define a presentation of an operad by
generators and relations.

\begin{definition}
\label{xxdef1.16}
Suppose that an operad $\PP$ is the quotient of the free
operad $\TX$ by some ideal $\II$, and that $\II$ is generated
by a subcollection $\mathcal{R}\subset \II$. We say that the
operad $\PP$ is \emph{presented by generators $\XX$ and
relations $\mathcal{R}$}. We call $\PP$ \emph{finitely generated}
(resp. \emph{finitely presented})
if $\PP$ can be presented by a finite set $\XX$ of generators,
i.e., $\sqcup_{n\geq0} \XX(n)$ is a finite set
(resp. by a finite set $\XX$ of generators and a finite
dimensional subcollection $\mathcal{R}$ of relations).
\end{definition}

\section{\gsbs\ of ns operads}
\label{xxsec2}
In this section, we follow the ideas in \cite[Chapter 3]{BD16}
to introduce \gsb\ (also known as \gb) theory for ns operads.
Similarly to the case of associative algebras, the \gsb\
method is useful for the computation of GK-dimension of an
operad.

Let $\XX$ be an operation alphabet and $\XX^*$ be the free
monoid generated by $\XX$. Recall that a total order $>$ on
$\XX^*$ is called a \emph{monomial order} on $\XX^*$ if $>$
is a well-order and $u_{1}>u_{2}$ implies
$u_{1}u_{3}>u_{2}u_{3}$ and $u_{3}u_{1}>u_{3}u_{2}$ for all
$u_{1},u_{2},u_{3}\in\XX^*$.

A collection of total orders $\succ_n$ of $\TM(n)$, $n\geq0$,
is called a \emph{monomial order} on $\TM$ if the following
conditions are satisfied:
\begin{enumerate}[(i)]
\item
each $\succ_n$ is a well-order;
\item
each partial composition is a strictly increasing function
in each of its arguments, i.e., if $T_0,T_0'\in \TM(m)$,
$T_1,T_1'\in \TM(n)$, $1\leq i\leq m$, then
\begin{align*}
T_0\circ_i T_1\succ_{m+n-1} T_0'\circ_i T_1&\text{ if }
    T_0\succ_m T_0',\\
T_0\circ_i T_1\succ_{m+n-1} T_0\circ_i T_1'&\text{ if }
    T_1\succ_n T_1'.
\end{align*}
\end{enumerate}

Now we introduce a monomial order on $\TM$. Let $T=(\tau, x)$
be a tree monomial. For each leaf $l$ of $\tau$, we record
the labels of internal vertices of the path from the root to
$l$, forming a word in alphabet $\XX$. The sequence of these
words, ordered by the planar structure on $\Leaves(\tau)$, is
called the \emph{path sequence} of the tree monomial $T$,
denoted by $\Path(T)$.

\begin{example}
\label{xxex2.1}
The path sequences of the tree monomials in Figure
\ref{xxfig3-xxex1.10} are
\begin{gather*}
    \Path(T_1)=(ab,ab), \quad
    \Path(T_2)=(b, bc, bc),\\
    \Path(T_3)=(bc,bc,b), \quad
    \Path(T_4)=(bc,bc,bb,bb).
\end{gather*}
\end{example}

It is not difficult to prove that a tree monomial is uniquely
determined by its path sequence, see \cite[Lemma 3.4.1.4]{BD16}.
Given a monomial order $>$ on $\XX^*$, we define an order
(still denoted by $>$, called the \emph{path extension} of
the monomial order on $\XX^*$) on $\TM$ by using path sequences
of tree monomials. Suppose $T,T'\in\TM$, $\Path(T)=(u_1,u_2,\dots,u_m)$
and $\Path(T')=(u'_1,u'_2,\dots,u'_n)$. We say $T>T'$ if either
$m>n$, or $m=n$ and there exists $1\leq i\leq m$ such that
$u_1=u'_1,u_2=u_2',\dots,u_{i-1}=u'_{i-1}, u_i>u_i'$.

The following lemma is copied from \cite[Proposition 3.4.1.6]{BD16}.

\begin{lemma}
\label{xxlem2.2}
Suppose $\XX=\sqcup_{n\geq 1} \XX(n)$, namely, $\XX(0)=\emptyset$.
The path extension of a monomial order on $\XX^*$ is a monomial
order on $\TM$.
\end{lemma}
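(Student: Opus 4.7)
The plan is to verify the two clauses in the definition of a monomial order on $\TM$: that each $\succ_n$ (the restriction of the path extension $>$ to $\TM(n)$) is a well-order, and that each partial composition is strictly increasing in each argument.

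For the well-order property, note first that two elements of $\TM(n)$ have path sequences of the same length $n$, so the length-first clause in the definition of the path extension never triggers between them, and $\succ_n$ is exactly the pullback along $\Path$ of the lex order on $(\XX^*)^n$. By \cite[Proposition 3.4.1.4]{BD16}, $\Path$ is injective on $\TM$ — and this is precisely where the hypothesis $\XX(0)=\emptyset$ enters, since an internal vertex of arity zero would lie on no root-to-leaf path and hence be invisible to the path sequence, in line with Remark \ref{xxrem1.6}. Granting injectivity, and using the standard fact that the lex order on a finite product of copies of a well-order is again a well-order, one concludes that $\succ_n$ is a well-order on $\TM(n)$.

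The heart of the argument is compatibility with partial compositions. The key step is to read off Definition \ref{xxdef1.12} the explicit formula
\[
\Path(T_0\circ_iT_1)=(u_1,\dots,u_{i-1},\,u_iv_1,\,u_iv_2,\dots,u_iv_m,\,u_{i+1},\dots,u_n),
\]
where $\Path(T_0)=(u_1,\dots,u_n)$ and $\Path(T_1)=(v_1,\dots,v_m)$: each surviving leaf of $T_1$ becomes a leaf of the composite whose root-to-leaf word is the concatenation of the root-to-leaf-$i$ word $u_i$ of $T_0$ with the corresponding root-to-leaf word of $T_1$. With this formula, strictness in the first argument follows by a case analysis on the position $j$ where $\Path(T_0)$ and $\Path(T_0')$ first disagree: if $j\neq i$, the first disagreement of the composite path sequences occurs at the position corresponding to $j$ with identical left context; if $j=i$, it is $u_iv_1$ versus $u_i'v_1$, and $u_i>u_i'$ forces $u_iv_1>u_i'v_1$ by right-compatibility of the monomial order on $\XX^*$. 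Strictness in the second argument splits two ways: if $\Ar(T_1)>\Ar(T_1')$ the composite arities differ and the length-first rule applies, while if the arities coincide with first disagreement at position $k$ (so $v_k>v_k'$), the composite sequences first disagree at position $i+k-1$ with entries $u_iv_k$ versus $u_iv_k'$, handled by left-compatibility.

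I do not anticipate any real obstacle here: once the path-sequence formula for a composition is correctly written down, the rest is routine bookkeeping, and the hypothesis $\XX(0)=\emptyset$ is used only to secure the injectivity of $\Path$.
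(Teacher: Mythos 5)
The paper does not give its own argument for this lemma --- it states that it is ``copied from \cite[Proposition 3.4.1.6]{BD16}'' --- so there is no in-paper proof to compare against. Your proof is correct and follows what is essentially the canonical route: establish the explicit formula
\[
\Path(T_0\circ_i T_1)=(u_1,\dots,u_{i-1},\,u_iv_1,\dots,u_iv_m,\,u_{i+1},\dots,u_n)
\]
from Definitions \ref{xxdef1.12}--\ref{xxdef1.13}, reduce $\succ_n$ to the pullback of the lex order on $(\XX^*)^n$ via the injectivity of $\Path$ from \cite[Lemma 3.4.1.4]{BD16}, and then read off strict monotonicity from left- and right-compatibility of the monomial order on $\XX^*$ by locating the first disagreement in each case. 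The case analysis is sound.

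Two small remarks. First, the branch of your second-argument analysis with $\Ar(T_1)\neq\Ar(T_1')$ is vacuous: the definition of a monomial order on $\TM$ compares $T_1,T_1'$ lying in the \emph{same} $\TM(n)$, so the length-first clause of the path extension never activates anywhere in the verification. Second, your explanation of the role of $\XX(0)=\emptyset$ --- that an arity-zero internal vertex escapes the path sequence and breaks injectivity of $\Path$ --- is the right phenomenon, but it operates in the convention of \cite{BD16}, which allows such vertices. In the present paper's own convention (Remark \ref{xxrem1.6}: internal vertices always have positive arity) the path map is injective automatically, and $\XX(0)=\emptyset$ is a standing assumption restated here mainly to flag the discrepancy with \cite{BD16}. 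Neither point affects the correctness of your proof.
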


Note that the above statement does not hold if we allow internal
vertices of arity zero in a PRT. See the following example.

\begin{example}
\label{xxex2.3}
Let $\XX=\{a,b\}$ with $a\in\XX(2)$ and $b\in\XX(0)$.
\begin{figure}[h]
		\centering
		\begin{tikzpicture}[scale=0.6]
			\tikzstyle{every node}=[draw,thick,circle,fill=white,minimum size=6pt, inner sep=1pt]
			\node (r) at (0,0) [fill=black,minimum size=0pt,label=right:$r$] {};
			\node (2) at (0,1) {$a$};
			\node (3a) at (-1,2)  {$b$};
			\node (3b) at (1,2)  [fill=black,minimum size=0pt,label=above:$1$]{};
			\draw  (r) --(2)--(3a);
			\draw              (2)--(3b);
			\node[minimum size=0pt,inner sep=0pt,label=below:$T_1$] (name) at (0,-0.5){};
			%Name of the tree
		\end{tikzpicture}
		\hspace{15mm}
		\begin{tikzpicture}[scale=0.6]
			\tikzstyle{every node}=[draw,thick,circle,fill=white,minimum size=6pt, inner sep=1pt]
			\node (r) at (0,0) [fill=black,minimum size=0pt,label=right:$r$] {};
			\node (2) at (0,1) {$a$};
			\node (3a) at (-1,2)  [fill=black,minimum size=0pt,label=above:$1$]{};
			\node (3b) at (1,2)  {$b$};
			\draw  (r) --(2)--(3a);
			\draw              (2)--(3b);
			\node[minimum size=0pt,inner sep=0pt,label=below:$T_2$] (name) at (0,-0.5){};
			%Name of the tree
		\end{tikzpicture}
	\caption{Tree monomials with $\XX(0)\neq \emptyset$}\label{xxfig5-xxex2.3}
	\end{figure}
Consider the tree monomials $T_1$ and $T_2$ in Figure \ref{xxfig5-xxex2.3}.
The path sequences of the tree monomials are
	\begin{gather*}
		\Path(T_1)=(ab,a), \quad
		\Path(T_2)=(a,ab ).
	\end{gather*}
Under the path extension of a degree-lexicographic order on $\XX^*$, we
have $T_1>T_2$. However, if we denote $T_3$ the tree monomial with only
one internal vertex labeled by $b$, then the compositions
$T_1\circ_1 T_3$ and $T_2\circ_1 T_3$ are equal, which shows that the
path extension is not a monomial order.
\end{example}

Fix a monomial order on $\TM$. Suppose
\[
g=a_1U_1+a_2U_2+\dots+a_nU_n\in \TX
\]
where $n\geq 1$, each $a_i\in \FF^*$, $U_i\in \TM$ and
$U_1>U_2>\cdots> U_n$. Then $U_1$ (respectively, $a_1$, $a_1U_1$)
is called the \emph{leading monomial} (respectively,
\emph{leading coefficient}, \emph{leading term}) of $g$,
denoted by $\lm{g}$ (respectively, $\lc(g)$, $\lt(g)$).
We say $g$ is \emph{monic} if $\lc(g)=1$.

Suppose $\GG\subseteq \TX$. Let $\lm{\GG}:=\{\lm g: g\in \GG\}$
and
\[
\Irr(\GG):=\{u\in \TM: u \text{ is not divisible by }\lm{g},\
\forall g\in \GG\}.
\]
A tree polynomial $f$ is \emph{reduced} with respect to
$\GG$ if $\Supp(f)\subseteq \Irr(\GG)$. We say $\GG$ is
\emph{self-reduced} if, for all $g\in \GG$, $g$ is monic
and reduced with respect to $\GG\setminus\{g\}$.

Note that, for an ideal $\II$ of $\TX$, the vector space
${\FF}\lm{\II}$ spanned by the leading monomials $\lm{\II}$
is also an ideal (see \cite[Proposition 3.4.3.1]{BD16}),
which is exactly the ideal generated by $\lm{\II}$,
i.e., $(\lm{\II})={\FF}\lm{\II}$.

\begin{definition}
\label{xxdef2.4}
Let $\II$ be an ideal of $\TX$. A subcollection $\GG$ of
$\II$ is called a \emph{\gsb} for $\II$ (or for the quotient
operad $\TX/\II$) if the ideal generated by $\lm{\GG}$
coincides with that generated by $\lm{\II}$, i.e.,
$(\lm{\GG})=(\lm{\II})$.
\end{definition}

It is clear that an ideal $\II\unlhd \TX$ is a \gsb\ for
$\II$.

\begin{proposition} \cite[Proposition 3.4.3.4]{BD16}
\label{xxpro2.5}
Let $\II$ be an ideal of $\TX$ and $\GG\subseteq \II$.
Then $\GG$ is a {\gsb} for $\II$  if and only if
$\Irr(\GG)$ forms an $\FF$-basis for the quotient $\TX/\II$.
\end{proposition}

%%%%%%%%%%%%%%%
\section{Gelfand-Kirillov dimension}
\label{xxsec3}
We refer to \cite{KL00} for basics about the Gelfand-Kirillov
dimension of associative algebras. The Gelfand-Kirillov
dimension of a locally finite operad is defined in
\cite[p.400]{KP15} and \cite[Definition 4.1]{BYZ20}.

\begin{definition} \cite[p.400]{KP15} \cite[Definition 4.1]{BYZ20}
\label{xxdef3.1}
Let $\PP$ be a locally finite operad. The
\emph{Gelfand-Kirillov dimension}, or \emph{GK-dimension}
for short, of $\PP$ is defined to be
\[
\GK(\PP):=\limsup_{n\to\infty}\log_n\left(\sum_{i=0}^n\dim \PP(i)\right)
\]
where $\dim$ stands for $\dim_{\FF}$.
\end{definition}

When we talk about the GK-dimension of an operad $\PP$, we
always implicitly assume that $\PP$ is locally finite. By
Lemma \ref{xxlem3.2} below, we might only consider the
GK-dimension of reduced (or reduced connected) operads from
now on.

Given two subcollections $\VV$ and $\WW$ of $\PP$, let
$\VV \circ \WW$ be the subcollection of $\PP$ spanned by
all elements of the form $v \circ_i w$ for $v\in \VV$,
$w\in \WW$ and $1\leq i\leq \Ar(v)$. Given a subcollection
$\VV$ of $\PP$, let $\VV^0=(0,\FF,0,0,\dots)$, and
inductively, let $\VV^m=\VV^{m-1}\circ \VV$ for $m\geq1$.
It is clear that $\VV^m=\left\{\VV^m(n)\right\}_{n\geq 0}$
where $\VV^m(n)$ is the subspace of $\PP(n)$ spanned by all
elements of arity $n$ that have the following form
\begin{align}
\label{E3.1.1}\tag{E3.1.1}
((\cdots((a_{1}\circ_{j_1}a_{2})\circ_{j_2}a_{3})
\circ_{j_3}\cdots )\circ_{j_{m-1}}a_m),
\text{ each } a_{i}\in \VV.
\end{align}
We call $\VV$ a \emph{generating subcollection} of $\PP$ if
\[
\PP=\sum_{m\geq0}\VV^m :=\left\{\sum_{m\geq0}\VV^m(n)\right\}_{n\geq0}.
\]

We say $\PP$ is a {\it finitely generated} operad with a
finite generating alphabet $\XX$, if $\PP$ has a finite
dimensional generating subcollection $\VV=\{\VV(n)\}_{n\geq0}$
where $\VV(n)$ is the space spanned by $\XX(n)$.

The following lemma is easy to prove and its proof is omitted.

\begin{lemma}
\label{xxlem3.2}
Let $\PP$ be a finitely generated locally finite operad.
\begin{enumerate}
\item[(1)]
We define a reduced operad associated with $\PP$:
\[
\PP_{r}:=(0,\PP(1),\PP(2),\PP(3),\dots).
\]
Then $\PP_r$ is finitely generated and locally finite.
\item[(2)]
We define a reduced connected operad
\[
\PP_{rc}:=(0,\FF,\PP(2),\PP(3),\dots).
\]
Then $\PP_{rc}$ is finitely generated and locally finite.
\item[(3)]
$\GK(\PP_{r})=\GK(\PP_{rc})=\GK(\PP)$.
\end{enumerate}
\end{lemma}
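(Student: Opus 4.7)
Part (3) is a bookkeeping exercise. By local finiteness of $\PP$, the sums
\[
\sum_{i=0}^n\dim\PP_r(i) = \sum_{i=0}^n\dim\PP(i) - \dim\PP(0)
\]
and
\[
\sum_{i=0}^n\dim\PP_{rc}(i) = \sum_{i=0}^n\dim\PP(i) - \dim\PP(0) - \dim\PP(1) + 1
\]
differ from $\sum_{i=0}^n\dim\PP(i)$ by additive constants, which are absorbed by the $\limsup_n\log_n$ operation; hence $\GK(\PP_r)=\GK(\PP_{rc})=\GK(\PP)$. Local finiteness of $\PP_r$ and $\PP_{rc}$ is immediate, and both are easily seen to be suboperads of $\PP$: for $\PP_{rc}$, the composition of two arity-$\geq 2$ elements has arity $\geq 3$, and compositions with $\id$ are trivial.

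For Part (1), let $\VV\subseteq\PP$ be a finite-dimensional generating subcollection. The plan is to define $\WW\subseteq\PP_r$ as the span of all elements of the form $a\circ_{i_1}u_1\circ_{i_2}u_2\cdots\circ_{i_k}u_k$, where $a$ ranges over a basis of $\VV$, each $u_r$ is drawn from a fixed basis of $\PP(0)$, and the resulting arity is at least $1$. By the parallel axiom \eqref{E1.1.2} the order of the arity-$0$ insertions is irrelevant, and by local finiteness of $\PP$ there are finitely many choices for each $a$; so $\WW$ is finite-dimensional. To show $\WW$ generates $\PP_r$, I would present any $\alpha\in\PP_r(n)$ as a sum of tree compositions of $\VV$-elements and run a bottom-up absorption: each internal vertex labelled by an arity-$0$ element has no children and is folded into its parent by replacing the parent's label $a$ with $a\circ_i w$; if a whole subtree collapses to an arity-$0$ element of $\PP(0)$ during this process, expand that element in the basis of $\PP(0)$ and absorb one level higher. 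After termination, every internal label lies in $\WW$.

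Part (2) follows the same template. Now I would also allow arity-$1$ insertions in the definition of $\WW$ (drawn from a basis of $\PP(1)$), require the resulting arity to be at least $2$, and include $\id\in\PP_{rc}(1)$ in $\WW$. The absorption additionally folds arity-$1$ internal vertices into their parents via the sequential axiom \eqref{E1.1.1}: if $v$ has arity-$1$ label $u$ and single child $c$, replace the parent's label $a$ by $a\circ_i u$ and promote $c$ to a child of the parent at position $i$. If after this the root carries an arity-$1$ label with a unique child, use the sequential axiom once more to push the root's label into that child. The terminal tree represents $\alpha$ as a composition of $\WW$-labels of arity $\geq 2$ (or the identity), proving $\WW$ generates $\PP_{rc}$.

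The main thing to verify carefully is that the absorption procedure is well-defined independently of the order of absorptions and that it terminates; this rests on the operad axioms \eqref{E1.1.1}--\eqref{E1.1.2} together with local finiteness of $\PP$, which bounds the dimensions of $\PP(0)$ and $\PP(1)$ and hence the size of the absorption pool. Everything else is routine.
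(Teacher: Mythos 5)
Part (3) is fine: for $n\geq 1$ the partial sums of dimensions for $\PP$, $\PP_r$, $\PP_{rc}$ differ only by additive constants, and $\limsup_n\log_n$ is insensitive to such shifts whether or not the sums stay bounded. For Part (1) your absorption scheme is sound: $\WW$ is finite-dimensional because $\XX$ is finite and $\PP(0)$ is finite-dimensional, each fold strictly reduces the number of internal vertices so the process terminates, and the ``expand in a basis of $\PP(0)$ when a subtree collapses'' step keeps everything inside the span of the $a\circ_{i_1}u_1\cdots\circ_{i_k}u_k$'s. The paper omits its proof, so there is nothing to compare against; the absorption route is the natural one.

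In Part (2), however, there is a real gap. Folding a non-root arity-$1$ vertex upward produces a label $a\circ_{i_1}u_1\cdots$ lying in your $\WW$, but ``pushing the root's arity-$1$ label $u$ down into its child'' produces the label $u\circ_1 v$ with $u\in\PP(1)$ on the \emph{outside}, and in general this is not in the span of $\{a\circ_{i_1}u_1\cdots : a\in\XX,\ u_r\ \text{a basis element of}\ \PP(0)\cup\PP(1)\}$. Concretely, let $\XX=\{a_1,a_3\}$ with $\Ar(a_1)=1$, $\Ar(a_3)=2$, and impose $a_1\circ_1 a_1=a_3\circ_1 a_3=a_3\circ_2 a_3=a_3\circ_1 a_1=a_3\circ_2 a_1=0$. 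Then $\PP$ is finitely generated, finite-dimensional (hence locally finite), $\PP_{rc}(2)$ has basis $\{a_3,\ a_1\circ_1 a_3\}$, and your $\WW(2)=\FF a_3$ because $a_3\circ_1 a_1=a_3\circ_2 a_1=0$. Since $\WW(0)=0$ and $\WW(1)=\FF\id$, every iterated composition of $\WW$-elements of total arity $2$ is a scalar multiple of an element of $\WW(2)$, so $a_1\circ_1 a_3$ is not generated: your $\WW$ fails to generate $\PP_{rc}$. The fix is small: enlarge $\WW$ to also contain the precompositions $e\circ_1(a\circ_{i_1}u_1\cdots\circ_{i_k}u_k)$, with $e$ ranging over a fixed basis of $\PP(1)$, $a\in\XX$, $u_r$ running over a basis of $\PP(0)\cup\PP(1)$, and resulting arity $\geq 2$, together with $\id$. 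This $\WW$ is still finite-dimensional by local finiteness; after repeated root push-downs the root label is $u_1\circ_1\cdots\circ_1 u_p\circ_1 v$, and since $\PP(1)$ is a finite-dimensional unital algebra under $\circ_1$ the left factor expands in the chosen basis of $\PP(1)$, landing in the enlarged $\WW$. With that correction the rest of your argument goes through.
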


The following lemma gives a characterization of the
GK-dimension of a finitely generated operad.

\begin{lemma}
\label{xxlem3.3}
Suppose $\PP$ is a locally finite operad generated
by a finite dimensional subcollection $\VV$.
Then
\[
\GK(\PP)=\limsup_{n\to\infty}\log_n
\left(\dim\left(\sum_{i=0}^n \VV^i\right)\right).
\]
\end{lemma}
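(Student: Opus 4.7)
The argument is the operadic analogue of the standard GK-dimension computation for finitely generated associative algebras, with words in generators replaced by compositions in $\VV$ and with $1$ replaced by $\id$. The two crucial ingredients are arity additivity $\Ar(\alpha\circ_i\beta)=\Ar(\alpha)+\Ar(\beta)-1$ for partial compositions and the neutrality of $\id$, which makes the filtration $\{\VV^m\}_m$ monotone.

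First I would adjoin $\id$ to $\VV$ (it is still finite dimensional and still generating) and then invoke Lemma \ref{xxlem3.2} to reduce to the case in which $\PP$ is reduced connected and $\VV$ may be assumed to satisfy $\id\in\VV$, $\VV(0)=0$, and $\VV(1)\subseteq\FF\id$; the discarded arity-$0$ and non-identity arity-$1$ generators contribute only to the finite-dimensional $\PP(0)\oplus\PP(1)$, hence affect $\dim\sum_i\VV^i$ only by a bounded number of stabilisation steps in the finite-dimensional algebra $\PP(1)$. Under this setup $\alpha=\alpha\circ_i\id$ forces $\VV^{m-1}\subseteq\VV^m$ for all $m\geq 1$, so $\sum_{i=0}^n\VV^i=\VV^n$. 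Setting $k:=\max\{\Ar(v):v\in\VV\}<\infty$ and letting $m'$ denote the number of non-identity factors in a composition of $m$ elements of $\VV$, the leaf-counting formula for planar rooted trees from Section \ref{xxsec1.2} yields
\[
m'+1\;\leq\; j\;=\;1+\sum_{\ell=1}^{m'}\bigl(\Ar(v_\ell)-1\bigr)\;\leq\;1+m'(k-1),
\]
where $j$ is the arity of the composition. Absorbing identity factors via $\alpha\circ_i\id=\alpha=\id\circ_1\alpha$, the composition then lies in $\VV^{m'}(j)\subseteq\VV^{j-1}(j)$, so $\PP(j)=\VV^{j-1}(j)$ for every $j\geq 1$; dually, $\VV^m(j)=0$ whenever $j>1+m(k-1)$.

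These two bounds produce the sandwich
\[
\sum_{i=0}^n\dim\PP(i)\;\leq\;\dim\Bigl(\sum_{i=0}^n\VV^i\Bigr)=\dim\VV^n\;\leq\;\sum_{j=0}^{1+n(k-1)}\dim\PP(j),
\]
in which both outer terms have $\limsup_n\log_n$ equal to $\GK(\PP)$, using only that $\log_n(1+n(k-1))\to 1$ and that the growth exponent of a nondecreasing sequence is invariant under linear rescaling of the index. The middle quantity consequently has the same $\limsup_n\log_n$, which is precisely the claimed identity. The main obstacle is the first arity bound $j\geq m'+1$; it hinges on the absorption of identity factors after the reduction, for in the general setting arity-$0$ or non-identity arity-$1$ generators decouple composition length from output arity and can break the bound.
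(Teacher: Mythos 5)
Your core sandwich argument---bounding the output arity of a left-normal composition from above and below by the number of composition factors, and exploiting that $\log_n$ is insensitive to a linear rescaling $n\mapsto 1+n(k-1)$ of the index---is exactly the mechanism in the paper's proof of Lemma~\ref{xxlem3.3}. The paper fixes $t$ (your $k$) as the maximal arity in $\VV$, observes $\sum_{i\le n}\VV^i\subseteq(\PP(0),\dots,\PP(nt-(n-1)),0,\dots)$, and chooses an integer $s$ with $(\PP(0),\dots,\PP(t),0,\dots)\subseteq\sum_{j\le s}\VV^j$ to push the other inclusion $(\PP(0),\dots,\PP(nt-(n-1)),0,\dots)\subseteq\sum_{j\le ns}\VV^j$. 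Your identity $\PP(j)=\VV^{j-1}(j)$ plays the role of that second inclusion with $s=1$.

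The problem is the preliminary reduction that your version of the inclusion depends on. You assert that the discarded arity-$0$ and non-identity arity-$1$ generators ``contribute only to the finite-dimensional $\PP(0)\oplus\PP(1)$,'' hence only perturb the filtration by a bounded number of stabilisation steps in $\PP(1)$. That claim is false: if $w\in\VV(0)$ and $v\in\VV(m)$ with $m\ge 3$, then $v\circ_1 w\in\PP(m-1)$, and more generally arity-$0$ (and arity-$1$) generators, composed into higher-arity elements, contribute to \emph{every} arity. Removing them from $\VV$ therefore changes $\sum_{i\le n}\VV^i$ in every degree, not just in degrees $0$ and $1$; and since the statement to be proved concerns $\sum_{i\le n}\VV^i$ for the \emph{given} $\VV$ in the \emph{given} $\PP$, passing to $\PP_{rc}$ with a new generating subcollection requires, at a minimum, an independent argument that $\limsup_n\log_n\dim\sum_{i\le n}\VV^i$ is invariant under change of finite-dimensional generating subcollection and under passage from $\PP$ to $\PP_{rc}$. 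That invariance is essentially a restatement of the lemma itself, so as written the reduction is circular. Your bound $j\ge m'+1$ genuinely needs $\Ar(v_\ell)\ge 2$ for all non-identity factors, so the reduction cannot simply be dropped from your proof; the paper sidesteps the issue by replacing the sharp identity $\PP(j)=\VV^{j-1}(j)$ with the coarser inclusion governed by the constant $s$, which already absorbs all the arity-$0$ and arity-$1$ bookkeeping and needs no normalisation of $\VV$. To repair your argument you should either adopt the paper's $s$-device directly, or prove the invariance of the $\limsup$ under change of generating subcollection first (the standard GK-dimension argument $\VV_1\subseteq\sum_{i\le M}\VV_2^i\Rightarrow\sum_{i\le n}\VV_1^i\subseteq\sum_{i\le Mn}\VV_2^i$ carries over) and then additionally compare the filtrations on $\PP$ and on $\PP_{rc}$, rather than appeal to the incorrect ``contributes only to $\PP(0)\oplus\PP(1)$'' heuristic.
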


\begin{proof}
Let $t$ be the maximal arity of nonzero elements in $\VV$
and $s$ be an integer such that
$$(\PP(0),\PP(1),\dots,\PP(t),0,0,\dots)
\subseteq \sum_{j=0}^s \VV^j.$$
By the definition of $t$, we have
$\VV\subseteq (\PP(0),\PP(1),\dots,\PP(t),0,0,\dots)$.
Then, for every $n>0$,
\begin{align}
\label{E3.3.1}\tag{E3.3.1}
\sum^n_{i=0}\VV^i\subseteq
(\PP(0),\PP(1),\dots,\PP(nt-(n-1)),0,0,\dots)
\subseteq \sum^{ns}_{i=0}\VV^i.
\end{align}
Consequently,
\begin{align*}
\dim \left(\sum^n_{i=0}\VV^i\right)
\leq \dim\left(\sum^{nt-(n-1)}_{i=0}\PP(i)\right)
\leq \dim \left(\sum^{ns}_{i=0}\VV^i\right),\ n>0.
\end{align*}
Thus we have that
\[
\GK(\PP)=\limsup_{n\to\infty}\log_n\left(\dim
\left(\sum_{i=0}^n \PP(i)\right)\right)=
\limsup_{n\to\infty}\log_n\left(\dim\left(
\sum_{i=0}^n \VV^i\right)\right).
\]
\end{proof}

The following example shows that, in general, the
GK-dimension of an operad is not the supremum of
the GK-dimensions of its finitely generated suboperads.

\begin{example}
\label{xxex3.4}
Let $\alpha$ be any positive real number. Let $\PP$ be the operad
generated by infinitely many elements in $\XX$ such that
\[
\XX=\{\XX(n)\}_{n\geq 0},\ \XX(0)=\XX(1)=\emptyset,\
|\XX(n)|=\lfloor n^\alpha\rfloor-\lfloor (n-1)^\alpha\rfloor,
\ \forall n\geq 2,
\]
and subject to relations
\[
x_m\circ_i x_n=0, \quad \forall x_m\in \XX(m), x_n\in \XX(n),\quad
m,n\geq 2, \ 1\leq i\leq m.
\]
Then it is easy to check that $\GK(\PP)=\alpha>0$ but
$\GK(\PP')=0$ for all finitely generated suboperad $\PP'$ of $\PP$.
\end{example}

We will use the following nice construction which is related to
Warfield's example \cite{War84}.

\begin{example}
\label{xxex3.5}
We fix a real number $r$ strictly between $2$ and $3$ and
let $q$ be $\frac{r-1}{2}$ which is strictly between $\frac{1}{2}$
and $1$. Let $A$ be the quotient algebra $\FF\langle x_1,x_2\rangle/J$
generated by two elements $x_1,x_2$ of degree 1 and modulo the
monomial ideal $J$ generated by monomials having
degree $\geq 3$ in $x_2$ together with all monomials of the form
$$x_1^i x_2 x_1^j x_2 x_1^l$$
satisfying $j< n-\lfloor n^q \rfloor$ where $n=i+j+l+2$.

By an easy counting, $\dim A_0=1$, $\dim A_1=2$, and for each $n\geq 2$,
$$\dim A_n=1+n+\sum_{j=n-\lfloor n^q \rfloor}^{n-2}(n-1-j)=
1+n+\sum_{p=1}^{\lfloor n^q \rfloor-1}p.$$
As a consequence, we have
\begin{enumerate}
\item[(1)]
$\dim A_n$ is strictly increasing.
\item[(2)]
$$ \dim A_n= 1+n+\frac{1}{2}(\lfloor n^q \rfloor-1)(\lfloor n^q \rfloor)
\sim \frac{1}{2} n^{2q}=\frac{1}{2}n^{r-1}.$$
\item[(3)]
$\GK(A)=r$.
\end{enumerate}
\end{example}

Using the example above, we obtain the range
of possible values for the GK-dimension of an
associative algebra.

\begin{lemma}
\label{xxlem3.6}
\begin{enumerate}
\item[(1)]
For every integer $d\in \NN^\ast$, there is a graded algebra $A$
such that $\GK(A)=d$ and $\{\dim A_i\}_{i=0}^{\infty}$ is
weakly increasing.
\item[(2)]
There is a graded algebra $A$ such that $\GK(A)=\infty$.
\item[(3)]
For $d\in R_{\GK}\setminus \{0\}$,  there is
a graded algebra $A$ such that $\GK(A)=d$ and
$\{\dim A_i\}_{i=0}^{\infty}$ is strictly increasing.
\item[(4)]
All algebras in parts (1,2,3) can be taken to be monomial
algebras {\rm{(}}hence connected graded{\rm{)}}
finitely generated in degree 1.
\end{enumerate}
\end{lemma}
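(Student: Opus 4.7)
The plan is to exhibit, for each target dimension, an explicit finitely generated monomial graded algebra realizing that GK-dimension, and to verify the growth and monotonicity claims by direct Hilbert-series counts. For part (1) with $d=1$ take $A=\FF[x]$, which has $\dim A_n=1$ and is monomial of degree one. For integer $d\geq 2$ take
\[
A^{(d)}:=\FF\langle y_1,\ldots,y_d\rangle\big/\bigl(y_jy_i : j>i\bigr),
\]
whose normal monomials are the weakly increasing words, so $\dim A^{(d)}_n=\binom{n+d-1}{d-1}$ and by the hockey-stick identity $\sum_{i=0}^n \dim A^{(d)}_i=\binom{n+d}{d}$, a polynomial of degree $d$, giving $\GK(A^{(d)})=d$ and strictly increasing dimensions. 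For part (2) take $A=\FF\langle x_1,x_2\rangle$ with $\dim A_n=2^n$. Parts (1), (2), (4) follow at once, as does part (3) for $d\in\NN^*_{\geq 2}$ and $d=\infty$ since the corresponding dimension sequences are strictly increasing. The literal reading of the case $d=1$ of part (3) is vacuous: a strictly increasing sequence of positive integers satisfies $\dim A_n\geq n$, forcing $\GK\geq 2$; I would flag this edge case and read it as "weakly increasing."

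The substantive case of part (3) is non-integer $d\in[2,\infty)$. For $d\in(2,3)$ I would invoke Example~\ref{xxex3.5}: it provides a degree-one-generated monomial algebra $A_d=\FF\langle x_1,x_2\rangle/J_d$ with $\dim(A_d)_n \sim \tfrac{1}{2}n^{d-1}$, strictly increasing and with $\GK(A_d)=d$.

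To reach non-integer $d\geq 3$, I would apply a monomial \emph{right-shift}. Given any degree-one-generated monomial algebra $A=\FF\langle x_1,\ldots,x_k\rangle/J$, set
\[
B:=\FF\langle x_1,\ldots,x_k,t\rangle\big/\bigl(J,\; tx_1,tx_2,\ldots,tx_k\bigr).
\]
The relations $tx_i=0$ force every $t$ to occur in a terminal suffix $t^\ell$, so the normal monomials of $B$ of degree $n$ are precisely $w\cdot t^\ell$ with $w$ a normal monomial of $A$ of degree $n-\ell$. Hence $\dim B_n=\sum_{i=0}^n \dim A_i$ and $H_B(z)=H_A(z)/(1-z)$, so $\GK(B)=\GK(A)+1$, and $\{\dim B_n\}$ is strictly increasing whenever $\dim A_n\geq 1$ for all $n$ (which holds for Example~\ref{xxex3.5} since $x_1^n$ is always normal). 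Iterating this shift $\lfloor d\rfloor-2$ times starting from the algebra in Example~\ref{xxex3.5} with parameter $d-(\lfloor d\rfloor-2)\in(2,3)$ yields a monomial degree-one-generated algebra with $\GK=d$ and strictly increasing dimensions.

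The principal obstacle is staying within the class of \emph{monomial} degree-one-generated algebras: the usual additive shift $A\mapsto A\otimes\FF[t]$ introduces commutation relations that are not monomial. My right-shift sidesteps this by realizing the Hilbert-series shift $H_A(z)\mapsto(1-z)^{-1}H_A(z)$ purely through the monomial relations $tx_i=0$. Once this ingredient is in hand, the rest reduces to routine asymptotic bookkeeping on Hilbert series and monotonicity checks, and part (4) is automatic since every algebra in the construction is by design monomial and finitely generated in degree one.
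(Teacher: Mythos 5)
Your proof is correct, and your construction for part~(4) is actually more direct than the paper's. For part~(3) with non-integer $d>3$, the paper takes the algebra $A$ of Example~\ref{xxex3.5} with parameter $r=d-(\lfloor d\rfloor-2)\in(2,3)$, forms $A'=A[x_1,\dots,x_n]$ with $n=\lfloor d\rfloor-2$, and then invokes \cite[Remark~4.1]{Bel15} to replace $A'$ by a monomial algebra with the same Hilbert series. Your ``right-shift'' $B=\FF\langle x_1,\dots,x_k,t\rangle/(J,tx_1,\dots,tx_k)$ achieves $H_B(z)=H_A(z)/(1-z)$ purely with monomial relations and keeps everything generated in degree one, so part~(4) is automatic and no appeal to Bergman--Bel15 machinery is needed. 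The two routes realize the same Hilbert-series shift; yours is more self-contained, the paper's is shorter given that \cite{Bel15} is already cited elsewhere.

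Your flag on the case $d=1$ of part~(3) is correct and points to a genuine slip in the paper, not in your argument. For any unital $\NN$-graded algebra one has $A_0\neq 0$, so a strictly increasing sequence $\{\dim A_i\}$ forces $\dim A_n\geq n+1$ and hence $\GK(A)\geq 2$; thus there is no graded algebra with $\GK=1$ and strictly increasing dimension sequence. The paper's own proof disposes of the integer case by writing ``it follows from part~(1),'' but part~(1) only supplies a weakly increasing sequence, so the paper's justification is inadequate for $d=1$ and the statement itself fails there. Reading part~(3) with ``weakly increasing'' (which is all that the paper's applications in Proposition~\ref{xxpro3.7} and Theorem~\ref{xxthm0.7} actually use, and which in the latter case only arises with $d>2$ anyway) is the right repair.
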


\begin{proof} (1)
We can take $A$ to be the commutative polynomial ring
$\FF[x_1,\cdots,x_d]$.

(2) We can take $A$ to be the free algebra $\FF\langle x_1,x_2\rangle$.

(3) If $d$ is an integer, it follows from part (1). If $d=\infty$,
it follows from part (2). Now we let $d$ be a finite non-integral
real number $>2$. Let $n$ be $\lfloor d\rfloor -2$ and $r=d-n$.
Then $n\geq 0$ and $2<r<3$. Let $A$ be the algebra given in Example
\ref{xxex3.5} and let $A'=A[x_1,\cdots,x_n]$. Then the assertion follows.

(4) It is well-known (see, e.g, \cite[Remark 4.1]{Bel15}) that
given a finitely generated associative algebra $A$, there is a
finitely generated monomial algebra $B=\FF\langle X\rangle/I$
such that $\GK(A)=\GK(B)$, where $\FF\langle X\rangle$ is the free
associative algebra generated by a finite set $X$ and
$I$ is an ideal consisting of words in $X$.
Since $A$ is connected graded, we even have that $A$ and $B$ have the
same Hilbert series.
\end{proof}

Below is a weak version of Theorem \ref{xxthm0.2}(2).

\begin{proposition}
\label{xxpro3.7}
For any $r\in R_{\GK}$, there
exists a finitely generated operad $\PP$ such that $\GK(\PP)=r$.
\end{proposition}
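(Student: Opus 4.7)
The plan is to derive the proposition directly from two ingredients already developed in the paper: the min-envelope construction $A\mapsto \PP_A$ of Construction \ref{xxcon1.3} and the algebra realization result Lemma \ref{xxlem3.6}. The min-envelope construction is designed so that the dimensions of $A$ and $\PP_A$ differ only by an arity shift, and both ``locally finite'' and ``finitely generated'' transfer cleanly from $A$ to $\PP_A$. Thus realizing every value of $\GK$ on the algebra side immediately yields the same realization on the operad side.

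First I would record the dimension bookkeeping. By \eqref{E1.3.2} we have $G_{\PP_A}(z)=zH_A(z)$, i.e.\ $\dim \PP_A(n+1)=\dim A_n$ for $n\geq 0$ and $\PP_A(0)=0$. Summing, one obtains
\[
\sum_{i=0}^{n} \dim \PP_A(i) \;=\; \sum_{i=0}^{n-1} \dim A_i ,
\]
so Definition \ref{xxdef3.1} gives $\GK(\PP_A)=\GK(A)$, and $\PP_A$ is locally finite whenever $A$ is. Second, the remark immediately after Construction \ref{xxcon1.3} says that when $A$ is generated in degree one then $\PP_A$ is generated by $\PP_A(2)$; in particular $\PP_A$ is finitely generated whenever $A$ is.

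With this machinery the result is immediate by cases on $r\in R_{\GK}$. For $r\in R_{\GK}\setminus\{0\}$, Lemma \ref{xxlem3.6}(3)--(4) produces a monomial (hence connected, augmented) graded algebra $A$ finitely generated in degree one with $\GK(A)=r$. Setting $\PP:=\PP_A$ yields a finitely generated, locally finite operad with $\GK(\PP)=r$. For the boundary case $r=0$, the algebra $A=\FF$ with its trivial augmentation gives $\PP_A=(0,\FF\id,0,0,\dots)$, which is finitely generated and satisfies $\GK(\PP_A)=0$.

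There is essentially no hard step here: the realization of every value in $R_{\GK}$ on the algebra side is already packaged in Lemma \ref{xxlem3.6}, and Construction \ref{xxcon1.3} is engineered precisely to push such examples into the operadic world. The only minor verification is that each chosen $A$ admits the graded augmentation required by Construction \ref{xxcon1.3}, which holds automatically since one may assume $A_0=\FF$ in all the cases used, making $\fm=\bigoplus_{i\geq 1} A_i$ a maximal graded ideal.
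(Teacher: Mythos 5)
Your proposal is correct and follows essentially the same route as the paper's own proof: invoke Lemma~\ref{xxlem3.6} to obtain a finitely generated connected graded (monomial) algebra $A$ with $\GK(A)=r$, then apply Construction~\ref{xxcon1.3} and note $\GK(\PP_A)=\GK(A)$ via the arity-shift in dimensions. Your explicit treatment of the case $r=0$ (taking $A=\FF$) is a small but welcome touch, since Lemma~\ref{xxlem3.6}(3) as stated only covers $R_{\GK}\setminus\{0\}$.
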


\begin{proof} By Lemma \ref{xxlem3.6}(3), there is a finitely
generated connected graded monomial algebra $A$ such that
$\GK(A)=r$. By Construction \ref{xxcon1.3}, there is a finitely
generated operad $\PP$ such that $\dim \PP(n)=\dim A_{n-1}$
for all $n\geq 1$. Therefore, by definition,
$\GK(\PP)=\GK(A)=r$. The assertion follows.
\end{proof}

\begin{proposition}
\label{xxpro3.8}
Suppose $\PP$ is a finitely generated and locally finite ns operad.
\begin{enumerate}
\item[(1)]
$\GK(\PP)=0$ if and only if $\PP$ is finite dimensional.
\item[(2)]
$\GK(\PP)$ cannot be strictly between 0 and 1.
\end{enumerate}
\end{proposition}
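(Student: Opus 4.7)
My plan is to prove (1) and (2) simultaneously via a clean dichotomy: a finitely generated locally finite ns operad $\PP$ is either finite dimensional (so $\GK(\PP) = 0$) or satisfies $\GK(\PP) \geq 1$. This would immediately yield (2) and the nontrivial direction of (1); the reverse implication in (1) is obvious, since if $\PP$ is finite dimensional then $\sum_{i=0}^n \dim\PP(i)$ is eventually constant, forcing $\GK(\PP) = 0$.

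To execute the dichotomy, I would fix a finite dimensional generating subcollection $\VV$ of $\PP$ (which exists by the finite generation hypothesis) and consider the filtration $W_n := \sum_{i=0}^n \VV^i$ with $w_n := \dim W_n$. By Lemma \ref{xxlem3.3}, $\GK(\PP) = \limsup_{n\to\infty} \log_n w_n$. The key identity, coming directly from $\VV^{i+1} = \VV^i \circ \VV$ and the bilinearity of partial compositions over sums, is
\[
W_n \circ \VV \;=\; \sum_{i=0}^n \VV^{i+1} \;=\; \sum_{j=1}^{n+1} \VV^j \;\subseteq\; W_{n+1}.
\]
I would then argue that if the ascending chain $W_0 \subseteq W_1 \subseteq \cdots$ ever stabilizes, say $W_{n_0} = W_{n_0+1}$, then $W_{n_0} \circ \VV \subseteq W_{n_0+1} = W_{n_0}$, and a straightforward induction on $m$ gives $\VV^m \subseteq W_{n_0}$ for all $m \geq 0$; hence $\PP = \sum_{m\geq 0} \VV^m = W_{n_0}$ is finite dimensional.

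Taking the contrapositive, if $\PP$ is infinite dimensional then $W_n \subsetneq W_{n+1}$ for every $n$, making $\{w_n\}$ a strictly increasing sequence of positive integers with $w_n \geq w_0 + n \geq n+1$ for all $n \geq 0$; consequently $\GK(\PP) \geq \limsup_{n\to\infty}\log_n(n+1) = 1$. I expect no serious obstacle here; the only point requiring some care is the containment $W_n \circ \VV \subseteq W_{n+1}$ in the operadic setting, where partial compositions must be tracked across varying arities and leaf indices. Morally, this is the operadic analogue of the classical proof that finitely generated associative algebras cannot have $0 < \GK < 1$, with the filtration $\{W_n\}$ playing the role of the powers of a finite dimensional generating subspace.
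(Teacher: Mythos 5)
Your proof is correct and takes essentially the same approach as the paper: the paper also reduces to showing that infinite dimensionality forces the filtration by powers of a finite dimensional generating subcollection to grow strictly, giving $\dim \geq n+1$ and hence $\GK \geq 1$ via Lemma~\ref{xxlem3.3}. The only cosmetic difference is that you filter by $W_n = \sum_{i=0}^n \VV^i$ (automatically an increasing chain), while the paper assumes $1_\PP \in \VV$ so that $\VV^m$ itself is increasing.
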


\begin{proof} (1)
The ``if'' part is clear.
For the ``only if'' part, suppose $\dim(\PP)=\infty$.
Since $\PP$ is finitely generated, there is a finite-dimensional
subcollection $(1_{\PP}\in) \VV$ that generates $\PP$. We claim that
$\VV^{m+1}\neq \VV^m$ for every $m$. Suppose to the
contrary that $\VV^{m+1}= \VV^m$ for some $m$. Then
by induction, one sees that $\VV^n=\VV^m$ for every
$n>m$. So $\PP=\cup_{n>m} \VV^n=\VV^m$, which is finite
dimensional. This yields a contradiction. Therefore
we proved the claim, and consequently,
$\dim \VV^m\geq m+1$ for every $m$. By Lemma \ref{xxlem3.3},
\[
\GK(\PP)=\limsup_{n\to \infty}\log_n\left(\sum_{i=0}^n\dim\VV^i\right)\geq
\lim_{n\to \infty}\log_n(n+1)=1,
\]
a contradiction.

(2) See the proof of part (1).
\end{proof}

The part (2) of the above proposition, as in the case of
associative algebras, there is a gap between $0$ and $1$ for
the GK-dimensions of finitely generated operads. This
is false if $\PP$ is infinitely generated, see
Example \ref{xxex3.4} and \cite[Corollary 6.12]{BYZ20}.

A \emph{monomial operad} means a quotient of free operad
by an ideal generated by tree monomials. The following lemma
implies that given an operad, there exists a {monomial operad}
with the same GK-dimension.

\begin{lemma}
\label{xxlem3.9}
Suppose $\II\unlhd \TX$. Then $\GK(\TX/\II)=\GK(\TX/(\lm \II))$.
\end{lemma}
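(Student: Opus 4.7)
The plan is to show that $\TX/\II$ and $\TX/(\lm\II)$ have identical Hilbert series, from which equality of GK-dimensions is immediate by Definition \ref{xxdef3.1}. The key input is the Gr\"obner–Shirshov machinery set up in Section \ref{xxsec2}, applied to the trivial \gsb\ $\GG = \II$ (which the text already notes is itself a \gsb\ for $\II$).

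First, I would apply Proposition \ref{xxpro2.4} with $\GG = \II$ to conclude that $\Irr(\II)$ is an $\FF$-basis of $\TX/\II$. In particular, for each arity $n$, $\dim_{\FF}(\TX/\II)(n) = |\Irr(\II)(n)|$.

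Next, I would analyze $\TX/(\lm\II)$. Recall from Section \ref{xxsec2} that $(\lm\II) = \FF\lm\II$ as subspaces of $\TX$, so the ideal $(\lm\II)$ admits the set $\lm\II$ of tree monomials as an $\FF$-basis. Consequently, the complementary set of tree monomials, namely $\TM \setminus \lm\II$, descends to an $\FF$-basis of $\TX/(\lm\II)$. I would then observe that $\TM \setminus \lm\II$ coincides with $\Irr(\II)$: a tree monomial $T$ lies in $\lm\II$ iff $T$ is the leading monomial of some element of $\II$, and by the monomial-order property that partial compositions are strictly increasing (Section \ref{xxsec2}(ii)), this happens iff $T$ is divisible by $\lm g$ for some $g\in\II$, which is precisely the negation of $T\in\Irr(\II)$.

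Combining the two steps yields $\dim_{\FF}(\TX/\II)(n) = |\Irr(\II)(n)| = \dim_{\FF}(\TX/(\lm\II))(n)$ for every $n$. Hence
\[
\sum_{i=0}^n \dim_{\FF}(\TX/\II)(i) = \sum_{i=0}^n \dim_{\FF}(\TX/(\lm\II))(i)
\]
for all $n$, so the two operads have identical generating series and, by Definition \ref{xxdef3.1}, the same GK-dimension. The only subtle point, and what I expect to be the main thing to pin down carefully, is the identification $\TM \setminus \lm\II = \Irr(\II)$; everything else is a direct appeal to Proposition \ref{xxpro2.4} and the identity $(\lm\II) = \FF\lm\II$ recorded just before Definition \ref{xxef2.3}.
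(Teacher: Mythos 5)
Your proof is correct and takes essentially the same route as the paper: the paper's one-line argument invokes Proposition \ref{xxpro2.4} together with the identity $\Irr(\II)=\Irr((\lm\II))$, which is exactly your identification $\TM\setminus\lm\II=\Irr(\II)$ in a slightly different guise. You simply spell out the dimension count and the monomial-order reasoning behind the set equality that the paper leaves implicit.
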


\begin{proof}
Note that both $\II$ and $(\lm \II)$ are ideals and thus \gsbs.
Now the statement follows from Proposition \ref{xxpro2.5} and
$\Irr(\II)=\Irr((\lm \II))$.
\end{proof}

\section{Bergman's gap theorem}
\label{xxsec4}
\subsection{Single-branched ns operads}
In this subsection, we will introduce so-called
single-branched tree monomials and study their properties,
which will be used in the next subsection to prove Bergman's
gap theorem for operads.

Let $\tau$ be a PRT. An internal vertex $v$ is called a
\emph{top internal vertex} if its children are all leaves.
A \emph{branch} of $\tau$ is a path from the root to a top
internal vertex $v$, denoted by $\bran(v)$. A PRT is called a
\emph{single-branched tree} if it has exactly one branch.
The planar structure of $\tau$ induces an order on branches
of $\tau$:
\begin{center}
$\bran(v)>\bran(v')$ if $\h(v)>\h(v')$, or  $\h(v)=\h(v')$
and $v>v'$.
\end{center}
The maximal branch of $\tau$ is called the \emph{pivot branch}
of $\tau$. Denote by $\Piv(\tau)$ the set of all {vertices} of
$\tau$ that belong to the pivot branch of $\tau$. The top
internal vertex of $\tau$ belonging to $\Piv(\tau)$ is called
the \emph{pivot top internal vertex}, denoted by $\TIV(\tau)$.

A tree monomial is called \emph{single-branched} if its
underlying tree is single-branched. An operad is called
\emph{single-branched} if it has an $\FF$-basis that consists
of single-branched tree monomials.

A tree monomial $T$ is called \emph{right normal} if it can
be written in terms of partial compositions of generating
operations with parentheses from right to left,
i.e.,
\begin{equation}
\label{E4.0.1}\tag{E4.0.1}
T=(x_1\circ_{i_1}(\cdots (x_{n-2}\circ_{i_{n-2}}
(x_{n-1}\circ_{i_{n-1}}x_n))\cdots)).
\end{equation}
\emph{Left normal} tree monomials are defined similarly,
see \eqref{E3.1.1}. Note that every tree monomial is left
normal, but not necessarily right normal. For a right normal
tree monomial, we usually write without parentheses
(and/or composition symbols $\circ_i$ sometimes) for short
when no confusion arises,
e.g.,
\begin{equation}
\label{E4.0.2}\tag{E4.0.2}
T=x_1\circ_{i_1}x_{2}\circ_{i_2}\cdots \circ_{i_{n-1}}x_n
=x_1x_2\cdots x_n.
\end{equation}
The following lemma is straightforward.

\begin{lemma}
\label{xxlem4.1}
Let $T$ be a tree monomial. Then the following statements
are equivalent:
\begin{enumerate}[(i)]
\item
$T$ is single-branched;
\item
$T$ is right normal;
\item
$\wt(T)=\h(T)$.
\end{enumerate}
\end{lemma}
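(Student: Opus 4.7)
The plan is to prove the three-way equivalence by establishing (i)$\Leftrightarrow$(iii) through a direct vertex-counting argument and (i)$\Leftrightarrow$(ii) by induction on the weight, exploiting the inductive structure of right-nested partial compositions.

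First I would handle (i)$\Leftrightarrow$(iii). For (i)$\Rightarrow$(iii), if $T$ is single-branched then by definition the underlying tree $\Tr(T)$ has a unique top internal vertex $v^{\ast}$, and any internal vertex off the path from $\Root$ to $v^{\ast}$ would either give rise to a distinct top internal vertex (producing a second branch) or have no descendant top internal vertex at all (impossible since each descendant chain terminates at some top internal vertex). Hence every internal vertex lies on the path to $v^{\ast}$, giving $\wt(T) = \h(v^{\ast}) = \h(T)$. For (iii)$\Rightarrow$(i), fix $v \in \Int(\Tr(T))$ with $\h(v) = \h(T) = h$; the path from root to $v$ already contains $h$ internal vertices, so the equality $\wt(T) = h$ forces $\Int(\Tr(T))$ to coincide with that chain. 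Every child of every $v_j$ on the chain except $v_{j+1}$ is a leaf, and the children of $v = v_h$ are also all leaves; thus $v$ is the only top internal vertex, and $T$ is single-branched.

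Next I would prove (i)$\Leftrightarrow$(ii) by induction on $\wt(T)$. The base case $\wt(T) = 1$ is trivial. For (ii)$\Rightarrow$(i), if $T = x_1 \circ_{i_1}(x_2 \circ_{i_2}(\cdots \circ_{i_{n-1}} x_n))$, let $T' := x_2 \circ_{i_2}(\cdots \circ_{i_{n-1}} x_n)$; by the inductive hypothesis $T'$ is single-branched, and by Definition \ref{xxdef1.12} the grafting $T = x_1 \circ_{i_1} T'$ attaches the single chain of internal vertices of $T'$ directly beneath the operation $x_1$, so $T$ is again single-branched. Conversely, for (i)$\Rightarrow$(ii), let $v_1, v_2, \ldots, v_h$ denote the internal vertices of $\Tr(T)$ listed along the unique branch so that $v_1 \in \Child(\Root(\Tr(T)))$ and $v_{j+1} \in \Child(v_j)$; setting $x_j := \x(v_j)$ and letting $i_j$ be the position of $v_{j+1}$ among $\Child(v_j)$ in the planar order, I claim that $T = x_1 \circ_{i_1}(x_2 \circ_{i_2}(\cdots \circ_{i_{h-1}} x_h))$. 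This is checked by induction on $h$: the composition $x_{h-1} \circ_{i_{h-1}} x_h$ reconstructs the submonomial of $T$ sitting above $v_{h-2}$, and each subsequent outer composition rebuilds one further level of the chain.

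The main obstacle is purely bookkeeping, namely tracking the leaf relabeling produced by partial grafting: after $x_{j+1}$ is grafted into position $i_j$ of $x_j$, the leaves inherited from $x_j$ occupying positions $> i_j$ get shifted, and one must verify that the indices $i_j$ defined from the planar order on $\Child(v_j)$ really match the partial composition formula. This is immediate from the clause in Definition \ref{xxdef1.12} stipulating that the root-child of the grafted tree takes the planar position formerly occupied by the leaf $i_j$, so the construction closes up without any further normalization.
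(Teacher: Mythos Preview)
Your proof is correct. The paper omits the proof entirely, stating only that the lemma is straightforward, so there is no argument to compare against; your vertex-counting for (i)$\Leftrightarrow$(iii) and induction on weight for (i)$\Leftrightarrow$(ii) supply exactly the routine details the authors left implicit.
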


\begin{example}
\label{xxex4.2}
In Figure \ref{xxfig3-xxex1.10}, $T_1,T_2$ and $T_3$ are
single-branched and $T_4$ is not single-branched.
\end{example}

A single-branched tree monomial
$w=x_1\circ_{i_1}x_2\circ_{i_2}\cdots \circ_{i_{n-1}}x_n$
is called \emph{periodic} if there exists a positive
integer $p<n$ such that $x_j=x_{j+p}$ for all
$1\leq j\leq n-p$ and $i_{j'}=i_{j'+p}$ for all
$1\leq j'\leq n-p-1$. The integer $p$ is called a
\emph{local period} of $w$ and the smallest local period
of $w$ is called the \emph{minimal period} of $w$.
Given a periodic tree monomial
$w=x_1\circ_{i_1}x_2\circ_{i_2}\cdots \circ_{i_{n-1}}x_n$
with minimal period $p$, by using its minimal period,
we can extend $w$ to the left and/or to the right to get
a new single-branched tree monomial which contains $w$ as
a submonomial. More precisely, $w$ can be extended to
\[
w_{m,l}:=x_{-m}\circ_{i_{-m}}\cdots \circ_{i_{-1}} x_{0}
\circ_{i_{0}}x_1\circ_{i_1}x_2\circ_{i_2}\cdots
\circ_{i_{n-1}}x_n\circ_{i_n} \cdots \circ_{i_{l-1}}x_l
\]
where $m\geq -1$, $l\geq n$, for each integer $q$ between
$-m$ and $l$ (say, $-m\leq q=ps+r\leq l$, $s,r\in \ZZ$, $1\leq r\leq p$),
$x_q=x_r$ and $\circ_{i_q}=\circ_{i_r}$ (except $\circ_{i_l}$).
It is clear that $p$ is still the minimal period of $w_{m,l}$.
If a positive integer $p'$ is a local period of all extensions
$w_{m,l}$ of $w$, we call $p'$ a \emph{period} of $w$.
For example, given
\[
w=a\circ_1 a \circ_1 b \circ_1 a\circ_1 a \circ_1 b
\circ_1 a\circ_1 a,
\]
then $3$ is the minimal period of $w$;
$6$ is a period of $w$; $7$ is a local period (but not a
period) of $w$.

The following lemma on periods is easy to prove.

\begin{lemma}
\label{xxlem4.3}
Suppose $w$ is a single-branched tree monomial of minimal
period $p$. If $l$ is a period of $w$, then $p$ divides $l$.
\end{lemma}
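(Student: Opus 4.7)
The plan is to mimic the classical Fine--Wilf theorem for words: in a sequence that is simultaneously $p$-periodic and $l$-periodic, the greatest common divisor $\gcd(p,l)$ is also a period, provided the sequence is long enough. Here a single-branched tree monomial is essentially just the finite sequence $(x_1,i_1,x_2,i_2,\dots,x_{n-1},i_{n-1},x_n)$ of labels and composition indices, so the same idea applies verbatim.

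The key leverage provided by the hypothesis is the definition of \emph{period}: $l$ being a period of $w$ means $l$ is a local period of every extension $w_{M,L}$, not merely of $w$ itself. I would therefore fix a single right extension $E=w_{-1,L}$ for $L$ much larger than $p+l$. By construction, $E$ is $p$-periodic, so $x_j=x_{j+p}$ and $i_j=i_{j+p}$ throughout the admissible range; by hypothesis, $l$ is a local period of $E$, so $x_j=x_{j+l}$ and $i_j=i_{j+l}$ throughout the corresponding range.

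Next I would write $l=qp+r$ with $0\le r<p$. Starting from $x_j=x_{j+l}=x_{j+qp+r}$ and applying $p$-periodicity of $E$ a total of $q$ times (shifting down by $p$ each step) gives $x_j=x_{j+r}$; the same reasoning handles the composition indices $i_j=i_{j+r}$. Restricting to the range $1\le j\le n-r$, which lies inside the range where these equalities hold provided $L$ was chosen large enough, shows that $r$ is a local period of $w$ itself.

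Finally, since $0\le r<p$ and $p$ is the \emph{minimal} period of $w$, the only possibility is $r=0$, i.e., $p\mid l$. I expect the only real obstacle to be bookkeeping: one has to verify that every intermediate index encountered while iterating the $p$-periodicity lies inside $[1,L]$, and that the ranges for $x_j$ and $i_j$ are handled consistently (the $i_j$ range is always one shorter than the $x_j$ range). Taking $L$ sufficiently large — which is permitted precisely because the definition of ``period'' quantifies over \emph{all} extensions — reduces these checks to trivialities.
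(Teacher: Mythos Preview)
Your proposal is correct and follows essentially the same argument as the paper: write $l=qp+r$, pass to a sufficiently long extension, combine one $l$-shift with $q$ many $p$-shifts to exhibit $r$ as a local period of $w$, and invoke minimality of $p$ to force $r=0$. The only cosmetic differences are that the paper extends to both sides (taking $w_{l,n+p}$) and shifts ``down by $l$ then up by $qp$'' rather than ``up by $l$ then down by $qp$'', and that you explicitly track the composition indices $i_j$ as well as the labels $x_j$, which the paper's proof leaves implicit.
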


\begin{proof}
Suppose to the contrary that $l=pq+r$ with $q,r\in \NN$ and
$0<r<p$. Assume $w=x_1\circ_{i_1}x_2\circ_{i_2}\cdots
\circ_{i_{n-1}}x_n$. Consider the extension
\[
w_{l,n+p}=x_{-l}\circ_{i_{-l}}\cdots \circ_{i_{-1}} x_{0}
\circ_{i_{0}}x_1\circ_{i_1}\cdots \circ_{i_{n-1}}x_n
\circ_{i_{n}}\cdots\circ_{i_{n+p-1}}x_{n+p}.
\]
For all $1\leq i\leq n-r$, since $l$ and $pq$ are local
periods of $w_{l,n+p}$,  we have that
$x_{i+r}=x_{i+l-pq}=x_{i-pq}=x_{i}$. Thus $r$ is a local
period of $w$, contradicting the minimality of $p$.
\end{proof}

The following is an analogue of \cite[Lemma 2.3]{KL00}.

\begin{lemma}
\label{xxlem4.4}
Let $w$ be a single-branched tree monomial of height $n>0$.
Suppose that $w$ is periodic with minimal period $p<n$ and
has two equal submonomials
\begin{equation}
\label{E4.4.1}\tag{E4.4.1}
x_{i+1}\circ_{\alpha_{i+1}}\cdots \circ_{\alpha_{i+r-1}}x_{i+r}=
x_{j+1}\circ_{\alpha_{j+1}}\cdots \circ_{\alpha_{j+r-1}}x_{j+r}
\end{equation}
of height $r\geq p$, $0\leq i<j$. Then $p$ divides $j-i$.
\end{lemma}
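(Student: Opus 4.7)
The plan would be to show that $d := j - i$ is a period of $w$ in the sense defined in this section; Lemma~\ref{xxlem4.3} would then immediately yield $p \mid d$.

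To do so, we first extend $w$ to an arbitrary $w_{m,l}$ via its minimal period $p$, so that in the extension both the vertex label $x_q$ and the composition index $\alpha_q$ depend only on the residue of $q$ modulo $p$ (with representative in $\{1,\dots,p\}$). The hypothesized submonomial equality gives $x_{i+k} = x_{j+k}$ for $1 \le k \le r$ and $\alpha_{i+k} = \alpha_{j+k}$ for $1 \le k \le r-1$; reducing both sides modulo $p$ these turn into
\[
x_c = x_{(c+d) \bmod p}
\quad\text{and}\quad
\alpha_c = \alpha_{(c+d) \bmod p}
\]
for $c$ ranging over the residues $(i+k) \bmod p$ with $k = 1,\dots,r$ (respectively $k = 1,\dots,r-1$). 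Because $r \ge p$, the first collection exhausts $\{1,\dots,p\}$, so $x_s = x_{s+d}$ holds throughout every extension $w_{m,l}$; the corresponding statement for $\alpha$ is immediate as soon as $r \ge p+1$.

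The hard part is the tight case $r = p$, where only $p-1$ residues are directly covered for $\alpha$, missing some $c_0$. The plan here is to consider the orbit of $c_0$ under the permutation $c \mapsto (c+d) \bmod p$, which has length $p/\gcd(p,d)$, and to observe that the $p-1$ known $\alpha$-equalities already chain the long way around this orbit to force $\alpha_{c_0} = \alpha_{(c_0+d) \bmod p}$ as well (the case $\gcd(p,d) = p$ being the desired conclusion $p \mid d$ itself). Putting these pieces together, $d$ is a local period of every extension $w_{m,l}$, i.e., a period of $w$, and Lemma~\ref{xxlem4.3} delivers $p \mid j - i$. Apart from this cycle-chase in the borderline case, the rest is routine periodicity bookkeeping.
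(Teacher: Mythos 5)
Your overall plan coincides with the paper's: show that $d := j-i$ is a period of $w$ and then invoke Lemma \ref{xxlem4.3} to obtain $p \mid d$. In both proofs the only non-routine point is recovering, in the tight case $r = p$, the one $\alpha$-equality that \eqref{E4.4.1} fails to give directly modulo $p$, i.e., showing $\alpha_i = \alpha_j$. The paper does this by a multiset count: by periodicity the two length-$p$ windows $\{\alpha_i,\dots,\alpha_{i+p-1}\}$ and $\{\alpha_j,\dots,\alpha_{j+p-1}\}$ coincide as multisets (each being a cyclic shift of $\{\alpha_1,\dots,\alpha_p\}$), while \eqref{E4.4.1} makes them agree termwise in all but the first slot, so cancelling the common $p-1$ entries forces $\alpha_i = \alpha_j$. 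Your cycle-chase reaches the same conclusion differently: the missing residue $c_0$ lies on an orbit of length $p/\gcd(p,d)$ under $c \mapsto c+d$ in $\ZZ/p\ZZ$, and chaining the $p/\gcd(p,d)-1$ known equalities around the orbit closes it; the case $\gcd(p,d)=p$ is the conclusion $p\mid d$ itself. Both arguments are correct. The multiset cancellation is a bit shorter; your chase is more explicit about how $\gcd(p,d)$ enters and absorbs the degenerate case. The surrounding bookkeeping — reducing to residues via the extension $w_{m,l}$, exhausting $\{1,\dots,p\}$ when $r\geq p$, and the final appeal to Lemma \ref{xxlem4.3} — matches the paper's computation directly.
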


\begin{proof}
We modify the proof of \cite[Lemma 2.3]{KL00} to take care of
the composition indices. For the manipulations described below,
if the tree monomial $w$ is too short, consider it in its extensions.

Since $r\geq p$, $i+r\geq i+p$. Similarly, $j+r\geq j+p$.
Since $p$ is the minimal period of $w$,
$$x_i=x_{i+p}=x_{j+p}=x_j.$$
By periodicity, we have the following equality of two sets
$$\{\circ_{\alpha_i}, \circ_{\alpha_{i+1}},
\cdots, \circ_{\alpha_{i+p-1}}\}
=\{\circ_{\alpha_j}, \circ_{\alpha_{j+1}},
\cdots, \circ_{\alpha_{j+p-1}}\}.$$
By \eqref{E4.4.1}, we also have
$$\{\circ_{\alpha_{i+1}},
\cdots, \circ_{\alpha_{i+p-1}}\}
=\{\circ_{\alpha_{j+1}},
\cdots, \circ_{\alpha_{j+p-1}}\},$$
which forces that $\circ_{\alpha_i}=
\circ_{\alpha_j}$. Thus $w$ has two equal submonomials
\[
x_i\circ_{\alpha_i}x_{i+1}\circ_{\alpha_{i+1}}\cdots
\circ_{\alpha_{i+r-1}}x_{i+r}=
x_j\circ_{\alpha_j}x_{j+1}\circ_{\alpha_{j+1}}\cdots
\circ_{\alpha_{j+r-1}}x_{j+r},
\]
which have height $r+1$. Similarly we can extend the
above two submonomials to any height $\geq r$.

Next we show that $j-i$ is a period of $w$. In any
extension $w_{m,m'}$ of $w$ (for some $m\geq 0$ and
$m'\geq n$), let $-m\leq l\leq m'-(j-i)$ and $t\in\ZZ$
such that $l+tp=i+s$, $0\leq s\leq p-1$. Then
\[
x_{l+(j-i)}=  x_{l+(j-i)+tp}=x_{i+s+(j-i)}=x_{j+s}
=x_{i+s}=x_{l+tp}=x_l
\]
for $-m\leq l\leq m'-(j-i)$ and
\[
\alpha_{l+(j-i)}=  \alpha_{l+(j-i)+tp}
=\alpha_{i+s+(j-i)}=\alpha_{j+s}
=\alpha_{i+s}=\alpha_{l+tp}=\alpha_l
\]
for $-m\leq l\leq m'-(j-i)-1$. Hence $w$ has a period
$j-i$. Now it follows from Lemma \ref{xxlem4.3} that
$p$ divides $j-i$.
\end{proof}

We modify \cite[Lemma 2.4]{KL00} and obtain the following

\begin{lemma}
\label{xxlem4.5}
Suppose $\XX$ is a finite operation alphabet.
Let $W$ be a set of single-branched tree monomials on
$\XX$ such that all submonomials of elements in $W$ still
belong to $W$. Suppose that, for some positive integer $d
\geq 3$, $W$ contains at most $d-1$ tree monomials of
height $d$. Then $W$ contains at most $(d-1)^3$ tree
monomials of height $h$ for all $h\geq d$.
\end{lemma}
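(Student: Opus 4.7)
The plan is to follow the classical combinatorics-on-words scheme underlying Bergman's gap theorem for associative algebras (cf.\ \cite[Lemma 2.4]{KL00}), transported to the operadic setting. By Lemma \ref{xxlem4.1}, a single-branched tree monomial of height $h$ is uniquely determined by the sequence
\[
(x_1, i_1, x_2, i_2, \dots, i_{h-1}, x_h)
\]
of labels and composition indices along its unique branch, and its submonomials correspond precisely to contiguous factors of this sequence. Encoding each position as a pair (label, composition index) over the finite alphabet $\widehat{\XX}$ of such pairs identifies $W$ with a subword-closed language and reduces the problem to the following statement about words: given a subword-closed language $W$ with $|W_d|\leq d-1$, bound $|W_h|$ for $h\geq d$.

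The proof splits into two ranges. In the short range $d\leq h\leq 2d-1$, each $w\in W_h$ will be determined by its prefix and suffix submonomials of height $d$, which overlap in at least one internal vertex; since each lies in $W_d$, there will be at most $(d-1)^2$ such $w$. In the long range $h\geq 2d-1$, the key claim to prove is that every $w$ is periodic with minimal period $p\leq d-1$. Granting this, for each fixed $p$ the tree monomial will be uniquely reconstructed from its prefix of height $d$ (which already encodes a full period), so there will be at most $|W_d|\leq d-1$ long tree monomials with minimal period $p$. Summing over $p\in\{1,\dots,d-1\}$ yields at most $(d-1)^2$ tree monomials in $W_h$ in this range. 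Both ranges therefore give $|W_h|\leq (d-1)^2\leq (d-1)^3$.

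To establish the periodicity claim, the pigeonhole principle will be applied to the $d$ submonomials of $w$ of height $d$ that start at positions $1,2,\dots,d$ along the branch: since $|W_d|\leq d-1$, two must coincide, at positions $k_1<k_2\leq d$, producing a local period $p:=k_2-k_1\leq d-1$ on the segment $[k_1,k_1+d-1]$. Iterating the pigeonhole argument after successive unit shifts of the starting window yields further local periods on overlapping segments, and Lemma \ref{xxlem4.4}, which constrains the locations of equal submonomials inside a periodic single-branched tree monomial, lets one reconcile these local periods into a single global period of length $\leq d-1$.

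The hardest step is precisely this last one: promoting partial periodicity to global periodicity of the entire tree monomial. This is the Fine--Wilf-type ingredient of the argument, and Lemma \ref{xxlem4.4} supplies the operadic analogue needed; the pigeonhole by itself provides only local information on short segments. A minor secondary point is that the composition indices $i_j$ must be tracked alongside the labels $x_j$ throughout, but bundling them into enriched letters of $\widehat{\XX}$ reduces the combinatorics faithfully to the associative word case.
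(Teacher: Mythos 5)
Your short-range case ($d\le h\le 2d-1$) is correct and matches the paper's. The long-range case, however, rests on a claim that is false: you assert that every $w\in W$ of height $h\ge 2d-1$ is \emph{globally} periodic with minimal period $p\le d-1$. This is not so. Take $\XX$ with two binary operations $a,b$, and let $W$ be the submonomial closure of $\{\,a\circ_1 a\circ_1\cdots\circ_1 a\circ_1 b\,\}$ (a string of $a$'s with all indices $1$, ending in a single $b$). For $d=3$ the elements of $W$ of height $3$ are exactly $a\circ_1 a\circ_1 a$ and $a\circ_1 a\circ_1 b$, so $|W_3|=2=d-1$, yet $a\circ_1\cdots\circ_1 a\circ_1 b$ of any height $h$ has no period $p<h$ at all (the final $b$ blocks every candidate period). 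The conclusion of the lemma still holds for this $W$, but your reduction ``$w$ is determined by its height-$d$ prefix, once $p$ is fixed'' does not: the prefix $a\circ_1 a\circ_1 a$ is consistent both with $a^{\circ h}$ and with $a^{\circ(h-1)}\circ_1 b$. The fact that your proposed argument would yield the stronger bound $(d-1)^2$ rather than $(d-1)^3$ is itself a warning sign.

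The missing idea is that the periodicity can only be guaranteed on a large \emph{middle} segment, not on the whole monomial: the top and bottom pieces, each of height at most $d-p$, need not participate in the period at all. This is exactly the decomposition $w=w_1w_2w_3$ in the paper's Claim, proved by induction on $h$: one shows that as $h$ grows, the periodic middle $w_2$ can be pushed down past $w_1$ whenever $w_1$ would otherwise become too tall, using Lemma~\ref{xxlem4.4} to align the newly found local period with the existing one. The bound $(d-1)^3$ then comes from independently choosing the bottom height-$d$ submonomial, the top height-$d$ submonomial, and the (period-determined) middle — three factors of $d-1$, not two. Lemma~\ref{xxlem4.4} as stated cannot do what you ask of it: it takes \emph{as hypothesis} that $w$ is already periodic with minimal period $p$ and tells you that two equal long submonomials are offset by a multiple of $p$; it is a rigidity statement inside a periodic word, not a tool for propagating periodicity outward past a non-periodic boundary. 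Your ``iterating pigeonhole after unit shifts'' would produce overlapping local periods, but without the three-piece decomposition there is no way to assemble them into a single global period — and indeed, as the example shows, no such global period exists in general.
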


\begin{proof} Let $w$ be a single-branched tree monomial
of height $h$ of the form \eqref{E4.0.2} and let $W_h$
be the subset of $W$ consisting of $w\in W$ of height $h$.

If $h\leq 2d-1$, then $w$ is completely determined by
its top and bottom submonomials of height $d$. There are
at most $(d-1)^2$ possibilities for this case. So
$$|W_h|\leq (d-1)^2< (d-1)^3.$$

If $2d\leq h\leq 3d-2$, then $w$ is completely determined by
its top and bottom submonomials of height $d$ and the
submonomial $x_d\cdots x_{2d-1}$. There are
at most $(d-1)^3$ possibilities for this case. So
$$|W_h|\leq (d-1)^3.$$

For other cases, we need to use the following claim.

\textbf{Claim}: Suppose $w$ has height $\geq 2d-1$. Then
$w=w_1w_2w_3(=w_1\circ_{\beta_1} w_2\circ_{\beta_2}w_3)$
where $w_2$ is periodic of minimal period $p\leq d-1$,
$\h(w_2)\geq p+d$, $\h(w_1)\leq d-p$ and $\h(w_3)\leq d-p$.
The periodic submonomial
$w_2$ is $x_{j+1}\cdots x_{r}$ for some integers $j\leq d-p$
and $r\geq h-(p-d)$.

Note that our lemma for $h\geq 3d-1(> 2d-1)$ follows from the claim.
In fact, by the claim, $w=x_1\cdots x_h$ for $h\geq 3d-1$
is uniquely determined by its bottom and top submonomials of
height $d$ (each has at most $d-1$ possibilities by assumption),
and the periodic submonomial $w'_2:=x_{d-p+1}\cdots x_{h-(d-p)}$.
Note that $w'_2$ is a  submonomial of $w_2$ with minimal period
$p\leq d-1$. Hence $w'_2$ has at most $d-1$ possibilities,
determined by it top submonomial of height $d$.
As a result, there are at most $(d-1)^3$ possibilities for $w$
as desired.

\noindent
{\it Proof of the Claim:} We prove this claim by induction on $h$.

Initial step: Suppose $h=2d-1$. Then $w=x_1x_2\cdots x_{2d-1}$
has $d$ submonomials of height $d$, all belonging to $W$ by
assumption. Hence two of these tree monomials must be equal, say
\[
x_{j+1}\cdots x_{j+d}=x_{j+p+1}\cdots x_{j+p+d}
\]
where $j\geq 0$, $p\geq1$, $j+p\leq d-1$, $p$ chosen as small as
possible. Then $x_{j+1}\cdots x_{j+d}$ and
$x_{j+p+1}\cdots x_{j+p+d}$ have a nonempty overlap and thus
\[
w_2:=x_{j+1}\cdots  x_{j+p+d}
\]
is a periodic submonomial (of $w$) of minimal period $p\leq d-1$
and of height $\h(w_2)=p+d$. The bottom submonomial
$w_1:=x_1\cdots x_j$ and the top submonomial
$w_3:=x_{j+p+d+1}\cdots x_{2d-1}$ are of height $\leq d-p-1<d-p$.
Thus the claim holds for $h=2d-1$ and we finish the initial step.

Inductive step: Suppose the claim is true for an $h\geq 2d-1$.
Consider $w=x_1x_2\cdots x_{h+1}\in W$ of height $h+1$. By
hypothesis,
\[
w=x_1(x_2\cdots x_j)(x_{j+1}\cdots x_{j+r})
(x_{j+r+1}\cdots x_hx_{h+1})
\]
where $\h(x_2\cdots x_j)=j-1\leq d-p$,
$\h(x_{j+r+1}\cdots x_hx_{h+1})=h-j-r+1\leq d-p$,
and $x_{j+1}\cdots x_{j+r}$ is periodic of minimal period
$p\leq d-1$ and height $r\geq p+d$. If $j-1<d-p$, i.e.,
$j\leq d-p$, then the claim follows by taking
\[
w_1=x_1\cdots x_j,\quad
w_2=x_{j+1}\cdots x_{j+r},\quad
w_3=x_{j+r+1}\cdots x_hx_{h+1}.
\]
Now suppose $j-1=d-p$. We will show that the periodicity of
\[
x_{j+1}\cdots x_{j+r}
=x_{j+1}\circ_{\alpha_{j+1}}\cdots \circ_{\alpha_{j+r-1}}x_{j+r}
\]
can be extended down to include the term $x_j$, which completes
our inductive step. Note that
\[
j+r=d-p+1+r\geq d-p+1+d+p>2d.
\]
Thus $x_2\cdots x_{2d}$ is a proper submonomial of
$x_1\cdots x_{j+r}$ and contains two equal submonomials of
height $d$, say,
\[
x_{i+1}\cdots x_{i+d}=x_{i+n+1}\cdots x_{i+n+d},
\]
where $i\geq 1$, $n\geq 1$ and $i+n\leq d$. Since
$j=d-p+1\leq d$, both of these submonomials have an overlap
with the periodic tree monomial $x_{j+1}\cdots x_{j+r}$ and
the overlap contains at least their top (i.e., right) $d-(j-1)=p$
terms. By Lemma \ref{xxlem4.4}, we have that $p$ divides $n$
(say, $n=cp\geq p$ for some integer $c>0$) and hence
\[
i+d> d\geq j=d-p+1\geq d-n+1\geq i+1.
\]
Thus $x_j=x_{j+n}$ and ${\alpha_j}={\alpha_{j+n}}$. Note that
$j+1\leq j+p<j+r$ and $j+1\leq j+n< j+d <j+r$. By the
periodicity, we have that
\[
x_{j+p}=x_{j+n-(c-1)p}=x_{j+n}=x_j \text{ and }
\alpha_{j+p}=\alpha_{j+n-(c-1)p}=\alpha_{j+n}=\alpha_j,
\]
i.e., the periodicity of $x_{j+1}\cdots x_{j+r}$ extends down
to include $x_j$ as desired.

This finishes the inductive step, then the claim, and finally
the assertion in the lemma.
\end{proof}

The following example shows that the original
\cite[Lemma 2.4]{KL00} is not true for the setting of operads,
i.e., if we replace $d-1$ by $d$ in Lemma \ref{xxlem4.5}, then
the statement does not hold any further.

\begin{example}
\label{xxex4.6}
Let $W$ be the set of single-branched tree monomials of the form
\[
x\circ_{i_1}x\circ_{i_2}\cdots \circ_{i_{n-1}}x
\]
where $\Ar(x)=2$, $n\geq 1$, at most one composition index $i_j$
equals $2$ and the other indices are all $1$. It is easy to see
that all submonomials of elements in $W$ are still in $W$ and that,
for $n\geq 1$, $W$ contains exactly $n$ distinct monomials of height $n$.
\end{example}

\begin{lemma}
\label{xxlem4.7}
Let $\PP$ be a locally finite operad with $\GK(\PP)<2$.
Let $a$ and $b$ be two positive real numbers.
\begin{enumerate}
\item[(1)]
There are infinitely many integer $n$ such that
$\dim \PP(n) < a n-b$.
\item[(2)]
Suppose $\PP$ is finitely generated by $\VV$. Then
there are infinitely many integer $n$ such that
$\dim (\sum_{i=0}^n \VV^i)/(\sum_{i=0}^{n-1} \VV^i) < a n-b$.
\end{enumerate}
\end{lemma}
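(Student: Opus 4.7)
The plan is to prove both parts by contraposition, exploiting the elementary observation that a nonnegative sequence bounded below by $an-b$ for all sufficiently large $n$ has partial sums growing at least quadratically. Concretely, if $c_n \geq an-b$ for all $n \geq N$, then $\sum_{i=0}^n c_i \geq \frac{a}{2}n^2 - C n$ for some constant $C=C(a,b,N)$, which forces $\limsup_{n\to\infty}\log_n (\sum_{i=0}^n c_i) \geq 2$, since $\log_n(\frac{a}{2}n^2) = 2 + \log_n(a/2) \to 2$.

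For part (1), I would take $c_n := \dim \PP(n)$ and suppose for contradiction that only finitely many $n$ satisfy $\dim \PP(n) < an - b$; the above observation then yields $\GK(\PP) \geq 2$, contradicting the hypothesis $\GK(\PP) < 2$.

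For part (2), I would set $W_n := \sum_{i=0}^n \VV^i$ and $c_n := \dim(W_n/W_{n-1}) = \dim W_n - \dim W_{n-1}$ (with $W_{-1} := 0$), so that $\dim W_n = \sum_{i=0}^n c_i$. By Lemma \ref{xxlem3.3}, $\GK(\PP) = \limsup_n \log_n \dim W_n$. Assuming for contradiction that $c_n \geq an - b$ for all $n$ past some threshold, the same summation estimate gives $\dim W_n \geq \frac{a}{2}n^2 - Cn$, hence $\GK(\PP) \geq 2$, again contradicting the hypothesis.

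I do not anticipate any real obstacle: the statement is, at its heart, the contrapositive of the definition of $\GK$ combined with the quadratic growth of partial sums of a linearly growing sequence. The only nontrivial bookkeeping is invoking Lemma \ref{xxlem3.3} in part (2) so that $\GK(\PP)$ can be read off from the filtration $\{W_n\}$ rather than from the sequence $\{\dim \PP(n)\}_n$ directly.
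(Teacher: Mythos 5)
Your proof is correct and follows essentially the same contrapositive argument as the paper: assume $\dim\PP(n)\geq an-b$ (resp. $\dim(W_n/W_{n-1})\geq an-b$) eventually, sum to get at least quadratic growth, and contradict $\GK(\PP)<2$ via Lemma \ref{xxlem3.3} in part (2).
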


\begin{proof} (1)
Suppose to the contrary that $\dim \PP(n)\geq an-b$
for all $n\gg 0$. Then, there is an $m$, such that
$$\GK(\PP)=\limsup_{n\to \infty}
\log_n (\sum_{i=0}^n \dim \PP(i))
\geq \limsup_{n\to \infty}
\log_n (\sum_{i=m}^n (a i-b))=2,$$
yielding a contradiction.
The proof of part (2) is similar to the above.
\end{proof}

\subsection{Gap theorem for ns operads}
\label{xxsec4.2}
In this subsection, we first prove three lemmas and
then use them to prove an analogue of Bergman's gap
theorem for finitely generated ns operads.

Roughly speaking, the following lemma says that,
if the GK-dimension of a finitely generated operad is less
than $2$, then there exists a uniform upper bound for
the heights of maximal subtrees that are rooted at the
pivot branch and do not contain the pivot top internal vertex.

\begin{lemma}
\label{xxlem4.8}
Let $\XX$ be a finite operation alphabet and $\PP=\TX/\II$.
Write $\VV=\FF \XX$. Suppose $\dim
(\sum_{i=0}^d \VV^i)/(\sum_{i=0}^{d-1} \VV^i)\leq d-2$ for
some $d$ {\rm{(}}or $\GK(\PP)<2${\rm{)}}.
Then there exists a positive integer $M_1$ such that,
for all $\tau\in\Tr(\Irr(\II))$,
\[
\h(\Twig_{\tau}(v,v'))<M_1
\]
where $v\in \Piv(\tau)$, $v'\in \Int(\tau)\setminus\Piv(\tau)$.
\end{lemma}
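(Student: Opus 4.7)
The plan is to argue by contradiction, using Lemma~\ref{xxlem4.5} to translate the dimension hypothesis into a uniform per-height bound on single-branched submonomials in $\Irr(\II)$, and then to exploit the geometry of two long single-branched submonomials of $\tau$ that share a long common prefix but diverge at $v$.

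First I would unify the two hypotheses. If $\GK(\PP)<2$ is assumed instead of a specific $d$, then Lemma~\ref{xxlem4.7}(2) applied with $a=1$, $b=2$ produces some $d$ with $\dim(\sum_{i=0}^d\VV^i)/(\sum_{i=0}^{d-1}\VV^i)\le d-2$. Since $\Irr(\II)$ is an $\FF$-basis of $\PP$ by Proposition~\ref{xxpro2.4}, this says $\Irr(\II)$ contains at most $d-2$ tree monomials of weight $d$; in particular, at most $d-1$ of these are single-branched. The set $W$ of single-branched tree monomials in $\Irr(\II)$ is closed under taking single-branched submonomials, so Lemma~\ref{xxlem4.5} applies and yields the uniform per-height bound $|\{w\in W:\h(w)=h\}|\le C:=(d-1)^3$ for every $h\ge d$.

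Now suppose the conclusion of the lemma fails. Then for each $M\in\NN^*$ I can pick $\tau_M\in\Tr(\Irr(\II))$, $v_M\in\Piv(\tau_M)$ and $v'_M\in\Int(\tau_M)\setminus\Piv(\tau_M)$ with $h_M:=\h(\Twig_{\tau_M}(v_M,v'_M))\ge M$. Inside each $\tau_M$ I would extract two single-branched submonomials that share a long common prefix but diverge at $v_M$: the pivot branch $\pi_M$ of $\tau_M$, which by maximality of the pivot is a single-branched submonomial of weight $H_M\ge i_M+h_M$, where $i_M:=\h_{\tau_M}(v_M)\ge 1$; and the \emph{twig-branch} $\beta_M$ obtained by following the pivot from the root down to $v_M$ and then passing through $v'_M$ along a deepest path of the twig, a single-branched submonomial of weight $i_M+h_M$. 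Both lie in $W$, have heights tending to infinity with $M$, agree on the prefix of length $i_M$, and differ at position $i_M+1$. The contradiction would then come from the periodic structure built into the proof of Lemma~\ref{xxlem4.5} (and resting on Lemma~\ref{xxlem4.4}): once $M$ is large, each of $\pi_M,\beta_M$ decomposes as $w_1w_2w_3$ with periodic middle $w_2$ of minimal period $p\le d-1$ and $\h(w_1),\h(w_3)\le d-p$. Since $i_M$ grows with $M$, the divergence point $v_M$ eventually sits deep inside both periodic middles, so the agreement of the periodic patterns of $\pi_M$ and $\beta_M$ on a long window inside the shared prefix forces, via Lemma~\ref{xxlem4.4}, the two periodic patterns to coincide; but then the continuations past $v_M$ are forced to agree, contradicting divergence at $v_M$. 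Equivalently, one can sharpen this into a direct count: sliding the divergence along the shared periodic prefix produces more than $d-2$ pairwise distinct tree monomials of weight $d$ in $\Irr(\II)$, violating the hypothesis.

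The main obstacle will be the case analysis in this last step, since a priori $\pi_M$ and $\beta_M$ may have different minimal periods $p_\pi\neq p_\beta$, and the shared prefix of length $i_M$ need not lie entirely inside both periodic middles. One handles this by taking $M$ (and hence $i_M$ and $h_M$) large enough to push $v_M$ simultaneously deep inside both periodic regions, and then invoking Lemma~\ref{xxlem4.4} twice — once for each of $\pi_M,\beta_M$ — to force $p_\pi=p_\beta$ and to identify the two continuations past $v_M$, yielding the desired contradiction and hence a uniform bound $M_1$ expressible in $d$ alone.
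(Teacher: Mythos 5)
Your strategy is genuinely different from the paper's, and it has a gap that I don't see how to close without essentially reverting to the paper's argument.

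The paper's proof does not use periodicity or Lemma~\ref{xxlem4.5} at all. Given a counterexample tree $\tau_n$, it passes to the subtree rooted at $\Parent(v_n)$ consisting of the pivot branch above $v_n$ (of length $m\ge h_n$ by maximality of the pivot) together with the pivot branch of the twig (of length $h_n$), obtaining a two-branched ``fork'' $T''_n=(x\circ_j(x_1\cdots x_m))\circ_i(y_1\cdots y_{h_n})$. It then reads off $n-1$ pairwise distinct weight-$n$ submonomials $(x\circ_j(x_1\cdots x_s))\circ_i(y_1\cdots y_{n-s-1})$, $1\le s\le n-1$, all submonomials of $T_n$ and hence in $\Irr(\II)$, giving $\dim(\sum_{i\le n}\VV^i)/(\sum_{i\le n-1}\VV^i)\ge n-1$ for every $n$ and an immediate contradiction. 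No single-branched counting lemma is needed, and the argument is insensitive to the height of $v_n$.

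The gap in your proposal is the assertion ``Since $i_M$ grows with $M$.'' Nothing in the setup forces $i_M:=\h(v_M)$ to grow. The hypothesis only makes the twig height $h_M$ (and hence the pivot continuation above $v_M$, since $m\ge h_M$) large; the vertex $v_M$ itself could sit at height $1$ for every $M$, e.g.\ a binary vertex just above the root whose two children each head long chains. In that case $\pi_M$ and $\beta_M$ share only a bounded prefix, the divergence point lies in the short bottom piece $w_1$ of each decomposition from Lemma~\ref{xxlem4.5}, and the periodicity/Lemma~\ref{xxlem4.4} machinery has nothing to bite on: two long single-branched elements of $W$ agreeing only near the root are entirely compatible with the $(d-1)^3$ per-height bound. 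Your closing ``sliding the divergence'' remark would also need the divergence to lie inside a shared periodic window, so it inherits the same defect. The fix is precisely what the paper does: forget the shared prefix, root the fork at $\Parent(v_M)$, and count fork-shaped submonomials directly; once you do that, the detour through Lemma~\ref{xxlem4.5} is unnecessary.
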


\begin{proof}
Suppose to the contrary that for each $n\in\NN$ there is a tree
monomial $T_n\in \Irr(\II)$ with underlying PRT $\tau_n$ and a
maximal subtree $\tau_n':=\msub_{\tau_n}(v_n,v_n')$ such that
$h_n:=\h(\tau_n')\geq n$, where $v_n\in \Piv(\tau_n)$ and
$v'_n\in \Int(\tau_n)\setminus\Piv(\tau_n)$. Assume that the
vertices in $\Piv(\tau_n')$ are labelled from $v_n$ to
$\TIV(\tau_n')$ by generating operations $x,y_1,y_2,\dots,y_{h_n}$,
and that the vertices in $\Int(\tau_n)\cap\Piv(\tau_n)$ with height
$\geq \h(v_n)$ are labelled from $v_n$ to $\TIV(\tau_n)$ by
$x,x_1,x_2,\dots,x_m$ (see Figure \ref{xxfig5-xxlem4.8}, where
only a subset of $\Vert(\tau_n)$ are drawn and the pivot branch
is drawn vertically).
\begin{figure}[h]
\centering
\begin{tikzpicture}[scale=0.6]
  \tikzstyle{every node}=[draw,thick,circle,fill=white,minimum size=6pt, inner sep=1pt]
    \node (r) at (0,0) [fill=black,minimum size=0pt,label=right:$r$] {};
    \node (1) at (0,1)  {};
    \node (2) at (0,2.5) [label=right:$v_n$]{$\,~x~\, $};
    \node (3a) at (0,4.5)  {$\,x_1\,$};
    \node (3b) at (-1.5,3.5)  {$~y_1\,$};
    \node (4a) at (0,6.5)  {$x_m$};
    \node (4b) at (-3,5)  {$y_{_{h_n}}$};
    \draw  (r) -- (1);
    \draw [dotted,line width=0.6pt] (1)--(2);
    \draw (2)--(3a);
    \draw              (2)--(3b);
    \draw [dotted,line width=0.6pt] (3a)--(4a);
    \draw [dotted,line width=0.6pt] (3b)--(4b);
\node[minimum size=0pt,inner sep=0pt,label=below:$T_n$] (name) at (0,-0.5){};
%Name of the tree
\end{tikzpicture}
\caption{Tree monomial $T_n$}
\label{xxfig5-xxlem4.8}
\end{figure}
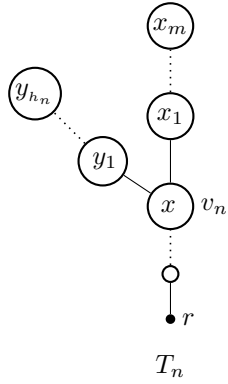

It follows from the definition of pivot branch that
$m\geq h_n\geq n$. Consider the submonomial $T''_n$ of $T_n$
with underlying PRT $\tau''_n\subseteq \tau_n$ for which
$\Root(\tau''_n)=\Parent_{\tau_n}(v_n)$ and
\[
\Int(\tau''_n)=\Piv(\tau_n')\cup \{v\in\Piv(\tau_n)
\cap\Int(\tau_n):\h(v)\geq \h(v_n)\}.
\]
Without loss of generality, we assume $T''_n$ is of the
form in Figure \ref{xxfig5-xxlem4.8} and denote it by
\[
T''_n=(x\circ_j(x_1x_2\cdots x_{m}))\circ_i(y_1y_2\cdots y_{h_n})
\]
where $i$ and $j$ are proper composition indices and $i<j$.
The following are distinct submonomials of $T_n''$ of weight $n$:
\[
(x\circ_{j}(x_1\cdots x_s))\circ_i(y_1\cdots y_{n-s-1}), \ 1\leq s\leq n-1.
\]
These submonomials are pairwise distinct since their
underlying PRTs are pairwise distinct. Thus $\Irr(\II)$ contains
at least $n-1$ elements of weight $n$, namely,
$\dim (\sum_{i=0}^n \VV^i)/(\sum_{i=0}^{n-1} \VV^i)\geq n-1$
for all $n$, yielding a contradiction.
\end{proof}

The following lemma says that, under suitable conditions,
if an internal vertex $v$ in the pivot branch has a child
that is an internal vertex and not in the pivot branch,
then $v$ must be either ``close'' to the root or ``close''
to the top of the PRT.

\begin{lemma}
\label{xxlem4.9}
Let $\XX$ be a finite operation alphabet and $\PP=\TX/\II$.
Write $\VV=\FF \XX$. Assume $\dim
(\sum_{i=0}^d \VV^i)/(\sum_{i=0}^{d-1} \VV^i)\leq d-3$ for
some $d\geq 3$ {\rm{(}}or $\GK(\PP)<2${\rm{)}}. Suppose that
\[
W:=\{T\in\Irr(\II): T \text{ not single-branched }\}\not=\emptyset.
\]
Then there exists a positive integer $M_2$ such that,
for all $\tau\in \Tr(W)$ and all $v\in\Piv(\tau)$ such that
there is a maximal subtree $\Twig(v,v')$ for some
$v'\in\Int(\tau)\setminus\Piv(\tau)$, the following inequality
holds
\[
\min\{\h(v),\h(\tau)-\h(v)\}<M_2.
\]
\end{lemma}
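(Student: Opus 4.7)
Following the strategy of Lemma \ref{xxlem4.8}, I would proceed by contradiction and exploit both the room above and the room below the offending pivot vertex $v$. Suppose no such $M_2$ works; then for every $M$, there is a tree monomial $T\in W$ with underlying PRT $\tau$, a pivot vertex $v=u_a\in\Piv(\tau)$ (where $u_1,u_2,\ldots,u_m$ denote the internal vertices of the pivot branch in order of increasing height and $m=\h(\tau)$), and an off-pivot internal child $v'\in\Int(\tau)\setminus\Piv(\tau)$ of $v$, such that both $a\geq M$ and $m-a\geq M$.

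Fix $d\geq 3$ with $\dim(\sum_{i=0}^d\VV^i)/(\sum_{i=0}^{d-1}\VV^i)\leq d-3$ (directly from the first hypothesis, or, when $\GK(\PP)<2$, provided by Lemma \ref{xxlem4.7}(2) for all sufficiently large $d$). Apply the previous paragraph with $M:=d-1$ to obtain $T$, $\tau$, $a=\h(v)$, $m=\h(\tau)$, and $v'$. For each $p=0,1,\ldots,d-2$, I would take
\[
V'_p:=\{u_{a-p},u_{a-p+1},\ldots,u_{a+d-2-p}\}\cup\{v'\},
\]
consisting of $d-1$ consecutive pivot internal vertices, centered so as to include $u_a=v$, together with the single off-pivot internal vertex $v'$. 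The inequalities $a\geq d-1$ and $m-a\geq d-1$ ensure every required index is legitimate. Checking the two conditions of Lemma \ref{xxlem1.9} is routine: the unique vertex in $V'_p$ whose parent lies outside $V'_p$ is $u_{a-p}$, and every other element is reachable from $u_{a-p}$ by iterated children (the pivot vertices along the branch, and $v'$ through $u_a$). This yields a submonomial $T^{(p)}$ of $T$ of weight exactly $d$.

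Since $T\in W\subseteq\Irr(\II)$ and any submonomial of an irreducible tree monomial is itself irreducible, each $T^{(p)}\in\Irr(\II)$. The main obstacle is showing the $d-1$ monomials $T^{(0)},\ldots,T^{(d-2)}$ are pairwise distinct. For this, one observes that the underlying PRT of $T^{(p)}$ consists of a pivot chain of $d-1$ internal vertices with the single side vertex $v'$ (of fixed arity inherited from $\tau$) attached at position $p+1$ counted from the bottom of the chain; any PRT isomorphism must preserve the root--leaf orientation and the planar order on children, so attachment at different heights along the chain produces non-isomorphic PRTs, hence distinct tree monomials.

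By Proposition \ref{xxpro2.4}, $\Irr(\II)$ is an $\FF$-basis of $\PP$, so the existence of these $d-1$ distinct weight-$d$ irreducibles forces $\dim(\sum_{i=0}^d\VV^i)/(\sum_{i=0}^{d-1}\VV^i)\geq d-1$, contradicting the choice of $d$. Hence $M_2$ must exist, and the argument delivers the explicit value $M_2=d-1$. The trickiest point to pin down cleanly is the non-isomorphism step: careful bookkeeping of the planar order at $u_a$ (which has both $u_{a+1}$ and $v'$ as children of the submonomial when $p<d-2$, but only $v'$ when $p=d-2$) is what ultimately rules out any hidden isomorphism arising from symmetries elsewhere in $\tau$.
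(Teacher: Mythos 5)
Your proof is correct and follows essentially the same strategy as the paper's: assume the negation, restrict attention to the subtree formed by the pivot chain together with the off-pivot internal child $v'$, and count the pairwise distinct submonomials of a fixed weight obtained by sliding a window of consecutive pivot vertices (always containing $v$) while keeping $v'$ attached. The only cosmetic difference is the indexing -- your $p\in\{0,\dots,d-2\}$ yields $d-1$ distinct irreducible monomials of weight $d$, whereas the paper's $s\in\{1,\dots,n-1\}$ with $n=d-1$ yields $d-2$ of them -- but both already exceed the bound $d-3$ and produce the same contradiction.
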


\begin{proof}
Suppose to the contrary that for each $n\in\NN^*$ there exist
$T_n\in W$ with $\Tr(T_n)=\tau_n$, $v_n\in\Piv(\tau_n)$, and
$v'_n\in\Int(\tau_n)\setminus\Piv(\tau_n)$ such that $\tau_n$
has a maximal subtree $\Twig(v_n,v_n')$ and
$\min\{\h(v_n),\h(\tau_n)-\h(v_n)\}\geq n$. Without loss of
generality, we suppose that $\Parent(v_n')=v_n$. Consider the
subtree $\tau_n'\subseteq \tau_n$ that has internal vertices
\[
\Int(\tau'_n)=\left(\Int(\tau_n)\cap\Piv(\tau_n)\right)\cup\{v_n'\}.
\]
Assume that the vertex $v_n'$ is labelled by generating
operation $x'$ and that the internal vertices in $\Piv(\tau_n)$
are labelled from bottom to top by generating operations
\[
x_m,x_{m-1},\dots,x_1,x,y_1,y_2,\dots,y_q
\]
for some integers $m\geq n-1$, $ q\geq n$
(see Figure \ref{xxfig6-xxlem4.9}, where only a subset of
$\Vert(\tau'_n)$ are drawn).
\begin{figure}[h]
\centering
\begin{tikzpicture}[scale=0.5]
  \tikzstyle{every node}=[draw,thick,circle,fill=white,minimum size=6pt, inner sep=1pt]
    \node (r) at (0,0) [fill=black,minimum size=0pt,label=right:$r$] {};
    \node (1) at (0,1.5)  {$x_m$};
    \node (2) at (0,3.5){$\,x_1\,$};
    \node (3a) at (0,5.5)  [label=right:$v_n$]{$~\, x~\, $};
    \node (3b) at (-2,7)  [label=left:$v_n'$]{$~x'\,$};
    \node (4a) at (0,7.5)  {$~y_1\,$};
    \node (5) at (0,9.5)  {$~y_q\,$};
    \draw  (r) -- (1);
    \draw [dotted,line width=0.6pt] (1)--(2);
    \draw (2)--(3a)--(4a);
    \draw              (3a)--(3b);
    \draw [dotted,line width=0.6pt] (4a)--(5);

\node[minimum size=0pt,inner sep=0pt,label=below:$T_n'$] (name) at (0,-0.5){};
%Name of the tree
\end{tikzpicture}
\caption{Submonomial $T'_n$ of $T_n$}
\label{xxfig6-xxlem4.9}
\end{figure}
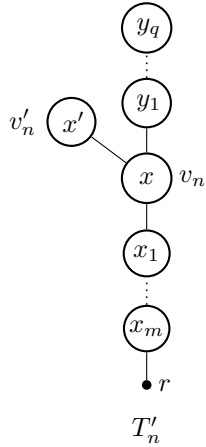

Consider following submonomial of $T_n'$ of weight $n+1$
\[
(x_sx_{s-1}\cdots x_1xy_1y_2\cdots y_{n-s-1})\circ_{i_s}x', \
1\leq s\leq n-1
\]
with suitable composition indices $i_{s}$. These submonomials are
pairwise distinct since so are their underlying PRTs. Thus
$\Irr(\II)$ contains at least $n-1$ elements of weight $n+1$,
namely, $\dim
(\sum_{i=0}^{n+1} \VV^i)/(\sum_{i=0}^{n} \VV^i)\geq n-1$, yielding
a contradiction.
\end{proof}

The following lemma is a generalization of Lemma \ref{xxlem4.5}.

\begin{lemma}
\label{xxlem4.10}
Retain the hypotheses of Lemma \ref{xxlem4.9}. Then there exist
positive integers $d_1$ and $d_2$ such that $\Irr(I)$ contains
at most $d_1$ tree monomials of weight $h$ for all $h\geq d_2$.
\end{lemma}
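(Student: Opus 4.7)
The plan is to decompose any tree monomial $T\in\Irr(\II)$ of weight $h$ into three submonomials---a ``bottom'', a ``middle'', and a ``top''---so that the bottom and top have universally bounded size, while the middle is single-branched and therefore controlled by Lemma~\ref{xxlem4.5}. The three preceding lemmas fit together for this: Lemma~\ref{xxlem4.8} bounds the heights of off-pivot maximal subtrees by $M_1$, Lemma~\ref{xxlem4.9} forces off-pivot attachments to occur within distance $M_2$ of either end of the pivot branch, and Lemma~\ref{xxlem4.5} bounds the count of single-branched tree monomials in $\Irr(\II)$.

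First I would fix the constants. Let $A:=\max\{\Ar(x):x\in\XX\}$, which is finite. Since $\GK(\PP)<2$, Lemma~\ref{xxlem4.7}(2) yields an integer $d\geq\max(3,M_1,M_2)$ with $\dim\bigl(\sum_{i=0}^d\VV^i\bigr)\big/\bigl(\sum_{i=0}^{d-1}\VV^i\bigr)\leq d-3$. Since $\Irr(\II)$ is an $\FF$-basis of $\TX/\II$ (Proposition~\ref{xxpro2.4}), this quotient dimension counts the weight-$d$ elements of $\Irr(\II)$; in particular at most $d-3<d-1$ single-branched tree monomials of height $d$ lie in $\Irr(\II)$. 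Because submonomials of single-branched tree monomials are single-branched and submonomials of irreducible tree monomials remain irreducible (divisibility propagates upward from $T'$ to $T$), the set $W$ of single-branched tree monomials in $\Irr(\II)$ satisfies the hypothesis of Lemma~\ref{xxlem4.5}, so $|W_h|\leq(d-1)^3$ for every $h\geq d$. This already handles the single-branched case.

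Next I would decompose a non-single-branched $T\in\Irr(\II)$ of weight $h$. Writing $\tau=\Tr(T)$, $H=\h(\tau)$, and letting $v_0,v_1,\ldots,v_{H-1}$ be the internal vertices of the pivot branch ordered by height, Lemma~\ref{xxlem4.9} says that every pivot vertex of height in $[M_2,H-M_2]$ has no off-pivot internal children. By Lemma~\ref{xxlem1.9}, the set $V_{\mathrm{mid}}:=\{v_i:M_2\leq\h(v_i)\leq H-M_2\}$ then defines a single-branched submonomial $T^{\mathrm{mid}}$ of $T$; symmetric constructions, using that every off-pivot subtree has height $<M_1$ (and thus weight $\leq A^{M_1}-1$) by Lemma~\ref{xxlem4.8}, yield submonomials $T_{\mathrm{bot}}$ and $T_{\mathrm{top}}$ that collect the low-height and high-height pivot vertices together with all their off-pivot subtrees. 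Each of $T_{\mathrm{bot}}$ and $T_{\mathrm{top}}$ contains at most $M_2$ pivot vertices of arity $\leq A$ plus off-pivot subtrees of bounded weight, so their weights are bounded by a constant $B=B(A,M_1,M_2)$.

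To conclude, the numbers $C_{\mathrm{bot}}$ and $C_{\mathrm{top}}$ of possible $T_{\mathrm{bot}}$'s and $T_{\mathrm{top}}$'s are finite (they are tree monomials of bounded weight over the finite alphabet $\XX$); $T^{\mathrm{mid}}$ is irreducible as a submonomial of $T$, its weight $h-\wt(T_{\mathrm{bot}})-\wt(T_{\mathrm{top}})$ is determined by the other two pieces, and hence it contributes at most $(d-1)^3$ possibilities by the bound from Lemma~\ref{xxlem4.5}; reassembling $T$ via partial grafting requires at most $A^2$ choices of attachment leaves. Together with the $(d-1)^3$ bound for single-branched $T$, this produces a uniform $d_1=(C_{\mathrm{bot}}C_{\mathrm{top}}A^2+1)(d-1)^3$ bounding the count for all $h\geq d_2$, where $d_2$ is chosen so large that every tree of weight $\geq d_2$ satisfies $H\geq 2M_2$ (any $d_2\geq A^{2M_2}$ will do). The main technical obstacle will be verifying that the proposed vertex-sets genuinely satisfy the subtree axioms of Lemma~\ref{xxlem1.9} and that partial grafting exactly inverts the decomposition; everything else is bookkeeping with the already-proved estimates.
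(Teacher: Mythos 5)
Your proposal is correct and follows essentially the same route as the paper's proof: decompose each irreducible tree monomial into a bounded bottom piece, a single-branched middle piece controlled by Lemma~\ref{xxlem4.5}, and a bounded top piece, with Lemmas~\ref{xxlem4.8} and~\ref{xxlem4.9} justifying the decomposition. The only differences are cosmetic (the paper cuts purely by height at levels $M_1+M_2$ and $\h(\tau)-M_2$ so that each piece is a height-interval, whereas you take pivot-only vertices between heights $M_2$ and $\h(\tau)-M_2$ for the middle and absorb off-pivot subtrees into the adjacent pieces), together with a couple of small bookkeeping slips---the reassembly count is bounded but not by $A^2$ (the lower attachment leaf can be any of the up-to-$1+(A-1)B$ leaves of $T_{\mathrm{bot}}$), and $d_2$ must also be taken large enough that $\h(T^{\mathrm{mid}})\geq d$ so Lemma~\ref{xxlem4.5} applies---neither of which affects the validity of the argument.
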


\begin{proof}
Let $M_1$ and $M_2$ be the same as in Lemmas \ref{xxlem4.8} and
\ref{xxlem4.9}. Suppose $T\in \Irr(I)$ and denote $\tau:=\Tr(T)$
and $n:=\wt(\tau)$. Assume that  $\h(T)$ is large enough for
the following decomposition (say, $\h(\tau)>2(M_1+2M_2)$). Let
$T_1$, $T_2$ and $T_3$ be the submonomials of $T$ whose underlying
PRTs $\tau_1$, $\tau_2$ and $\tau_3$ have internal vertices
\begin{align*}
\Int(\tau_1)&=\{v\in\Int(\tau):\h(v)\leq M_1+M_2\},\\
\Int(\tau_2)&=\{v\in\Int(\tau):M_1+M_2\leq \h(v)\leq \h(\tau)-M_2\},\\
\Int(\tau_3)&=\{v\in\Int(\tau):\h(v)\geq \h(\tau)-M_2\}.
\end{align*}
These subtrees are well-defined. In fact, by Lemmas \ref{xxlem4.8}
and \ref{xxlem4.9}, within each $\Int(\tau_i)$ vertices are
connected by the parent function of $\tau$ and $\tau$ has a
unique internal vertex $v_1$ of height $M_1+M_2$ (respectively,
$v_2$ of height $\h(\tau)-M_2$), whose parent is the root of
$\tau_2$ (respectively, $\tau_3$). Note that, by Lemmas
\ref{xxlem4.8} and \ref{xxlem4.9}, all
$v\in\Int(\tau)\setminus\Piv(\tau)$ are contained in
$\Int(\tau_1)\cup\Int(\tau_3)$ and thus $\tau_2$ is
single-branched. See Figure \ref{xxfig7-xxlem4.10} for an example
of such decomposition, where only a subset of the internal
vertices of $\tau$ are drawn and all leaves are omitted.
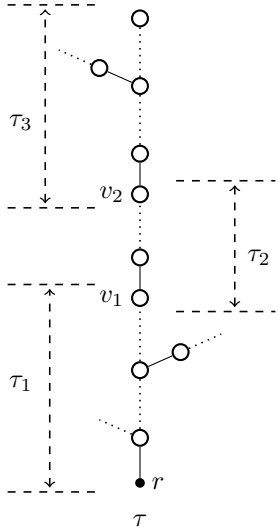
\begin{figure}[h]
\centering
\begin{tikzpicture}[scale=0.6]
  \tikzstyle{every node}=[draw,thick,circle,fill=white,minimum size=6pt, inner sep=1pt]
    \node (r) at (0,0) [fill=black,minimum size=0pt,label=right:$r$] {};
    \node (1) at (0,1)  {};
    \node (2) at (0,2.5)  {};
    \node (3a) at (0,4.1)  [label=left:$v_1$] {};
    \node (3b) at (0.9,2.9)   {};
    \node (4) at (0,5)  {};
    \node (5) at (0,6.4)  [label=left:$v_2$] {};
    \node (6) at (0,7.3) {};
    \node (7) at (0,8.8) {};
    \node (8a) at (0,10.3)  {};
    \node (8b) at (-0.9,9.2)  {};

    \draw  (r) -- (1);
    \draw [dotted,line width=0.6pt] (1)--(-0.9,1.4);
    \draw [dotted,line width=0.6pt] (1)--(2);
    \draw [dotted,line width=0.6pt](2)--(3a);
    \draw (3a)--(4);
    \draw [dotted,line width=0.6pt] (3b)--(1.8,3.3);
    \draw              (2)--(3b);
    \draw [dotted,line width=0.6pt](4)--(5);
    \draw (5)--(6);
    \draw [dotted,line width=0.6pt](6)--(7)--(8a);
    \draw (7)--(8b);
    \draw [dotted,line width=0.6pt](8b)--(-1.8,9.6);

    \draw [dashed,line width=0.6pt] (-1,4.4)--(-3,4.4);
    \draw [dashed,line width=0.6pt] (-1,-0.2)--(-3,-0.2);
    \draw [<->,dashed,line width=0.6pt] (-2,-0.1)--(-2,4.3);
    \node (tau1) at (-2.2,2.2) [minimum size=0pt,inner sep=0pt,label=left:$\tau_1$] {};

    \draw [dashed,line width=0.6pt] (0.8,3.8)--(3,3.8);
    \draw [dashed,line width=0.6pt] (0.8,6.7)--(3,6.7);
    \draw [<->,dashed,line width=0.6pt] (2.1,3.9)--(2.1,6.6);
    \node (tau1) at (2.2,5) [minimum size=0pt,inner sep=0pt,label=right:$\tau_2$] {};

    \draw [dashed,line width=0.6pt] (-1,6.1)--(-3,6.1);
    \draw [dashed,line width=0.6pt] (-1,10.6)--(-3,10.6);
    \draw [<->,dashed,line width=0.6pt] (-2.1,6.2)--(-2.1,10.5);
    \node (tau1) at (-2.2,8) [minimum size=0pt,inner sep=0pt,label=left:$\tau_3$] {};

\node[minimum size=0pt,inner sep=0pt,label=below:$\tau$] (name) at (0,-0.5){};
%Name of the tree
\end{tikzpicture}
\caption{Decomposition of $\tau$, where
$\h(v_1)=M_1+M_2$, $\h(v_2)=\h(\tau)-M_2$,
$v_1\in\Int(\tau_1)\cap\Int(\tau_2)$ and $v_2\in\Int(\tau_2)\cap\Int(\tau_3).$}
\label{xxfig7-xxlem4.10}
\end{figure}

Set $a:=\max\{\Ar(x):x\in \XX\}$ and $c:=|\XX|$. Then
\begin{align}
\label{E4.10.1}\tag{E4.10.1}
M_1+M_2=\h(\tau_1)\leq \wt(\tau_1)<M_3:=1+a+a^2+\cdots+a^{M_1+M_2}
\end{align}
and
\begin{align}
\label{E4.10.2}\tag{E4.10.2}
M_2+1=\h(\tau_3)\leq \wt(\tau_3)<M_3.
\end{align}
There are only finitely many, say $M_4$, PRTs of weight $<M_3$ and
thus there are at most $c^{M_3}\cdot M_4$ tree monomials of weight
$<M_3$. It follows from $\wt(\tau_1)+\wt(\tau_2)+\wt(\tau_3)=n+2$
and inequalities \eqref{E4.10.1} and \eqref{E4.10.2} that
\[
\h(\tau_2)=\wt(\tau_2)> n-2M_3+2.
\]
Thus, for all $n\geq d+2M_3-2$, we have $\h(\tau_2)> n-2M_3+2\geq d$.
By Lemma \ref{xxlem4.5}, there are at most $(d-1)^3$ possibilities
for $T_2$ for all $n\geq d+2M_3-2$. Since $T$ is uniquely determined
by its submonomials $T_1$, $T_2$ and $T_3$, there are at most
$(c^{M_3}\cdot M_4)^2(d-1)^3$ possibilities for $T$.
Therefore, there exist positive integers
\[
d_1:=(c^{M_3}\cdot M_4)^2(d-1)^3 \text{ and }
d_2:=d+2M_3-2
\]
such that $\Irr(I)$ contains at most $d_1$ tree monomials of weight
$h$ for all $h\geq d_2$.
\end{proof}

Now we are ready to prove our main result.

\begin{theorem}
\label{xxthm4.11}
Let $\XX$ be a finite operation alphabet and $\PP=\TX/\II$.
Write $\VV=\FF \XX$. Assume $\dim
(\sum_{i=0}^d \VV^i)/(\sum_{i=0}^{d-1} \VV^i)\leq d-3$ for
some $d\geq 3$ {\rm{(}}or $\GK(\PP)<2${\rm{)}}. Then there
exist positive real numbers $a,b$ such that
$$\dim \sum_{i=0}^n \PP(i)\leq an+b$$
for all $n\geq 0$. As a consequence $\GK(\PP)\leq 1$.
\end{theorem}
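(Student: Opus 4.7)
The plan is to combine Lemma \ref{xxlem4.10} with Proposition \ref{xxpro2.4} via a routine conversion between the weight and the arity of tree monomials. Essentially all the substantive work has already been done in Lemmas \ref{xxlem4.8}--\ref{xxlem4.10}; what remains is a bookkeeping argument.

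First I would invoke Proposition \ref{xxpro2.4} to identify $\Irr(\II)$ as an $\FF$-basis of $\PP=\TX/\II$, so that
\[
\dim \sum_{i=0}^n \PP(i)
 = \bigl|\{T \in \Irr(\II) : \Ar(T) \leq n\}\bigr|.
\]
Then I would split the count on the right by weight. Lemma \ref{xxlem4.10} supplies positive integers $d_1,d_2$ with $|\{T\in\Irr(\II):\wt(T)=h\}|\le d_1$ for every $h\ge d_2$, and since $\XX$ is finite there are only finitely many tree monomials of weight strictly less than $d_2$; call this number $C$. Summing,
\[
\bigl|\{T \in \Irr(\II) : \wt(T) \leq W\}\bigr|
 \leq C + d_1\,(W - d_2 + 1)
\]
for every $W\ge d_2$, which is linear in $W$.

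Next I would translate this weight bound into an arity bound. The elementary identity $\Ar(\tau)=1+\sum_{v\in\Int(\tau)}(\Ar(v)-1)$ for a PRT $\tau$ gives $\wt(\tau)\le\Ar(\tau)-1$ as long as every internal vertex has arity at least two. To arrange this, I would first reduce to $\PP_{rc}$ via Lemma \ref{xxlem3.2} (which preserves $\GK$ and makes $\id$ the only arity-one operation), so the generating alphabet may be taken with arities $\ge 2$. Combining with the previous step,
\[
\dim \sum_{i=0}^n \PP(i)
 \leq C + d_1\,n = a n + b
\]
for $a:=d_1$ and a suitable $b$, which is the desired linear bound. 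The ``as a consequence'' clause then follows at once from $\limsup_{n\to\infty}\log_n(an+b)=1$.

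The genuine obstacle has already been overcome in Lemmas \ref{xxlem4.8}--\ref{xxlem4.10}: once one controls the number of irreducible tree monomials of large weight, the theorem is essentially a summation. The only real subtlety here is the possible presence of unary generators, which is sidestepped cleanly by passing to $\PP_{rc}$.
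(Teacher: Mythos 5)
Your proposal shares with the paper the decisive ingredient, Lemma~\ref{xxlem4.10}, but converts the weight count into an arity count by a different mechanism. The paper introduces the weight-filtration dimension $d_\VV(n):=\dim\sum_{i\le n}\VV^i$ and uses the double inclusion~\eqref{E3.3.1} to pass back to $\dim\sum_{i\le n}\PP(i)$; the factor $nt-(n-1)$ there absorbs any unary generators automatically, and the inequality $\wt\le\Ar-1$ is never needed. You instead count directly in the arity-graded basis $\Irr(\II)$ (legitimate by Proposition~\ref{xxpro2.4}, since $\II$ is arity-homogeneous) and then use $\wt(\tau)\le\Ar(\tau)-1$, which is cleaner on the grading side but genuinely requires every generator to have arity $\ge 2$.

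The $\PP_{rc}$ step is where care is needed. Passing to $\PP_{rc}$ changes the presentation to some $\TX'/\II'$, so Lemma~\ref{xxlem4.10} must be re-applied to this new pair. That re-application is unproblematic under the alternative hypothesis $\GK(\PP)<2$: Lemma~\ref{xxlem3.2}(3) gives $\GK(\PP_{rc})<2$, and Lemma~\ref{xxlem4.7}(2) then supplies the quantitative inequality for the new $\VV'$. But if one is given only the presentation-specific hypothesis $\dim(\sum_{i=0}^d\VV^i)/(\sum_{i=0}^{d-1}\VV^i)\le d-3$ for the \emph{original} $\VV$, it does not obviously carry over to $(\TX',\II')$ without first establishing $\GK(\PP)<2$, which is what you are trying to prove. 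This can be avoided by skipping $\PP_{rc}$ altogether: since $\PP(1)$ is finite dimensional, there are only finitely many arity-one elements of $\Irr(\II)$, of bounded weight $w_1$, and every maximal unary chain in an irreducible $T$ is itself an irreducible arity-one submonomial, hence of weight $\le w_1$; counting such chains yields $\wt(T)\le C\cdot\Ar(T)$ for a constant $C$ depending only on $w_1$, which gives the needed weight-to-arity conversion for the \emph{original} presentation and works under either form of the hypothesis.
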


\begin{proof}
Define
\[
d_{\VV}(n):=\dim\sum_{i\leq n}\VV^i
\]
for all $n\in \NN$. By Lemma \ref{xxlem4.10}, there exist positive
integers $d_1$ and $d_2$ such that $\Irr(I)$ contains at most $d_1$
tree monomials of weight $h$ for all $h\geq d_2$.
Thus for all $n\geq d_2$, setting $n:=d_2+q$, we have that
\[
d_{\VV}(n)\leq d_{\VV}(d_2)+qd_1=d_{\VV}(d_2)+(n-d_2)d_1,
\]
and the function on the right hand side is linear in $n$.
Now the assertion follows from \eqref{E3.3.1}.
The consequence is clear.
\end{proof}

Theorem \ref{xxthm0.1} follows from Theorem \ref{xxthm4.11}
easily.

%%%%%%%%%%

\section{Single-generated ns operads}
\label{xxsec5}

We say an operad $\PP$ is {\it single-generated} (or
{\it single-element generated}) if it is
generated by an operation alphabet $\XX$ that consists
of a single element. In this section, we will describe an
approach that can be used to construct a single-generated
single-branched operad from a finitely generated monomial
algebra. This procedure is called the {\it operadization} of
a finitely generated monomial algebra, which is different
from that given in Construction \ref{xxcon1.3}.

Fix an integer $d\geq 2$. Denote $X_0=\{x_1,x_2,\cdots,x_d\}$
and $X_0^*$ the free monoid generated by $X_0$. (Note that
$X_0$ is not the generating alphabet $\XX$ of an operad). Let
$\FF\langle X_0 \rangle: =\FF\langle x_1,x_2,\cdots,x_d\rangle
={\mathop{\bigoplus}\limits_{l\geq 0}} \FF\langle X_0 \rangle_{l}$
be the free graded algebra generated by $X_0$ with
$\deg(x_i)=1~(1\leq i\leq d)$.

We always assume that the tree monomials $T_d$ and
$R^\vee_{i,j}~(1\leq i<j\leq d)$ are defined as follows, see
Figure \ref{xxfig8-xxsec5}:
\begin{figure}[h]
\centering
\begin{tikzpicture}[scale=0.6]
  \tikzstyle{every node}=[draw,thick,circle,fill=white,minimum size=6pt, inner sep=1pt]
    \node (r) at (0,0) [fill=black,minimum size=0pt,label=right:$r$] {};
    \node (1) at (0,1.5)  {$a$};
  %  \node (1a) at (-0.5,1.5);
    \node (2a) at (-1.5,3)  [fill=black,minimum size=0pt,label=above:$1$]{};
    \node (2b) at (0,3)  [fill=black,minimum size=0pt,label=above:$i$]{};
    \node (2c) at (1.5,3)  [fill=black,minimum size=0pt,label=above:$d$]{};
    %\node (2c) at (1,2)  [fill=black,minimum size=0pt,label=above:$2$]{};
    \draw [dotted,line width=1pt] (-0.8,2.6)--(-0.4,2.6);
    \draw [dotted,line width=1pt] (0.4,2.6)--(0.8,2.6);
    \draw  (r) --(1)--(2a);
    \draw        (1)--(2b);
    \draw        (1)--(2c);
\node[minimum size=0pt,inner sep=0pt,label=below:$T_d$] (name) at (0,-0.5){};
\end{tikzpicture}
\hspace{8mm}
\begin{tikzpicture}[scale=0.6]
  \tikzstyle{every node}=[draw,thick,circle,fill=white,minimum size=6pt, inner sep=1pt]
    \node (r) at (0,0) [fill=black,minimum size=0pt,label=right:$r$] {};
    \node (1) at (0,1.5)  {$a$};
    %\node (2a) at (-2.25,3) {$a$};
    \node (2a) at (-1,3) {$a$};
    \node (2b) at (1,3) {$a$};
    %\node (2d) at (2.25,3) {$a$};
    \node (i) at (-0.9,2.4) [minimum size=0pt,inner sep=0pt,label=right:$i$] {};
     \node (j) at (0.15,2.4) [minimum size=0pt,inner sep=0pt,label=right:$j$] {};
 %   \node (r) at (0,0) [fill=black,minimum size=0pt,label=right:$r$] {};
%    \node (1) at (0,1.5)  {$a$};
  %  \node (1a) at (-0.5,1.5);

    \node (2x) at (-3,3)  [fill=black,minimum size=0pt,label=above:]{};
    \node (2z) at (-1.7,3)  [fill=black,minimum size=0pt,label=above:]{};
    \node (2u) at (-0.5,3)  [fill=black,minimum size=0pt,label=above:]{};
    \node (2v) at (0.5,3)  [fill=black,minimum size=0pt,label=above:]{};
    \node (2w) at (1.7,3)  [fill=black,minimum size=0pt,label=above:]{};
    \node (2y) at (3,3)  [fill=black,minimum size=0pt,label=above:]{};

    \node (3a) at (-1.8,4.5)  [fill=black,minimum size=0pt,label=above:]{};
    \node (3b) at (-1,4.5)  [fill=black,minimum size=0pt,label=above:]{};
    \node (3c) at (-0.2,4.5)  [fill=black,minimum size=0pt,label=above:]{};
    \node (3d) at (0.2,4.5)  [fill=black,minimum size=0pt,label=above:]{};
    \node (3e) at (1,4.5)  [fill=black,minimum size=0pt,label=above:]{};
    \node (3f) at (1.8,4.5)  [fill=black,minimum size=0pt,label=above:]{};

    \draw [dotted,line width=1pt] (-2,2.7)--(-1.6,2.7);
    \draw [dotted,line width=1pt] (-0.2,2.7)--(0.2,2.7);
    \draw [dotted,line width=1pt] (1.6,2.7)--(2,2.7);

    \draw [dotted,line width=1pt] (-1.5,4.2)--(-1.1,4.2);
    \draw [dotted,line width=1pt] (-0.9,4.2)--(-0.5,4.2);

    \draw [dotted,line width=1pt] (1.1,4.2)--(1.5,4.2);
    \draw [dotted,line width=1pt] (0.5,4.2)--(0.9,4.2);

    \draw  (r) -- (1)--(2x);
    \draw         (1)--(2a);
    \draw         (1)--(2b);
    \draw         (1)--(2y);
    \draw         (1)--(2z);
    \draw         (1)--(2w);
    \draw         (1)--(2u);
    \draw         (1)--(2v);
    \draw              (2a)--(3a);
    \draw              (2a)--(3b);
    \draw              (2a)--(3c);
     \draw             (2b)--(3d);
    \draw              (2b)--(3e);
    \draw              (2b)--(3f);
\node[minimum size=0pt,inner sep=0pt,label=below:$R^\vee_{i,j}$] (name) at (0,-0.5){};
%Name of the tree
\end{tikzpicture}
\caption{}
\label{xxfig8-xxsec5}
\end{figure}\\
Suppose that $\widetilde{\QQ}$ is the operad
presented by generator $\XX=\{a\}$ and relations
$\{(a\circ_{j}a)\circ_{i}a,~1\leq i<j\leq d\}$, then
$\widetilde{\QQ}$ is a single-branched operad. We
also say $\widetilde{\QQ}$ is the operad
generated by $T_d$ and subject to the relations
$R^\vee_{i,j}~(1\leq i<j\leq d)$.

Identifying the tree monomial $T_d$ and its
labelling $a$, we define the following
$\FF$-linear \emph{operadization map}, which is
an isomorphism of vector spaces,
\begin{eqnarray*}\label{}
\mathcal{O}: \FF\langle X_0 \rangle \longrightarrow \widetilde{\QQ}_{\geq 2}
\end{eqnarray*}
by setting
\begin{eqnarray*}\label{}
\mathcal{O}(w)=\mathcal{O}(x_{i_1} x_{i_2} \cdots x_{i_k})
=a \circ_{i_1} a \circ_{i_2} \cdots \circ_{i_k} a
:=(a \circ_{i_1} (a \circ_{i_2}( \cdots \circ_{i_k} a)))
\end{eqnarray*}
for any $w=x_{i_1} x_{i_2} \cdots x_{i_k} \in X_0^*$.
See \eqref{E4.0.1} and \eqref{E4.0.2}.

Now we are ready to give the following construction,
in the proof of which we describe an operadization
of a finitely generated monomial algebra.

\begin{construction}
\label{xxcon5.1}
For any finitely generated monomial algebra
$A=\FF\langle X_0 \rangle/I$ with
$I\subseteq {\mathop{\bigoplus}\limits_{l\geq 2}}
\FF\langle X_0 \rangle_{l}$ being a monomial ideal of
the free algebra, there exists a single-generated
single-branched ns operad $\QQ_A$ such that
\[\GK (\QQ_A)=\GK (A).\]
\end{construction}

\begin{proof}
Since $I$ is a monomial ideal of $\FF\langle X \rangle$,
it is routine to check that $\mathcal{O}(I)$ is an ideal
of $\widetilde{\QQ}$.

Define the operad $\QQ_A$ to be the quotient operad
$\widetilde{\QQ}/\mathcal{O}(I)$. Noting that
\begin{eqnarray*}\label{}
\dim \QQ_A(n)=
\left\{
 \begin{array}{ll}
1,~\quad~&{\rm when}~n=1~{\rm or}~d,\\
\dim A_l,~\quad~&{\rm when}~n=(l+1)d-l~{\rm and}~l\geq 1,\\
0,~\quad~&{\rm otherwise},
\end{array}
\right.
\end{eqnarray*}
we obtain that
\[\GK (\QQ_A)=\GK (A).\]
\end{proof}

\begin{definition}
\label{xxdef5.2}
Suppose $d:=|X_0|\geq 2$. The single-generated single-branched
ns operad $\QQ_A=\widetilde{\QQ}/\mathcal{O}(I)$ defined in
Construction $\ref{xxcon5.1}$ is called the {\it operadization}
of the finitely generated monomial algebra
$A=\FF\langle X_0 \rangle/I$, where
$I\subseteq {\mathop{\bigoplus}\limits_{l\geq 2}} \FF\langle X_0 \rangle_{l}$
is a monomial ideal of the free algebra.
\end{definition}

Note that Construction \ref{xxcon5.1} is related to some ideas
presented in \cite{DMR20, DT20}. For example, a special case of
this construction is given in \cite[Section 7.1.4]{DT20}.

Next we will present some examples of single-generated
single-branched operads by using the operadization
procedure described above.

\begin{example}
\label{xxex5.3}
Let $d=2$. Suppose
$T_2,R^\vee_{1,2},R_{1,1},R_{1,2},R_{2,1},R_{2,2}$
are defined as in Figure \ref{xxfig9-xxex5.3}.
\begin{figure}[h]
\centering
\begin{tikzpicture}[scale=0.6]
  \tikzstyle{every node}=[draw,thick,circle,fill=white,minimum size=6pt, inner sep=1pt]
    \node (r) at (0,0) [fill=black,minimum size=0pt,label=right:$r$] {};
    \node (1) at (0,1)  {$a$};
    %\node (2) at (0,2) {$b$};
    \node (2a) at (-1,2)  [fill=black,minimum size=0pt,label=above:$1$]{};
    \node (2b) at (1,2)  [fill=black,minimum size=0pt,label=above:$2$]{};
    \draw  (r) --(1)--(2a);
    \draw        (1)--(2b);
\node[minimum size=0pt,inner sep=0pt,label=below:$T_2$] (name) at (0,-0.5){};
\end{tikzpicture}
\hspace{8mm}
\begin{tikzpicture}[scale=0.6]
  \tikzstyle{every node}=[draw,thick,circle,fill=white,minimum size=6pt, inner sep=1pt]
    \node (r) at (0,0) [fill=black,minimum size=0pt,label=right:$r$] {};
    \node (1) at (0,1)  {$a$};
    \node (2a) at (-1,2) {$a$};
    \node (2b) at (1,2) {$a$};
    \node (3a) at (-1.5,3)  [fill=black,minimum size=0pt,label=above:$1$]{};
    \node (3b) at (-0.5,3)  [fill=black,minimum size=0pt,label=above:$2$]{};
    \node (3c) at (0.5,3)  [fill=black,minimum size=0pt,label=above:$3$]{};
    \node (3d) at (1.5,3)  [fill=black,minimum size=0pt,label=above:$4$]{};
    \draw  (r) -- (1)--(2a)--(3a);
    \draw         (1)--(2b)--(3c);
    \draw              (2a)--(3b);
    \draw              (2b)--(3d);
\node[minimum size=0pt,inner sep=0pt,label=below:$R^\vee_{1,2}$] (name) at (0,-0.5){};
%Name of the tree
\end{tikzpicture}
\hspace{8mm}
%\caption{1st relation of the operad $\QQ$}\label{fig_GK-dim-1}
%\end{figure}\\
%and any two among the following ones
%\begin{figure}[h]
%\centering
\begin{tikzpicture}[scale=0.6]
  \tikzstyle{every node}=[draw,thick,circle,fill=white,minimum size=6pt, inner sep=1pt]
    \node (r) at (0,0) [fill=black,minimum size=0pt,label=right:$r$] {};
    \node (1) at (0,1)  {$a$};
    \node (2a) at (-1,2) {$a$};
    \node (2b) at (1,2) [fill=black,minimum size=0pt,label=above:$4$]{};
    \node (3a) at (-2,3) {$a$};
    %[fill=black,minimum size=0pt,label=above:$1$]{};
    \node (3b) at (-0,3)  [fill=black,minimum size=0pt,label=above:$3$]{};
    \node (4a) at (-3,4)  [fill=black,minimum size=0pt,label=above:$1$]{};
    \node (4b) at (-1,4)  [fill=black,minimum size=0pt,label=above:$2$]{};
    \draw  (r) -- (1)--(2a)--(3a)--(4a);
    \draw         (1)--(2b);
    \draw              (2a)--(3b);
    \draw              (3a)--(4b);
\node[minimum size=0pt,inner sep=0pt,label=below:$R_{1,1}$] (name) at (0,-0.5){};
%Name of the tree
\end{tikzpicture}
\\
\vspace{0.6cm}
\begin{tikzpicture}[scale=0.6]
  \tikzstyle{every node}=[draw,thick,circle,fill=white,minimum size=6pt, inner sep=1pt]
    \node (r) at (0,0) [fill=black,minimum size=0pt,label=right:$r$] {};
    \node (1) at (0,1)  {$a$};
    \node (2a) at (-1,2) {$a$};
    \node (2b) at (1,2) [fill=black,minimum size=0pt,label=above:$4$]{};
    \node (3a) at (-2,3)  [fill=black,minimum size=0pt,label=above:$1$]{};
    \node (3b) at (-0,3)  {$a$};
    \node (4b) at (-1,4)  [fill=black,minimum size=0pt,label=above:$2$]{};
    \node (4c) at (1,4)  [fill=black,minimum size=0pt,label=above:$3$]{};
    \draw  (r) -- (1)--(2a)--(3a);
    \draw         (1)--(2b);
    \draw              (2a)--(3b)--(4c);
    \draw         (3b)--(4b);
\node[minimum size=0pt,inner sep=0pt,label=below:$R_{1,2}$] (name) at (0,-0.5){};
%Name of the tree
\end{tikzpicture}
\hspace{8mm}
\begin{tikzpicture}[scale=0.6]
  \tikzstyle{every node}=[draw,thick,circle,fill=white,minimum size=6pt, inner sep=1pt]
    \node (r) at (0,0) [fill=black,minimum size=0pt,label=right:$r$] {};
    \node (1) at (0,1)  {$a$};
    \node (2a) at (-1,2) [fill=black,minimum size=0pt,label=above:$1$] {};
    \node (2b) at (1,2) {$a$};
    \node (3a) at (0,3) {$a$};
    \node (3b) at (2,3)  [fill=black,minimum size=0pt,label=above:$4$] {};
    \node (4b) at (-1,4)  [fill=black,minimum size=0pt,label=above:$2$] {};
    \node (4c) at (1,4)  [fill=black,minimum size=0pt,label=above:$3$] {};
    \draw  (r) -- (1)--(2a);
    \draw         (1)--(2b)--(3a)--(4c);
    \draw              (2b)--(3b);
    \draw              (3a)--(4b);
\node[minimum size=0pt,inner sep=0pt,label=below:$R_{2,1}$] (name) at (0,-0.5){};
%Name of the tree
\end{tikzpicture}
\hspace{8mm}
\begin{tikzpicture}[scale=0.6]
  \tikzstyle{every node}=[draw,thick,circle,fill=white,minimum size=6pt, inner sep=1pt]
    \node (r) at (0,0) [fill=black,minimum size=0pt,label=right:$r$] {};
    \node (1) at (0,1)  {$a$};
    \node (2a) at (-1,2) [fill=black,minimum size=0pt,label=above:$1$] {};
    \node (2b) at (1,2) {$a$};
    \node (3a) at (0,3)  [fill=black,minimum size=0pt,label=above:$2$] {};
    \node (3b) at (2,3)  {$a$};
    \node (4a) at (1,4)  [fill=black,minimum size=0pt,label=above:$3$] {};
    \node (4b) at (3,4)  [fill=black,minimum size=0pt,label=above:$4$] {};
    \draw  (r) -- (1)--(2a);
    \draw         (1)--(2b)--(3a);
    \draw              (2b)--(3b)--(4b);
    \draw              (3b)--(4a);
\node[minimum size=0pt,inner sep=0pt,label=below:$R_{2,2}$] (name) at (0,-0.5){};
%Name of the tree
\end{tikzpicture}
\caption{}
\label{xxfig9-xxex5.3}
\end{figure}\\

\begin{enumerate}
\item[(1)]
If $\QQ$ is the operad generated by $T_2$ and subject
to the relation $R^\vee_{1,2}$, then we can check that
${\rm dim}\QQ(n)=2^{n-2}$ for $n\geq 2$. Hence in this
case $G_{\QQ}(z)=z+\frac{z^2}{1-2z}$ and $\GK(\QQ)=\infty$.
\item[(2)]
Define $\QQ$ to be the operad generated by $T_2$ and
subject to the relations $R^\vee_{1,2}, R_{1,1}$.
Then we can check that $\QQ$ is an operadization of the
graded algebra $A:=\FF\langle x_1,x_2 \rangle/\langle x^2_1 \rangle$.
and
\begin{eqnarray*}
{\rm dim}\QQ(n)={\rm dim}\QQ(n-1) + {\rm dim}\QQ(n-2)~ {\rm for}~ n\geq 2.
\end{eqnarray*}
Hence in this case $G_{\QQ}(z)=\frac{z}{1-z+z^2}$,
$\GK(\QQ)=\infty$, and we call $\QQ$ the \emph{Fibonacci operad}.
\item[(3)]
Define $\QQ$ to be the operad generated by $T_2$ and subject
to the relations $R^\vee_{1,2}, R_{2,1},R_{2,2}$. Then
$\QQ$ is an operadization of the graded algebra
$\FF\langle x_1,x_2 \rangle/\langle x_2 x_1, x^2_2 \rangle$.
and
\begin{eqnarray*}
{\rm dim}\QQ(n)=
\left\{
 \begin{array}{ll}
1,~\quad~&{\rm when}~n=1~{\rm or}~2,\\
2,~\quad~&{\rm when}~n\geq 3,\\
0,~\quad~&{\rm when}~n=0.
\end{array}
\right.
\end{eqnarray*}
Hence $G_{\QQ}(z)=z+z^2+\frac{2z^3}{1-z}$ and $\GK(\QQ)=1$.
\end{enumerate}
\end{example}

If we consider a graded algebra $A=\FF\langle x_1,x_2 \rangle
/\langle x_1x_2-x^2_1 \rangle$ which is not a monomial algebra.
It is easy to check that ${\mathcal O}(I)$ is not an ideal of
$\widetilde{\QQ}$. So it requires more work to understand the
quotient operad $\widetilde{\QQ}/\langle {\mathcal O}(I) \rangle$
in this case.

Now we are ready to prove Theorem \ref{xxthm0.2}.

\begin{proof}[Proof of Theorem \ref{xxthm0.2}]
(1) Let $\PP$ be a finitely generated locally finite
ns operad. It is clear that $\GK(\PP)\geq 0$.
By Proposition \ref{xxpro3.8}, $\GK(\PP)$ is not
strictly between $0$ and $1$. By Theorem \ref{xxthm0.1},
$\GK(\PP)$ is not between $1$ and $2$. Therefore
$\GK(\PP)\in R_{\GK}$, see \eqref{E0.0.1}
for the definition of $R_{\GK}$.

(2) Let $r\in R_{\GK}$. Let $A$ be a finitely generated algebra
of GK-dimension $r$. By \cite[Remark 4.1]{Bel15}, we
can assume $A$ is a monomial algebra finitely generated in
degree 1. By Construction \ref{xxcon5.1}, $\QQ_A$ is a single-generated
single-branched locally finite operad with $\GK(\QQ_A)=r$.
The assertion follows.
\end{proof}

\section{Generating series and exponential generating series}
\label{xxsec6}

The generating series of an operad is defined in \eqref{E0.2.1}.
By \cite[(0.1.1)]{KP15}, the {\it exponential generating series}
of an operad $\PP$ is defined to be
\begin{equation}
\label{E6.0.1}\tag{E6.0.1}
E_{\PP}(z):=\sum_{n\geq 0}\frac{\dim \PP(n)}{n!} z^n.
\end{equation}

Clearly, $G_{\PP}(z)$ (resp. $E_{\PP}(z)$) contains more
information than $\GK(\PP)$. A list of (exponential) generating
series of operads are given in \cite{Zin12, KP15}. Several authors have
recently been studying the holonomic and differential algebraic
properties of $G_{\PP}(z)$ (resp. $E_{\PP}(z)$), see Definition
\ref{xxdef0.3}.

By \cite[Theorem 1.5]{Sta80},
$F(z):=\sum_{n=0}^{\infty} f(n) z^n$ is holonomic if
and only if the sequence $\{f(n)\}_{n\geq 0}$ satisfies a
recurrence relation of the form
\begin{equation}
\label{E6.0.2}\tag{E6.0.2}
f(n)=q_1(n) f(n-1) +\cdots + q_{n-m}(n) f(n-m), \quad n\gg 0
\end{equation}
for some fixed $m$ and rational functions $q_1(x),\cdots,q_m(x)$.
For all examples given in Example \ref{xxex5.3}, $G_{\PP}(z)$
are rational, and consequently, holonomic. Using \eqref{E6.0.2},
it is easy to see that
\begin{equation}
\label{E6.0.3}\tag{E6.0.3}
{\text{$G_{\PP}(z)$ is holonomic if and only if so is $E_{\PP}(z)$.}}
\end{equation}

For two integers $a$ and $b$, let $[a,b]_{\NN}:=[a,b]\cap \NN$.
Suppose the sequence $\{f(n)\}_{n\geq 0}$ has infinitely many
nonzeros. Let
$$\Phi_{F}=\{n \mid f(n)= 0\}$$
and write $\Phi$ as $[i_1, j_1]_{\NN}\cup [i_2, j_2]_{\NN}\cup \cdots$
where $i_s\leq j_s \leq i_{s+1}-2$ for all $s\geq 1$. Then
the following is true.

\begin{lemma}
\label{xxlem6.1}
Let $F(z)=\sum_{n=0}^{\infty} f(n) z^n$ with infinitely many
nonzero coefficients. Write $\Phi_{F}=\bigcup_{s\geq 1}
[i_s,  j_s]_{\NN}$. Suppose that $\limsup_{s\to\infty} (j_s-i_s)=\infty$.
Then $F(z)$ is not holonomic.
\end{lemma}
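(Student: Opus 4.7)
The plan is to argue by contradiction using the recurrence characterization \eqref{E6.0.2} of holonomicity. Assume $F(z)$ is holonomic. Then there is a positive integer $m$ and rational functions $q_1(x),\dots,q_m(x)$ such that
\[
f(n)=q_1(n)f(n-1)+\cdots+q_m(n)f(n-m)
\]
holds for all $n\geq N_0$, where $N_0$ is chosen large enough that (i) the recurrence is valid from that point on, and (ii) $N_0$ exceeds every pole of $q_1,\dots,q_m$ (there are only finitely many such poles, so such an $N_0$ exists).

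Next I would use the hypothesis $\limsup_{s\to\infty}(j_s-i_s)=\infty$ to extract a run of zero coefficients that is both sufficiently long and sufficiently far out. Concretely, since there are infinitely many indices $s$ with $j_s-i_s\geq m-1$, one can pick such an $s$ satisfying in addition $i_s\geq N_0+m$. Then the coefficients
\[
f(i_s),\; f(i_s+1),\;\dots,\; f(j_s)
\]
are all zero, and this block contains at least $m$ consecutive zeros ending at $f(j_s)$. Applying the recurrence at $n=j_s+1$, which is legitimate because $j_s+1>N_0$ is not a pole of any $q_k$, gives
\[
f(j_s+1)=\sum_{k=1}^{m} q_k(j_s+1)\,f(j_s+1-k)=0,
\]
since each argument $j_s+1-k$ lies in $[i_s,j_s]$ by the choice $j_s-i_s\geq m-1$.

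This contradicts the maximality of the interval $[i_s,j_s]_{\NN}$: by the decomposition $\Phi_F=\bigcup_s[i_s,j_s]_{\NN}$ with $j_s\leq i_{s+1}-2$, the index $j_s+1$ lies outside $\Phi_F$, hence $f(j_s+1)\neq 0$. Therefore $F(z)$ cannot be holonomic.

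I do not expect any real obstacle here; the only points requiring care are arithmetic: choosing $N_0$ large enough to absorb both the validity threshold of the recurrence and all poles of the $q_k$'s, and making sure the chosen block of zeros has length at least $m$ so that the right-hand side of the recurrence at $n=j_s+1$ is built entirely out of zero terms. Both are handled by the $\limsup=\infty$ assumption, which guarantees arbitrarily long runs of zeros at arbitrarily large positions.
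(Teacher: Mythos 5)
Your proposal is correct and follows essentially the same route as the paper: apply Stanley's recurrence characterization \eqref{E6.0.2}, locate a sufficiently long run of zeros far enough out using $\limsup(j_s-i_s)=\infty$, and observe that the recurrence then forces the next coefficient to vanish. The only (minor) difference is the final contradiction: you stop after one application of the recurrence and contradict the maximality of $[i_s,j_s]$ in $\Phi_F$ (so $f(j_s+1)\neq 0$), whereas the paper iterates the recurrence by induction to conclude $f(n)=0$ for all $n\geq i_s+m$ and contradicts the assumption that $F$ has infinitely many nonzero coefficients; your version is in fact slightly cleaner, and your explicit handling of the threshold $N_0$ and the poles of the $q_k$'s makes precise a point the paper leaves implicit.
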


\begin{proof} Suppose to the contrary that $F(z)$ is holonomic.
Then \eqref{E6.0.2} holds for some $m$. Pick any $s$ such that
$j_s-i_s\geq m+1$. By the definition of $\Phi_{F}$, $f(n)=0$ for
all $i_s\leq n\leq j_s$. By \eqref{E6.0.2} and induction, we have
that $f(n)=0$ for all $n\geq i_s+m$. Therefore $F(z)$ has
only finitely many nonzero coefficients, a contradiction.
\end{proof}

The next example shows that we can easily construct a monomial
algebra such that its Hilbert series is not holonomic.

\begin{example}
\label{xxex6.2}
Let $U$ be the monomial algebra $\FF\langle x_1,x_2\rangle/I$
where $\deg x_1=\deg x_2=1$ and $I$ is a span of monomials of
the form
\begin{enumerate}
\item[(a)]
all monomials having degree $\geq 3$ in $x_2$,
\item[(b)]
$x_1^i x_2 x_1^j $ with $i>0$ or $j>0$,
\item[(c)]
$x_2 x_1^i x_2$ where $(2m+1)^{2m+1}-1 \leq i\leq (2m+2)^{2m+2}-1$
for some integer $m\geq 0$.
\end{enumerate}
Let $\Lambda$ be $\bigcup_{m=0}^{\infty}
[(2m+1)^{2m+1}+1, (2m+2)^{2m+2}+1]_{\NN}$
which is a subset of $\NN$. Let $\Lambda^c=\NN\setminus \Lambda$.
Then $\Lambda^c=\{0\}\cup \{1\} \cup \bigcup_{m=0}^{\infty}
[(2m+2)^{2m+2}+2, (2m+3)^{2m+3}]_{\NN}$.
Define
$$\delta_{\Lambda}(n)=\begin{cases}
0 & n\in \Lambda,\\
1 & n\in \Lambda^c.
\end{cases}$$
Then $U$ has an $\FF$-basis $\{x_1^i\}_{i\geq 0}
\cup \{x_1^i x_2\}_{i\geq 0}\cup\{x_2 x_1^i\}_{i\geq 0}
\cup \{x_2 x_1^i x_2\}_{i+2\in \Lambda^c}$ and
its Hilbert series is
$$H_U(z)=1+ 2z+ \sum_{n=2}^{\infty} (3+\delta_{\Lambda}(n)) z^n
=1+2 z +\frac{3z^2}{(1-z)}+V(z)$$
where $V(z)=\sum_{n=2}^{\infty} \delta_{\Lambda}(n) z^n$.
Since $V(z)$ satisfies the hypothesis of Lemma \ref{xxlem6.1},
it is not holonomic.

Let $\PP_U$ be the construction given in Construction \ref{xxcon1.3}.
It is easy to see that
$$\GK(\PP_U)=\GK(U)=1$$
and that
$$G_{\PP_U}(z)=z H_U(z)=z(1+2 z +\frac{3z^2}{(1-z)})+zV(z)$$
where $z(1+2 z +\frac{3z^2}{(1-z)})$ is holonomic,
but $zV(z)$ is not (by Lemma \ref{xxlem6.1} again).
Therefore $G_{\PP_U}(z)$ is not holonomic
by \cite[Holonomic Theorem 2]{Ber14}.

Let $\QQ_U$ be the construction given in Construction \ref{xxcon5.1}.
Then $\GK(\QQ_U)=1$ and
$$\begin{aligned}
G_{\QQ_U}(z)&=z+ z^d H_U(z^{d-1})=z+z^2H_U(z)\\
&= z+ z^2 (1+2 z +\frac{3z^2}{(1-z)}) +z^2 V(z)
\end{aligned}$$
as $d=\dim U_1=2$. By an argument similar to the previous
paragraph, $G_{\QQ_U}(z)$ is not holonomic.
\end{example}

Now we are ready to prove Proposition \ref{xxpro0.5}.

\begin{proof}[Proof of Proposition \ref{xxpro0.5}]
Let $r\in R_{\GK}\setminus \{0\}$. We claim that
there is a monomial algebra $B$ finitely generated
in degree 1 such that $\GK(B)=r$ and that $H_B(z)$
is not holonomic. By \cite[Remark 4.1]{Bel15}, there
is a monomial algebra $A$ finitely generated
in degree 1 such that $\GK(A)=r$. If $H_A(z)$ is
not holonomic, then we are done by setting $B=A$.
If $H_A(z)$ is holonomic, let $B=\FF \oplus (A_{\geq 1} \oplus
U_{\geq 1})$. Then $B$ is a monomial algebra
finitely generated in degree 1 such that $\GK(B)=r$
and that $H_B(z)=H_A(z)+H_U(z)-1$. Since $H_U(z)$ is not
holonomic, by \cite[Holonomic Theorem 2]{Ber14},
$H_{B}(z)$ is not holonomic. So we proved the claim.

Let $\PP=\QQ_B$ as in Construction \ref{xxcon5.1}. Then
$\PP$ is single-generated, $\GK(\PP)=\GK(B)=r$ and
$G_{\PP}=z+z^d H_B(z^{d-1})$.
Since $H_B(z)$ is not not holonomic, it is routine to show
that $G_{\PP}(z)$ is not holonomic.
\end{proof}

The following lemma is easy and the proof is analogous to
the one given in the quadratic case, see the proof
of \cite[Corollary 4.2(i)]{Dot19}.

\begin{lemma}
\label{xxlem6.3}
Let $A$ be a connected graded locally finite
algebra and let $\PP_A$ be the operad constructed in
Construction \ref{xxcon1.3}.
\begin{enumerate}
\item[(1)]
$A$ is finitely generated if and only if so is $\PP_A$.
\item[(2)]
$A$ is finitely presented if and only if so is $\PP_A$.
\end{enumerate}
\end{lemma}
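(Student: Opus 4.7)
The plan is to exploit the fact that, in $\PP_A$, every nontrivial partial composition collapses onto position~$1$: by Construction~\ref{xxcon1.3}, $a\circ_i b$ is a scalar multiple of $a$ whenever $b\in \FF\cdot 1_A$, equals the algebra product $ab$ when $b\in\fm$ and $i=1$, and vanishes otherwise. Hence, inside any free operad $\FFF(\XX)$ mapping onto $\PP_A$, only the \emph{right-comb} tree monomials---single-branched ones in which every internal-to-internal edge sits at position~$1$---can represent nonzero elements of $\PP_A$. I will write $\mathrm{SB}\subseteq\FFF(\XX)$ for the span of right-combs and $\mathrm{NS}$ for the complementary span, so that $\FFF(\XX)=\mathrm{SB}\oplus\mathrm{NS}$. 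Two structural facts will drive the argument: (a) $\mathrm{NS}$ is an operad ideal of $\FFF(\XX)$, because grafting a tree at a leaf can only create or preserve a ``bad'' composition at a position $\neq 1$, never destroy it; and (b) $\mathrm{NS}$ is in fact generated as an operad ideal by the \emph{finite} set of minimal bad patterns $\{x\circ_j y : x,y\in\XX,\ 2\leq j\leq\Ar(x)\}$. Moreover, the operadization map of Section~\ref{xxsec5} provides a linear bijection between $\mathrm{SB}$ and the free algebra on $\sqcup_{n\geq 2}\XX(n)$ under which $\circ_1$-composition at the leftmost leaf corresponds to concatenation of words, while every other way of composing two right-combs lands in $\mathrm{NS}$.

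For part~(1), the forward direction is immediate: a finite set of homogeneous algebra generators $x_1,\dots,x_k$ of $A$, viewed inside $\PP_A$ via $\Ar(x_i)=\deg(x_i)+1$, generates $\PP_A$ as an operad because every element of $A_n$ is a sum of monomials in the $x_i$ and each such monomial lifts to an iterated $\circ_1$-composition in $\PP_A$. Conversely, if $\VV$ is a finite generating subcollection of $\PP_A$, I discard the arity-$1$ part (which contributes only scalars) and take, as algebra generators of $A$, a basis of $\bigsqcup_{n\geq 2}\VV(n)$ viewed in $A_{\geq 1}$; any nonzero iterated composition from $\VV$ descends to an algebra product, so these finitely many elements generate $A$.

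For part~(2), direction $(\Rightarrow)$ is short given the structural facts above: starting from a finite presentation $A=\FF\langle X\rangle/(r_1,\dots,r_l)$, take $\XX$ mirroring $X$ (with arities shifted by one) and let $\SO$ consist of the finitely many minimal bad patterns together with the right-comb preimages of $r_1,\dots,r_l$. Modulo the first group $\FFF(\XX)/\mathrm{NS}$ is identified with $\FF\langle X\rangle$ via the operadization bijection, and the remaining relations cut out exactly the two-sided algebra ideal $(r_1,\dots,r_l)$, yielding $\FFF(\XX)/(\SO)\cong\PP_A$.

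The harder direction $(\Leftarrow)$ is where I expect the main subtlety. Given a finite presentation $\PP_A=\FFF(\XX)/(\SO)$, I first observe that $\mathrm{NS}\subseteq(\SO)$, since every non-right-comb tree monomial already maps to zero in $\PP_A$. Decomposing each $s\in\SO$ as $s=s_{\mathrm{SB}}+s_{\mathrm{NS}}$, both summands then lie in $(\SO)$, so $(\SO)=(\{s_{\mathrm{SB}}\})+(\{s_{\mathrm{NS}}\})$; the second summand sits in $\mathrm{NS}$, whence $\mathrm{SB}\cap(\SO)$ equals the $\mathrm{SB}$-part of the operad ideal $(\{s_{\mathrm{SB}}\})$. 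Under the operadization bijection this $\mathrm{SB}$-part transports to the two-sided algebra ideal of $\FF\langle X\rangle$ generated by the finite set of words corresponding to $\{s_{\mathrm{SB}}\}$, and this must coincide with $\ker(\FF\langle X\rangle\twoheadrightarrow A)$; hence $A$ is finitely presented. The key technical point to verify carefully is that operad ideal closure, restricted to the $\mathrm{SB}$-part, corresponds exactly to two-sided algebra ideal closure---equivalently, that the only way for compositions of right-combs to remain in $\mathrm{SB}$ is via $\circ_1$ at leftmost leaves, and that these compositions correspond precisely to left and right multiplication in $\FF\langle X\rangle$.
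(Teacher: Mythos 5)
Your overall strategy is sound, and since the paper itself omits the proof as ``easy'' there is no official argument to compare against; your right-comb decomposition is a natural way to make the omitted details precise. The core observations are all correct: that in $\PP_A$ only $\circ_1$-compositions at the leftmost leaf can be nonzero (on $\fm$), that the span $\mathrm{NS}$ of non-right-comb tree monomials is an operad ideal of $\FFF(\XX)$ generated by the finitely many patterns $x\circ_j y$ with $j\geq 2$, and that $\mathrm{SB}$ with the induced structure is identified with $\PP_{\FF\langle X\rangle}$. Given that, the cleanest way to nail down your ``key technical point'' is to note that $\FFF(\XX)/\mathrm{NS}\cong\PP_{\FF\langle X\rangle}$ \emph{as operads}, and that a subcollection of $\PP_{\FF\langle X\rangle}$ (with vanishing arity-$1$ component) is an operad ideal if and only if it is a two-sided graded algebra ideal of $\FF\langle X\rangle$ under the identification $\PP_{\FF\langle X\rangle}(n)=\FF\langle X\rangle_{n-1}$, with generating sets matching. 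Then $\pi_{\mathrm{SB}}\bigl((\SO)\bigr)$ is simply the image of $(\SO)$ in the quotient operad $\FFF(\XX)/\mathrm{NS}$; this is an operad ideal generated by the finitely many $\pi_{\mathrm{SB}}(s)$, hence a finitely generated two-sided ideal of $\FF\langle X\rangle$, and its quotient is $A$. This avoids the slightly delicate issue in your formulation of whether $(\{s_{\mathrm{SB}}\})$ respects the $\mathrm{SB}\oplus\mathrm{NS}$ decomposition elementwise.

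Two small corrections. First, the bijection you invoke is not the operadization map of Section~\ref{xxsec5}: that map sends the word $x_{i_1}\cdots x_{i_k}$ (on $d$ degree-one algebra generators) to $a\circ_{i_1}a\circ_{i_2}\cdots\circ_{i_{k}}a$, encoding the letters in the \emph{composition indices} of a single arity-$d$ generator $a$; here you instead want the bijection, tailored to Construction~\ref{xxcon1.3}, sending a word in generators of various degrees to a right-comb with \emph{all} composition indices equal to $1$ and labels running over $\XX$. The two constructions are genuinely different (one underlies Construction~\ref{xxcon5.1}, the other Construction~\ref{xxcon1.3}). Second, in the $(\Leftarrow)$ direction of part~(2) you should, as you did in part~(1), explicitly reduce to the case $\XX(1)=\emptyset$ (and note $\XX(0)=\emptyset$ as in the paper's conventions): arity-$1$ generators land in $\PP_A(1)=\FF$ and can be traded for the identity at the cost of finitely many extra relations; otherwise the span of right-combs does not correspond to a locally finite free algebra. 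With these repairs the argument is complete.
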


For the rest of this section, we construct a finitely
presented locally finite ns operad such that its
generating series is not holonomic. Therefore we give
a ``non-generic'' counterexample to
\cite[Expectation 2]{KP15} (see Expectation \ref{xxexp0.4}).

Let us recall some history in the setting of
noncommutative graded algebra. In 1972 Govorov
conjectured that the Hilbert series of a finitely
presented graded algebra is rational \cite{Gov72}.
This was shown to be false, for example, by Shearer
in \cite{She80} by constructing an irrational (but
algebraic) Hilbert series of a finitely generated graded
algebra. Shearer mentioned a similar construction
giving also an example with a transcendental (but still
holonomic) Hilbert series. His third example,
involving the generating function of the number of
partitions, indeed has its Hilbert series not holonomic.
A similar example was given in \cite{Smi76}
which contains the following example as a special case.

\begin{example}
\label{xxex6.4}
Suppose ${\rm{char}}\; \FF=0$.
Let $L$ be the graded Lie algebra with basis $\{e_1,
e_2,\cdots, e_n,\cdots\}$ with $\deg e_i=i$ for all $i$
and Lie bracket determined by
$$[e_i, e_j]=(i-j) e_{i+j}, \quad \forall \; i\neq j.$$
This is a subalgebra of the Witt algebra.
Let $A$ be the universal enveloping algebra of $L$.
Then $A$ has intermediate growth and is generated by
$e_1$ and $e_2$ ($\deg e_i=i$) and subject to the
relations $e_3 e_2-e_2e_3=e_5$ and $e_5 e_2-e_2e_5=3e_7$.
So $A$ is finitely presented, but not generated in
degree 1.

It is easy to see that
$$H_A(z)=\prod_{i=1}^{\infty} \frac{1}{(1-z^i)}.$$
Note that $H_A(z)$ is equal to $P(z):=\sum_{n=0}^{\infty} p(n) z^n$
where $p(n)$ is the number of partitions of $n$.

Now we list some facts about $P(z)$:
\begin{enumerate}
\item[(P1)]\cite[p.187]{Sta80}
$P(z)$ is not holonomic.
\item[(P2)]
$zP(z)$ is not holonomic.
\item[(P3)]
$Q(z):=z(zP(z))'(=\sum_{n=0}^{\infty} p(n) (n+1)z^{n+1})$
is not holonomic.
\end{enumerate}

Note that Parts (P2,P3) follow from part (P1) immediately.

Now let $\PP_A$ be the operad given by Construction \ref{xxcon1.3}.
By Lemma \ref{xxlem6.3}, $\PP_A$ is finitely generated.
By \eqref{E1.3.2}
$$G_{\PP_A}(z)=zH_{A}(z)=zP(z)$$
which is not holonomic by (P2). Therefore
$\PP_A$ is a ``non-generic'' counterexample to
\cite[Expectation 2]{KP15}.

Note that there is a quadratic algebra with similar properties
on its Hilbert series, but with 14 generators and 96 quadratic
relations \cite[Theorem 1(iv)]{Koc15}. The algebra $A$ could be
replaced by the Kocak's example.
\end{example}

\begin{remark}
\label{xxrem6.5}
Using the same proof as above, one sees that
a version of Proposition \ref{xxpro0.5}
holds for exponential generating series. See \eqref{E6.0.1}
for the definition of the exponential generating series.
\end{remark}

\section{Comments on symmetric operads}
\label{xxsec7}

In this final section we make some comments on symmetric operads
and prove the main result of this section, namely, Theorem
\ref{xxthm0.7}.

First of all we refer to \cite[Chapter 5]{LV12} or
\cite[Definition 1.2]{BYZ20} for the definition of a symmetric
operad.

Given a graded algebra with augmentation, there is an easy
construction of a symmetric operad similar to the one given
in Construction \ref{xxcon1.3}.

\begin{construction}
\label{xxcon7.1}
Let $A:=\oplus_{i\geq0} A_i$ be a locally finite $\NN$-graded
algebra with unit $1_A$. Suppose that $A$ has a graded
augmentation $\epsilon: A\to \FF$ such that $\fm:=\ker \epsilon$
is a maximal graded ideal of $A$. We let $C_n$ be the cyclic group
of order $n$ and elements in $C_n$ are denoted by $\{1,\cdots,n\}$.

Let $\SO_A(0)=0$ and $\SO_A(n)=A_{n-1}\otimes \FF C_n$ for all
$n\geq 1$. Elements in $\SO_A(n)$ are $\FF$-linear combinations
of $(a,i)$ where $a\in A_{n-1}$ and $1\leq i\leq n$. The $\SG$-action
on $\SO_A(n)$ is determined by
$$(a,i)\ast \sigma=(a, \sigma^{-1}(i))$$
for all $\sigma \in \SG_n$.

Define partial compositions as follows, for $1\leq s\leq m$,
$$\begin{aligned}
\circ_s: \quad & \quad \SO_A(m)\otimes \SO_A(n)\to \SO_A(n+m-1),\\
&\quad (a_{m-1}, i)  \otimes (a_{n-1}, j) \mapsto
\begin{cases}
(c a_{m-1}, i) &a_{n-1}=c1_A, c\in \FF,\\
(a_{m-1}a_{n-1}, i+j-1) & a_{n-1}\in \fm, s=i,\\
0 & a_{n-1}\in \fm, s\neq i.\end{cases}
\end{aligned}
$$
We claim that that $\SO_A:=\{\SO_A(n)\}_{n\geq 0}$
is a symmetric operad with identity $\id=(1_A,1)$. We give
a sketch proof below. 

We refer to \cite[Section 5.3.4]{LV12} for the partial
definition of a symmetric operad. In fact, a symmetric
operad $\PP$ is an $\SG$-module satisfying
the axioms of a nonsymmetric operad [Definition \ref{xxdef1.1}]
and the following two additional equations
\begin{align}
\label{E7.1.1}\tag{E7.1.1}
\mu \circ_s (\nu\ast \sigma) &= (\mu\circ_s \nu)\ast \sigma'\\
\label{E7.1.2}\tag{E7.1.2}
(\mu\ast \phi)\circ_s \nu&=(\mu\circ_{\phi(s)} \nu)\ast \phi''
\end{align}
where $\mu\in \PP(m), \nu\in \PP(n), 1\leq s\leq m, \sigma\in \SG_{n},
\phi\in \SG_{m}$ and where $\sigma'=\bfone_m \circ_s \sigma$ and
$\phi''=\phi\circ_s \bfone_n$. See \cite[Section 5.3.4]{LV12} and
\cite[(E1.2.1) and (E8.1.3)]{BYZ20} for the explanation of
$\sigma'$ and $\phi''$.
We first verify \eqref{E7.1.1} and \eqref{E7.1.2} and
then the rest of axioms given in Definition \ref{xxdef1.1}
for $\PP:={\mathcal S}_A$.

\medskip
\noindent
{\bf Verification of \eqref{E7.1.1}:} Write $\mu=(a_{m-1}, i)$ and
$\nu =(a_{n-1},j)$. If $n=1$ and $a_{n-1}=c 1_A$ (or if $n=1$
and $a_{n-1}\in \fm$), then $j=1$ and $\sigma={\bfone}_1\in \SG_1$ and
$\sigma'={\bfone}_{n+m-1}\in \SG_{n+m-1}$. Clearly
\eqref{E7.1.1} holds. If $n\geq 2$ and $s\neq i$, both sides of
\eqref{E7.1.1} are zero. It remains to consider the case
when $n\geq 2$ and $s=i$. Write
\begin{equation}
\label{E7.1.3}\tag{E7.1.3}
\sigma=\begin{pmatrix} k_1 & k_2 &\cdots & k_n\\
1&2&\cdots& n\end{pmatrix}
\end{equation}
where by convention $k_i=\sigma^{-1}(i)$ for all $i$.
Then, by definition,
$${\small \sigma'=\begin{pmatrix} 1 &\cdots &i-1 & k_1+i-1 & k_2+i-1 &\cdots & k_n+i-1 &
i+n&\cdots &n+m-1\\
1 &\cdots & i-1 & i& i+1 &\cdots & n+i-1 &
i+n&\cdots & n+m-1\end{pmatrix}.}$$
In this case, we have
$$\begin{aligned}
{\text{LHS of \eqref{E7.1.1}}}
&=(a_{m-1},i)\circ_i [(a_{n-1}, j)\ast \sigma]
 =(a_{m-1},i)\circ_i (a_{n-1}, \sigma^{-1}(j))\\
&=(a_{m-1}a_{n-1}, i+\sigma^{-1}(j)-1)
 =(a_{m-1}a_{n-1}, i+k_j-1)\\
{\text{RHS of \eqref{E7.1.1}}}
&=[(a_{m-1},i)\circ_i (a_{n-1}, j)]\ast \sigma'
 =(a_{m-1}a_{n-1}, i+j-1)\ast \sigma'\\
&=(a_{m-1}a_{n-1}, (\sigma')^{-1}(i+j-1))
 =(a_{m-1}a_{n-1}, k_j+i-1).
\end{aligned}$$
Therefore \eqref{E7.1.1} holds.

\medskip
\noindent
{\bf Verification of \eqref{E7.1.2}:} Recycle the letter $k_i$ and write
$$\phi=\begin{pmatrix} k_1 & k_2 &\cdots & k_m\\
1&2&\cdots& m\end{pmatrix}
$$
using the convention of \eqref{E7.1.3}. If $a_{n-1}=c1_A$, then
$n=j=1$ and $\phi''=\phi\circ_s \bfone_1=\phi$. Consequently,
$\phi^{-1}(i)=k_i=(\phi'')^{-1}(i)$. In this case,
$$\begin{aligned}
{\text{LHS of \eqref{E7.1.2}}}
&=[(a_{m-1},i)\ast \phi]\circ_s (c1_A, 1)
 =(a_{m-1},\phi^{-1}(i))\circ_s (c1_A, 1)\\
&=(c a_{m-1},\phi^{-1}(i))=(c a_{m-1},k_i)\\
{\text{RHS of \eqref{E7.1.2}}}
&=[(a_{m-1},i)\circ_{\phi(s)} (c1_A,1)]\ast \phi''
 =(ca_{m-1}, i)\ast \phi''\\
&=(ca_{m-1}, i)\ast \phi=(ca_{m-1}, k_i).
\end{aligned}$$
Hence \eqref{E7.1.2} holds. Next we assume that $a_{n-1}\in \fm$.
If $s\neq \phi^{-1}(i)$, then both sides of
\eqref{E7.1.2} are zero. It remains to consider the case
when $s=\phi^{-1}(i)$ or $i=\phi(s)$. By definition,
$${\small
\phi''=
\begin{pmatrix}
k'_1 & \cdots & k'_{i-1} & k'_{i} & k'_{i}+1 &\cdots
& k'_i+n-1& k'_{i+1}& \cdots & k'_m\\
1    & \cdots & i-1      & i      &i+1       &\cdots
&i+n-1    & i+n     & \cdots & m+n-1
\end{pmatrix}
}$$
where $k'_{t}=\begin{cases} k_t+n-1 & k_t>s \\
k_t & k_t \leq s\end{cases}$. In particular,
$$(\phi'')^{-1}(i+j-1)=k'_{i}+j-1=s+j-1=k_i+j-1=\phi^{-1}(i)+j-1.$$
Now we compute
$$\begin{aligned}
{\text{LHS of \eqref{E7.1.2}}}
&=[(a_{m-1},i)\ast \phi]\circ_s (a_{n-1}, j)
 =[(a_{m-1},i)\ast \phi]\circ_{\phi^{-1}(i)} (a_{n-1}, j)\\
&=(a_{m-1},\phi^{-1}(i))\circ_{\phi^{-1}(i)} (a_{n-1}, j)
 =(a_{m-1}a_{n-1}, \phi^{-1}(i)+j-1)\\
{\text{RHS of \eqref{E7.1.2}}}
&=[(a_{m-1},i)\circ_{\phi(s)} (a_{n-1}, j)]\ast \phi''
 =[(a_{m-1},i)\circ_{i} (a_{n-1}, j)]\ast \phi''\\
&=(a_{m-1}a_{n-1}, i+j-1)\ast \phi''
 =(a_{m-1}a_{n-1}, (\phi'')^{-1}(i+j-1))\\
&=(a_{m-1}a_{n-1}, \phi^{-1}(i)+j-1)
\end{aligned}
$$
which implies that \eqref{E7.1.2} holds in this case.
So we verified \eqref{E7.1.2} for all cases.

\medskip
\noindent
{\bf Verification of \eqref{E1.1.3}:} It follows easily from the
definition that
$$(1_A, 1)\circ_1 (a_{n-1},j)=(a_{n-1}, j)
=(a_{n-1}, j)\circ_s (1_A, 1)$$
for all $1\leq s\leq n$, which is \eqref{E1.1.3}.

\medskip
\noindent
{\bf Verification of \eqref{E1.1.2}:} Recall that
\eqref{E1.1.2} is equivalent to
\begin{equation}
\label{E7.1.4}\tag{E7.1.4}
(\lambda \circ_{s} \mu) \circ_{t-1+m} \nu =(\lambda\circ_t \nu)\circ_{s}\mu
\end{equation}
for $\lambda\in \PP(l)$, $\mu\in \PP(m)$, $\nu\in
\PP(n)$ and $1\leq s<t \leq l$.
Write $\lambda=(a_{l-1}, i)$, $\mu=(a_{m-1}, j)$ and $\nu=(a_{n-1},k)$.
If the LHS of \eqref{E7.1.4} were nonzero, we must have $s=i$ and
$s+j-1=t-1+m$. But $s<t$ and $j\leq m$ which contradict
the equation $s+j-1=t-1+m$. So the LHS of \eqref{E7.1.4}
is zero. For a similar reason, the RHS of \eqref{E7.1.4} is zero.
Hence \eqref{E7.1.4} holds.

\medskip
\noindent
{\bf Verification of \eqref{E1.1.1}:} Recall that
\eqref{E1.1.1} is equivalent to
\begin{equation}
\label{E7.1.5}\tag{E7.1.5}
(\lambda \circ_{s} \mu) \circ_{s-1+t} \nu =\lambda\circ_s (\mu\circ_{t}\nu)
\end{equation}
for $\lambda\in \PP(l)$, $\mu\in \PP(m)$, $\nu\in
\PP(n)$, $1\leq s \leq l$ and $1\leq t\leq m$.
Write $\lambda=(a_{l-1}, i)$, $\mu=(a_{m-1}, j)$ and $\nu=(a_{n-1},k)$
as before. If the LHS of \eqref{E7.1.5} is nonzero, then we must have
both $s=i$ and $t=j$. Similarly, if the RHS of \eqref{E7.1.5} is nonzero,
then $s=i$ and $t=j$. So it suffices to consider the case
when $s=i$ and $t=j$. In this case, both sides of \eqref{E7.1.5}
are equal to $(a_{l-1}a_{m-1}a_{n-1},i+j+k-2)$. Hence
\eqref{E7.1.5} holds.

Now we have verified all axioms of a symmetric operad for $\PP:=\SO_A$
and therefore $\SO_A$ is a symmetric operad.

It is obvious that
\begin{equation}
\label{E7.1.6}\tag{E7.1.6}
G_{\SO_A}(z)=z(zH_A(z))'.
\end{equation}
\end{construction}

\begin{lemma}
\label{xxlem7.2}
Let $A$ be a connected graded algebra and let $\SO_A$ be the
symmetric operad provided in Construction \ref{xxcon7.1}.
\begin{enumerate}
\item[(1)]
If $A$ is finitely generated, then $\SO_A$ is finitely generated
as a symmetric operad (resp. as a nonsymmetric operad).
\item[(2)]
If $A$ is finitely presented, then $\SO_A$ is finitely presented
as a nonsymmetric operad.
\item[(3)]
If $A$ is finitely presented, then $\SO_A$ is finitely presented
as a symmetric operad.
\end{enumerate}
\end{lemma}

\begin{proof}
We continue to use the notation introduced in Construction 
\ref{xxcon7.1}.

(1) Let $\fm=\oplus_{i\geq 1} A_i$. Suppose $V\subseteq \fm$ is a 
finite dimensional graded subspace that generates $A$. Let
$\{v_1,\cdots, v_e\}$ be a basis of $V$. We claim that $\SO_A$
is generated by
$E:=\sum_{i=1}^{e} (\sum_{j=1}^{\deg (v_i)+1} \FF (v_i, j))$
where $\deg(v_i)$ is as defined in the graded algebra $A$.
Note that $\Ar((v_i,j))=\deg(v_i)+1$. Since every element in
$\SO_A$ is of the form $(a_{n-1}, d)$ where $1\leq d\leq n$ and
$a_{n-1}$ is generated by $V$ in $A$, it can be generated
by $(v_i,j)$ by partial compositions. Therefore $\SO_A$ is finitely
generated as a nonsymmetric operad (resp. as a symmetric operad).

(2) Suppose $A$ is generated by a finite dimensional graded 
subspace $V$ and subject to a finite dimensional relation 
subspace $W:=\sum_{j=1}^w \FF f_j$. Then $A$ is the factor
algebra $\FF\langle V\rangle/(W)$ where $\FF\langle V\rangle$
is the free algebra generated by $V$ and where $(W)$ denotes the 
relation ideal generated by $W$. Note that $\FF\langle V\rangle$
is connected graded. Let $E$ be defined as in the proof of part 
(1) and let ${\mathcal F}^{ns}(E)$ be the free nonsymmetric
operad generated by $E$ (see \cite[5.9.6]{LV12} for the definition).

Our first step is to define the relation subspace $R$ of the 
${\mathcal F}^{ns}(E)$ such that $\SO_A\cong {\mathcal F}^{ns}(E)/(R)$.
For each homogeneous element $g$ in the free algebra 
$\FF\langle V\rangle$, fix an {\it expression} of $g$ as a 
linear combination of possibly repeated monomials
\begin{equation}
\label{E7.2.1}\tag{E7.2.1}
g=\sum
c_{\bullet} \; v_{i_1} \cdots v_{i_s} \cdots v_{i_u}
\end{equation}
where the sum is over $\bullet:=(i_1,\cdots,i_s, \cdots, i_{u})$
and where $c_{\bullet}$ are scalars in $\FF$.
Note that monomials $\{v_{i_1} \cdots v_{i_s} \cdots v_{i_u}\}$ in 
expression \eqref{E7.2.1} are not assumed to be distinct. For each 
term $v_{i_1} \cdots v_{i_s} \cdots v_{i_u}$ appearing in 
\eqref{E7.2.1}, let $\{k_{i_s}\}_{s=1}^u$ be a sequence of integers such 
that $1\leq k_{i_s}\leq \deg(v_{i_s})+1$ for all $1\leq s\leq u$. 
We will use $\bfk_{\bullet}$ to denote $\{k_{i_s}\}_{s=1}^u$. Define
$$|\bfk_{\bullet}|:=\sum_{s=1}^u k_{i_s}-u+1.$$
Fix any integer $d$ between 1 and $\deg (g)+1$. For each
$\bullet:=(i_1,\cdots,i_u)$ appeared in \eqref{E7.2.1},
pick any sequence $\{k_{i_s}\}_{s=1}^u$ such that $|\bfk_{\bullet}|
=d$. Such a sequence $\{k_{i_s}\}_{s=1}^u$ is called a
{\it $d$-sequence}. Let $\bfk_d$ be the collection of 
$d$-sequences associated to expression 
\eqref{E7.2.1}. Now we define the following element in the 
free operad ${\mathcal F}^{ns}(E)$
\begin{equation}
\label{E7.2.2}\tag{E7.2.2}
r_{\bfk_d}(g):=\sum_{\bfk_{\bullet}\in \bfk_d}
c_{\bullet} \; (v_{i_1}, k_{i_1})
\circ_{k_{i_1}} (v_{i_2}, k_{i_2})
\circ_{k_{i_2}} \cdots \circ_{k_{i_{u-1}}}
(v_{i_u}, k_{i_u})
\end{equation}
where each $(v_{i_1}, k_{i_1})
\circ_{k_{i_1}} (v_{i_2}, k_{i_2})
\circ_{k_{i_2}} \cdots \circ_{k_{i_{u-1}}}
(v_{i_u}, k_{i_u})$ is a right normal tree monomial
defined in \eqref{E4.0.1}-\eqref{E4.0.2}.

For example, let
$$h= v_1 v_4 -2 v_2 v_3 +3 v_1 v_4$$
be in $\FF\langle V \rangle$ where $\deg v_i=i$ for $1\leq i\leq 4$. 
Note that the first monomial in the above expression of $h$ equals 
the third one. Let $d=4$ and we pick three $d$-sequences $\{1, 4\}$,
$\{3,2\}$ and $\{2,3\}$ corresponding to three monomials in 
the expression of $h$. Then 
$$r_{\bfk_{4}}(h)=(v_1, 1)\circ_1 (v_4, 4)
-2 (v_2,3)\circ_3 (v_3,2)+3 (v_1,2)\circ_2 (v_4,3).$$

Let $g'$ be another homogeneous element of degree equal to
$\deg(g)$ and fix an expression of $g'$ similar to 
\eqref{E7.2.1}. Let $c$ and $c'$ be scalars in $\FF$ and 
let $f=cg +c' g'$ with expression of $f$ induced by 
the expression of $g$ and $g'$. Let $\bfk_{d}$ (resp. 
$\bfk'_{d}$) be a collection of $d$-sequences of $g$ 
(resp. $g'$) corresponding to the monomials appeared 
in \eqref{E7.2.1}. Then the disjoint union 
$\bfk_{d} \biguplus \bfk'_d$ is a collection of of 
$d$-sequences corresponding to the expression of $f$. 
It follows from the definition that
\begin{equation}
\label{E7.2.3}\tag{E7.2.3}
r_{\bfk_{d} \biguplus \bfk'_d}(f)
=c r_{\bfk_{d}}(g)+c'r_{\bfk'_d}(g').
\end{equation}

For any given $d$, there are only finitely many possibilities 
for $d$-sequences $\{k_{i_s}\}_{s=1}^u$, and consequently, for
$r_{\bfk_d}(g)$. 

Now let $f_i$ be an element in the relation space $W$ and fix an 
expression of $f_i$ as in \eqref{E7.2.1}. It is easy 
to check that all of 
\begin{equation}
\label{E7.2.4}\tag{E7.2.4}
r_{\bfk_d}(f_j)=0
\end{equation}
are relations of $\SO_A$ for $1\leq d\leq \deg f_j$ and different 
choices of $d$-sequences. By the definition of $\SO_A$, the 
following are also relations of $\SO_A$:
\begin{equation}
\label{E7.2.5}\tag{E7.2.5}
(v_{i_1}, j_1)\circ_l (v_{i_2}, j_2)=0
\end{equation}
for all $(v_{i_s}, j_s) \in E$ and for all $l\neq j_1$
and
\begin{equation}
\label{E7.2.6}\tag{E7.2.6}
(v_{i_1}, j_1)\circ_{j_1} (v_{i_2}, j_2)
-(v_{i_1}, j_1-1)\circ_{j_1-1} (v_{i_2}, j_2+1)=0
\end{equation}
for all $(v_{i_s}, j_s) \in E$ and for all $1<j_1\leq \deg v_{i_1}+1$ and
$1\leq j_2< \deg(v_{i_2})+1$.
Let $R$ be the graded vector subspace generated by the left-hand side 
of equations \eqref{E7.2.4}, \eqref{E7.2.5}, and \eqref{E7.2.6}. 

By part (1) there is a surjective morphism of nonsymmetric operads
\begin{equation}
\notag%\label{E7.2.7}\tag{E7.2.7}
\Phi: {\mathcal F}^{ns}(E)\to \SO_A
\end{equation}
sending $(v_i,j)\in E\subseteq {\mathcal F}^{ns}(E)$ 
to $(v_i,j)\in \SO_A$. It is clear that $\Phi$ maps 
relations defined in \eqref{E7.2.4}-\eqref{E7.2.6} 
to zero. Therefore $\Phi$ induces naturally a 
surjective morphism of nonsymmetric operads
\begin{equation}
\label{E7.2.7}\tag{E7.2.7}
\phi: {\mathcal F}^{ns}(E)/(R)\to \SO_A,
\end{equation}
such that $\Phi=\phi\circ \pi$ where we denote $\pi$ the 
canonical quotient map ${\mathcal F}^{ns}(E)
\to {\mathcal F}^{ns}(E)/(R)$.

Our next step is to show that the natural map $\phi$ 
is injective (consequently, bijective). 

We need the following notion of leading form.
For each tree monomial $t\in{\mathcal F}^{ns}(E)/(R)$ of the form
$$t=(v_{i_1}, j_1) \circ_{j_1} (v_{i_2}, j_2)
\circ_{j_2} \cdots \circ_{j_{m-1}}(v_{i_m}, j_{m}),$$
there exist finitely many elements of the form 
$$t'=(v_{i_1}, j'_1) \circ_{j'_1} (v_{i_2}, j'_2)
\circ_{j'_2} \cdots \circ_{j'_{m-1}}
(v_{i_m}, j'_{m}),$$
such that $ j'_1+\cdots +j'_m=j_1+\cdots +j_m$ and 
$1\leq j'_l\leq \deg(v_{i_l})+1$ for $1\leq l\leq m$. 
Call the largest tree monomial (under the path-lexicographic order) 
among these elements the {\it leading form} of $t$, denote it by 
$L(t)$. Let $(R_{7.2.6})$ be the ideal of 
${\mathcal F}^{ns}(E)$ generated by relations given in 
\eqref{E7.2.6}. It follows from \eqref{E7.2.6} that 
$L(t)-t\in(R_{7.2.6})$ and it is clear that $L(t')-L(t)=0$ 
for any choice of $(j'_1,\cdots,j'_{m})$. As a consequence,
modulo the ideal $(R_{7.2.6})$, $r_{\bfk_d}(g)$ 
is independent of the expressions of $g$ given in \eqref{E7.2.1} 
and the choices of $\bfk_{d}$ -- the collection of 
$d$-sequences. 
% and that $r_{\bfk_d}(-)$ is a $\FF$-linear map from 
%$\FF\langle V\rangle$ to ${\mathcal F}^{ns}(E)/(R_{7.2.6})$. 
By pre-composing with $\pi$, the argument in this paragraph 
together with \eqref{E7.2.3} implies that $\pi r_{\bfk_d}(g)$ 
is independent of the expressions of $g$ 
and the choices of $d$-sequences and that $\pi r_{\bfk_{d}}$
is a well defined $\FF$-linear map from $\FF\langle V\rangle
\to {\mathcal F}^{ns}(E)/(R)$.

Fix any $n$, let $\{a_\alpha\}_{\alpha\in I_{n}}$ be a monomial 
basis of $A_{n-1}$ for some index set $I_n$. Then 
$\{(a_\alpha,i)\}_{\alpha\in I_n, 1\leq i\leq n}$ is an $\FF$-linear 
basis of $\SO_A(n)$. For any 
$a_\alpha=v_{\alpha,1}v_{\alpha,2}\cdots v_{\alpha,m_\alpha}$, 
let $b'_{\alpha,i}\in{\mathcal F}^{ns}(E)/(R)$ be a monomial 
of the following form
$$b'_{\alpha,i}=(v_{\alpha,1},j'_1)\circ_{j'_1}
(v_{\alpha,2},j'_2)\circ_{j'_2}\cdots \circ_{j'_{m_\alpha-1}}
(v_{\alpha,m_\alpha},j'_{m_\alpha}),$$
where $j'_1+\cdots+j'_{m_\alpha}=i+m_\alpha-1$ and 
$1\leq j'_l\leq \deg(v_{\alpha,l})+1$ for $1\leq l\leq m_\alpha$. 
Then $b'_{\alpha,i}\in\phi^{-1}((a_\alpha,i))$. Take $b_{\alpha,i}$ 
to be the largest one among the monomials of this form, i.e.,  
$b_{\alpha,i}:=L(b'_{\alpha,i})$, which is independent of the 
choice of $b'_{\alpha,i}$. By definition, both $b_{\alpha,i}$
and $b'_{\alpha,i}$, considered as elements in ${\mathcal F}^{ns}(E)$, 
are of the form $r_{\bfk_{i}}(a_{\alpha})$ for some choices 
of $i$-sequences $\bfk_{\bullet}$. By \eqref{E7.2.6}, 
$b_{\alpha,i}=b'_{\alpha,i}$ in ${\mathcal F}^{ns}(E)/(R)$. 
The conclusion of this paragraph is that $b_{\alpha,i}
=\pi r_{\bfk_i}(a_{\alpha})$ for $1\leq i\leq n$ and any
collection of $i$-sequences.

By definition, any monomial $s\in{\mathcal F}^{ns}(E)/(R)(n)$ (or
in ${\mathcal F}^{ns}(E)(n)$) is left normal, namely,
$$s=(\cdots((v_{i_1}, j_1) \circ_{k_1} (v_{i_2}, j_2))
\circ_{k_2} \cdots) \circ_{k_{m-1}}
(v_{i_m}, j_{m}).$$
Using the relations of the form \eqref{E7.2.5}, $s$ in 
${\mathcal F}^{ns}(E)/(R)(n)$ is either zero or equal to
\begin{equation}
\label{E7.2.8}\tag{E7.2.8}
s=(v_{i_1}, j_1) \circ_{j_1} (v_{i_2}, j_2)
\circ_{j_2} \cdots\circ_{j_{m-1}} (v_{i_m}, j_{m}),
\end{equation}
namely, it is right normal, see \eqref{E4.0.1}-\eqref{E4.0.2}.
For the rest of the proof of part (2), we will only 
use right normal tree monomials in ${\mathcal F}^{ns}(E)$ 
and these will simply be called {\it monomials}. For each 
monomial $s$, we will freely replace $s$ by $L(s)$ and 
vice versa as $L(s)=s$ in ${\mathcal F}^{ns}(E)/(R)$ by 
\eqref{E7.2.6}.

To prove $\phi$ is injective, it suffices to show that 
$$
\dim_{\FF}(\SO_A(n))\geq \dim_{\FF} {\mathcal F}^{ns}(E)/(R)(n)\ \ 
\text{ for all } n.
$$
Then it is enough to show that every $s$ 
in \eqref{E7.2.8} can be presented as a linear combination of 
$\{b_{\alpha,i}\}_{\alpha\in I_n, 1\leq i\leq n}$ in 
${\mathcal F}^{ns}(E)/(R)(n)$. 

Since $\Ar((v_{i_1}, j_1))+\cdots+\Ar((v_{i_m}, j_{m}))=n+m-1$, 
the element $v_{i_1}v_{i_2}\cdots v_{i_m}\in A_{n-1}$ and it 
can be presented in the free algebra generated by $V$ as follows:
\begin{equation}
\notag%\label{E7.2.9}\tag{E7.2.9}
v_{i_1}v_{i_2}\cdots v_{i_m}=
\sum_\alpha c_\alpha a_\alpha+\sum_\gamma c_\gamma v_{\gamma,1}
\cdots v_{\gamma,p_\gamma} g_\gamma v_{\gamma,p_\gamma+1}
\cdots v_{\gamma,q_\gamma}
\end{equation}
where $c_\alpha,c_\gamma\in\FF$ and $g_\gamma\in \{f_{j}~|~1\leq j\leq w\}$. 
Rewrite the above equation as
\begin{equation}
\label{E7.2.9}\tag{E7.2.9}
v_{i_1}v_{i_2}\cdots v_{i_m}-
\sum_\alpha c_\alpha a_\alpha=\sum_\gamma c_\gamma v_{\gamma,1}
\cdots v_{\gamma,p_\gamma} g_\gamma v_{\gamma,p_\gamma+1}
\cdots v_{\gamma,q_\gamma}.
\end{equation}
We claim that $r_{\bfk_d}(g)\in (R)$, or equivalently,
$\pi r_{\bfk_d}(g)=0$, where $g$ is the right-hand side 
of \eqref{E7.2.9}. Note that each monomial in right-hand side 
of \eqref{E7.2.9} has degree $n-1$. If the claim holds, then, 
for $d:=\sum_{w=1}^m j_w-m+1$,
$$s=\pi r_{\bfk_d}(v_{i_1}v_{i_2}\cdots v_{i_m})
=\pi r_{\bfk_d}(\sum_{\alpha} c_{\alpha} a_{\alpha})
=\sum_{\alpha} c_{\alpha} \pi r_{\bfk_d}(a_{\alpha})
=\sum_{\alpha} c_{\alpha} b_{\alpha,d}$$
in ${\mathcal F}^{ns}(E)/(R)$.
Now we prove the claim. Since $\pi r_{\bfk_d}(-)$ is additive, 
we may assume that $g$ only has one term, namely,
\begin{equation}
\label{E7.2.10}\tag{E7.2.10}
g=v_{1}\cdots v_{p} f v_{p+1}\cdots v_{q}
\end{equation}
where $f$ is one of $f_i$ in $W$. Write $f$ as a linear 
combination of monomials, say $\{v_{f,1}\cdots v_{f,m_f}\}$. 
Then \eqref{E7.2.10} can be considered as an expression of
$g$ which is a linear combination of monomials of the form 
\begin{equation}
\label{E7.2.11}\tag{E7.2.11}
s_h:=v_{1}\cdots v_{p} v_{f,1}\cdots 
v_{f,m_f} v_{p+1}\cdots v_{q}.
\end{equation}  
For any $d$-sequence 
$\bfk_{\bullet}$ corresponding to \eqref{E7.2.11}, 
it can be decomposed into $\bfk(b)_{\bullet}:=\{k_{l}\}_{i=1}^p$ 
and $\bfk(m)_{\bullet}:=\{k_{f,l}\}_{l=1}^{m_f}$, and 
$\bfk(e)_{\bullet}:=\{k_{l}\}_{k=p+1}^q$
such that $1\leq k_{l}\leq \deg(v_{l})+1$ for $1\leq l\leq q$, 
$1\leq k_{f,l}\leq \deg(v_{f,l})+1$ for $1\leq l\leq m_f$, 
and 
$$\sum_{l=1}^{q}k_{l}+\sum_{l=1}^{m_f}k_{f,l}-q-m_f+1=d.$$
Note that $\bfk(b)_{\bullet}$ (resp. $\bfk(f)_{\bullet}$, 
$\bfk(e)_{\bullet}$) is the beginning part (resp. the middle part, 
the ending part) of $\bfk_{\bullet}$. For different $s_h$, since
$\deg v_{f,1}\cdots v_{f,m_f}=\deg f$ which is independent of the 
individual monomial $v_{f,1}\cdots v_{f,m_f}$, one can easily 
choose $\bfk_{\bullet}$ such that $\bfk(b)_{\bullet}$ and 
$\bfk(e)_{\bullet}$ are independent of the middle part 
$v_{f,1}\cdots v_{f,m_f}$. Therefore 
$$r_{\bfk_d}(s_h)=r_{\bfk(b)_{d_b}}(v_{1}\cdots v_{p})\circ_{d_b}
r_{\bfk(f)_{d_f}}(v_{f,1}\cdots v_{f,m_f}) \circ_{d_f}
r_{\bfk(e)_{d_e}}(v_{p+1}\cdots v_{q})$$
for some fixed $\bfk(b)_{d_b}$, $\bfk(e)_{d_e}$ and $d_f$. 
This fact implies that 
$$r_{\bfk_d}(g)=r_{\bfk(b)_{d_b}}(v_{1}\cdots v_{p})\circ_{d_b}
r_{\bfk(f)_{d_f}}(f)\circ_{d_f}
r_{\bfk(e)_{d_e}}(v_{p+1}\cdots v_{q})
\in (R).$$
So we proved that $r_{\bfk_d}(g)\in (R)$ as desired.

(3) By part (1), there is a surjective morphism of 
symmetric operads
$$\Psi: {\mathcal F}^{sy}(E)\to \SO_A$$
where ${\mathcal F}^{sy}(E)$ is the free symmetric operad
generated by $E$. Let $R$ be the relation subspace defined
in the proof of part (2). It is clear that 
$\Psi$ maps $R$ to 0. Hence $\Psi$ induces naturally 
a surjective morphism of symmetric operads
$$\psi: {\mathcal F}^{sy}(E)/(R\ast \SG) \to \SO_A$$
where $R\ast \SG$ is the space generated by $f\ast \sigma$
for all $f\in R$ and $\sigma\in \SG_{\Ar(f)}$.
By the universal property, there is a morphism of 
nonsymmetric operads ${\mathcal F}^{ns}(E)\to
{\mathcal F}^{sy}(E)/(R\ast \SG)$ which induces a
morphism of nonsymmetric operads ${\mathcal F}^{ns}(E)/(R)\to
{\mathcal F}^{sy}(E)/(R\ast \SG)$. So we have the following
sequence of morphisms
$${\mathcal F}^{ns}(E)/(R)\xrightarrow{\theta} 
{\mathcal F}^{sy}(E)/(R\ast \SG)\xrightarrow{\psi} \SO_A.$$
By part (2), the composition is an isomorphism and consequently 
$\theta$ is injcetive. For simplicity, we  consider $\theta$ as an 
inclusion and identity 
$f\in {\mathcal F}^{ns}(E)/(R)$ with $\theta(f)
\in {\mathcal F}^{sy}(E)/(R\ast \SG)$.

It follows from the Equivariance Axiom \cite[Definition 1.2(OP3')]{BYZ20},
every element in ${\mathcal F}^{sy}(E)/(R\ast \SG)$ is a linear
combination of elements of the form $s \ast \sigma$ where 
$s\in {\mathcal F}^{ns}(E)/(R)$ and $\sigma\in \SG$. It remains
to show the claim that every element of the form $s \ast \sigma$ is 
in fact in ${\mathcal F}^{ns}(E)/(R)$. By the proof of part (2), we
may further assume that
$$s=(v_{i_1}, j_1) \circ_{j_1} (v_{i_2}, j_2)
\circ_{j_2} \cdots \circ_{j_{m-1}}(v_{i_m}, j_{m})$$
where $v_{i_s}\in E$ and $1\leq j_s\leq \deg(v_{i_s})+1$.
Let $d$ be $\sum_{s=1}^m j_s-m+1$. Note that $\{j_s\}_{s=1}^m$
can be replaced by any $d$-sequence in the above formula.
Let $\sigma$ be any permutation in $\SG_{\Ar(s)}$. Using induction 
on $m$, relations of form \eqref{E7.2.6}, and the fact
$$(v_{i_s}, j)\ast \tau =(v_{i_s}, \tau^{-1}(j)$$
for all $\tau\in \SG_{\Ar(v_{i_s},j)}$ we obtain that
$$s\ast \sigma =(v_{i_1}, j'_1) \circ_{j'_1} (v_{i_2}, j'_2)
\circ_{j'_2} \cdots \circ_{j'_{m-1}}(v_{i_m}, j'_{m})$$
where $d':=\sum_{s=1}^m j'_s-m+1$ is equal to $\sigma^{-1}(d)$. 
(The above equation can also be seen from the tree presentation 
of $s$ and $s\ast \sigma$). Again $\{j'_s\}_{s=1}^m$ can be 
any $d'$-sequence by relations \eqref{E7.2.6}. Hence 
$s\ast \sigma\in {\mathcal F}^{ns}(E)/(R)$ as desired.
\end{proof}

Note that Construction \ref{xxcon7.1} can be viewed as a
symmetric version of Construction \ref{xxcon1.3}. We are
wondering if there is a symmetric version of Construction
\ref{xxcon5.1}. If $A$ is a commutative graded algebra,
there is another construction of a symmetric operad
associated to $A$, see \cite[Section 4.2]{DK10}.

Suppose a symmetric operad $\PP$ is finitely generated by a finite
alphabet $\XX$. Let $\VV=\FF \XX$, and for every $m\geq 0$,
$\VV^{m}$ is defined as in \eqref{E3.1.1}. For any subcollection
$\WW \subseteq \PP$, let $\WW_{\SG}=\{\WW(n)\otimes \SG_n\}_{n\geq 0}$.
Now let $\VSG^{m}$ denote $(\VV^{m})_{\SG}$, so
$\VSG^{m}(n)$ is a right $\SG_n$-module for all $n$. We have a symmetric
version of Lemmas \ref{xxlem3.2} and \ref{xxlem3.3}.

\begin{lemma}
\label{xxlem7.3}
Suppose $\PP$ is a locally finite symmetric operad generated
by a finite dimensional subcollection $\VV$.
\begin{enumerate}
\item[(1)]
Then
\[
\GK(\PP)=\limsup_{n\to\infty}\log_n
\left(\dim\left(\sum_{i=0}^n \VSG^i\right)\right).
\]
\item[(2)]
The reduced symmetric operad associated with $\PP$, defined as
in Lemma \ref{xxlem3.2}(1), is finitely generated and locally
finite.
\item[(3)]
The reduced connected symmetric operad associated with $\PP$,
defined as in Lemma \ref{xxlem3.2}(2) is finitely generated and
locally finite.
\end{enumerate}
\end{lemma}

We have a version of Proposition \ref{xxpro3.8} for
symmetric operads.

\begin{lemma}
\label{xxlem7.4}
Suppose $\PP$ is a finitely generated locally finite symmetric
operad.
\begin{enumerate}
\item[(1)]
$\GK(\PP)=0$ if and only if $\PP$ is finite dimensional.
\item[(2)]
$\GK(\PP)$ cannot be strictly between 0 and 1.
\end{enumerate}
\end{lemma}

Now we are ready to show Theorem \ref{xxthm0.7}.

\begin{proof}[Proof of Theorem \ref{xxthm0.7}]
(1) Let $\PP$ be a finitely generated and locally finite
symmetric operad. It is clear that $\GK(\PP)\geq 0$.
The assertion follows from Lemma \ref{xxlem7.4}(2).

(2) It is well-known that, if $r\in \NN$, then there are
finitely generated and locally finite symmetric operads
$\PP$ such that $\GK(\PP)=r$. Now suppose that
$r$ is not an integer and $r\in R_{\GK} \setminus (2,3)$.
Then $r>3$. By Lemma \ref{xxlem3.6}(3), there is a 
connected graded
algebra $A$ such that $\GK(A)=r-1$ and $f(n):=\dim A_n$ is
increasing. By Construction \ref{xxcon7.1} and
Lemma \ref{xxlem7.2}, there is a locally
finite and finitely generated symmetric operads $\SO_A$
such that $\dim \SO_A(n)=n f(n-1)$ for all $n\in \NN$.
Then, using the fact that $f(n)$ are increasing,
$$\begin{aligned}
\GK(\SO_A)&=\limsup_{n\to\infty}
\log_n (\sum_{i=0}^n\dim \SO_A(i))=\limsup_{n\to\infty}
\log_n (\sum_{i=0}^n i f(i-1))\\
&\leq \limsup_{n\to\infty}
\log_n (n \sum_{i=0}^n f(i-1))
=1+\GK(A),\\
\GK(\SO_A) &=\limsup_{n\to\infty}
\log_n (\sum_{i=0}^n i f(i-1))
\geq \limsup_{n\to\infty}
\log_n (\sum_{i=\lfloor n/2\rfloor+1}^n i f(i-1))\\
&\geq \limsup_{n\to\infty}
\log_n (\frac{n}{2} \sum_{i=\lfloor n/2\rfloor+1}^n f(i-1))
\geq \limsup_{n\to\infty}
\log_n (\frac{n}{2} \sum_{i=1}^{\lfloor n/2\rfloor} f(i-1))\\
&=1+\GK A.
\end{aligned}
$$
Therefore $\GK(\SO_A)=\GK(A)+1=r$.

(3) Let $U$ be the finitely generated graded algebra given in
Example \ref{xxex6.2}. So $\GK(U)=1$. Let $\SO_U$ be the
finitely generated locally finite symmetric operad constructed
in Construction \ref{xxcon7.1} and Lemma \ref{xxlem7.2}.
By the proof of part (2),
$\GK(\SO_U)=2$. In fact, its generating series is
$$G_{\SO_U}(z)=z(zH_U(z))'
=\frac{z(1+2 z+2 z^2-2 z^3)}{(1-z)^2} +\bar{V}(z)$$
where
$$\bar{V}(z)=\sum_{n=2}^{\infty} (n+1)\delta_{\Lambda}(n) z^{n+1}.$$
By Lemma \ref{xxlem6.1}, $\bar{V}(z)$ and hence $G_{\SO_U}(z)$
are not holonomic.

Let $r\geq 2$ be any real number such that there is a finitely
generated locally finite symmetric operad $\PP$ with
$\GK(\PP)=r$. If $G_{\PP}$ is not holonomic, then we are done.
Otherwise, $G_{\PP}$ is holonomic. Now consider a new operad
$\QQ:=\PP\oplus \SO_{U}$ with $G_{\QQ}=G_{\PP}+G_{\SO_U}$. By
\cite[Holonomic Theorem 2]{Ber14}, $G_{\QQ}$ is not holonomic.
Since $\GK(\SO_U)=2$, we have $\GK(\QQ)=\GK(\PP)=r$. The assertion
follows.
\end{proof}

We finish the paper with a potential counterexample to
\cite[Expectation 1]{KP15}.

\begin{example}
\label{xxex7.5}
Let $A$ be the connected graded algebra in Example \ref{xxex6.4}
and let $\SO_A$ be the operad given in Construction \ref{xxcon7.1}.
Then $\SO_A$ is a finitely presented symmetric operad by
Lemma \ref{xxlem7.2}. By
\eqref{E7.1.6}, its generating series is
$$G_{\SO_A}(z)=z(z H_A(z))'=z(zP(z))'$$
which is not holonomic by Property
(P3) in Example \ref{xxex6.4}. Therefore $\SO_A$ is a
``non-generic'' counterexample to the symmetric version of
\cite[Expectation 2]{KP15}.

Since $G_{\SO_A}(z)$ is not holonomic, by \eqref{E6.0.3},
$E_{\SO_A}(z)$ is not holonomic. We conjecture that
$E_{\SO_A}(z)$ is not differential algebraic. If this is the case,
then $\SO_A$ is a
``non-generic'' counterexample to \cite[Expectation 1]{KP15}.
\end{example}

%%%%%%%%%%

\section*{Acknowledgments}
The authors thank the referee for his/her careful reading and
valuable comments. The authors thank Yanhong Bao, Yu Li, Yu Ye, and
Zerui Zhang for reading an earlier version of the paper, thank
Jason Bell for pointing out the fact that the partition
generating function is not holonomic and for providing several
references, and thank Vladimir Dotsenko for email exchanges
and for his interesting remarks on the subject. Z.-H. Qi,
Y.-J. Xu and X.-G. Zhao would like to thank J.J. Zhang
and the department of Mathematics at the University of Washington
for the hospitality during their visits. Z.-H. Qi was partially
supported by National Science Foundation of China (Nos. 11971460, 12071137) and Science and Technology Commission of Shanghai Municipality (No. 18dz2271000). Y.-J. Xu was partially
supported by National Science Foundation of China (Nos. 11501317,
11871301), and China Postdoctoral
Science Foundation (No. 2016M600530).
J.J. Zhang was partially supported by the US National
Science Foundation (Nos. DMS-1700825 and DMS-2001015).
X.-G. Zhao was partially supported by the Natural
Science Foundation of Huizhou University (HZU202001, HZU201804)
and the Characteristic Innovation Project of Guangdong Provincial
Department of Education (2020KTSCX145).

%%%%%%%%%%%%%%%%%%%%%%%
\bibliographystyle{amsalpha}%{amsplain}%
%\bibliography{XZ}

\end{document}